\newcommand{\pD}[1]{\partial_#1}
\newcommand{\npD}[1]{\partial^\perp_#1}
\newcommand{\rD}[2]{\frac{d #1}{d #2}}
\newcommand{\vn}[1]{\lVert#1\rVert}
\newcommand{\sfrac}[2]{\text{\fontsize{5}{5}\selectfont$\frac{#1}{#2}$}}
\newcommand{\IP}[2]{\left< #1 , #2 \right>}
\newcommand{\R}{\ensuremath{\mathbb{R}}}
\newcommand{\N}{\ensuremath{\mathbb{N}}}
\newcommand{\BF}{\ensuremath{\mathbf{F}}}
\newcommand{\BQ}{\ensuremath{\mathbf{Q}}}
\newcommand{\SL}{\ensuremath{\mathcal{L{}}}}
\newcommand{\cg}{\ensuremath{c_{\gamma}}}
\newtheorem{thm}{Theorem}
\newtheorem{cor}[thm]{Corollary}
\newtheorem{prop}[thm]{Proposition}
\newtheorem{lem}[thm]{Lemma}
\theoremstyle{remark}
\newtheorem{rmk}{Remark}
\begin{document}

\title{Concentration-compactness and finite-time singularities for Chen's flow}
\author{Yann Bernard
   \and Glen Wheeler
   \and Valentina-Mira Wheeler$^*$
   }
\thanks{* Corresponding author, \texttt{vwheeler@uow.edu.au}}
\address{Yann Bernard \\
         School of Mathematical Sciences\\
         Level 4 Rainforest Walk Clayton Campus\\
         Monash University\\
         Victoria 3800\\
         Australia\\~\\
Glen Wheeler \& Valentina-Mira Wheeler\\Institute for Mathematics and its Applications\\
         Faculty of Informatics and Engineering\\
         University of Wollongong\\
         Northfields Avenue, 2522\\
         Wollongong, NSW, Australia}

\maketitle

\begin{abstract}
Chen's flow is a fourth-order curvature flow motivated by the spectral
decomposition of immersions, a program classically pushed by B.-Y. Chen since
the 1970s. In curvature flow terms the flow sits at the critical level of scaling
together with the most popular extrinsic fourth-order curvature flow, the Willmore and
surface diffusion flows. Unlike them however the famous Chen conjecture
indicates that there should be no stationary nonminimal data, and so in
particular the flow should drive all closed submanifolds to singularities. We
investigate this idea, proving that (1) closed data becomes extinct in finite
time in all dimensions and for any codimension; (2) singularities are
characterised by concentration of curvature in $L^n$ for intrinsic dimension $n
\in \{2,4\}$ and any codimension (a Lifespan Theorem); and (3) for $n=2$ and in
any codimension, there exists an explicit $\varepsilon_2$ such that if the
$L^2$ norm of the tracefree curvature is initially smaller than
$\varepsilon_2$, the flow remains smooth until it shrinks to a point, and that
the blowup of that point is an embedded smooth round sphere.

\keywords{global differential geometry\and fourth order\and geometric analysis\and biharmonic\and Chen conjecture}
\subjclass{53C44\and 58J35}
\end{abstract}

\section{Introduction}

Suppose $f:M^n\rightarrow \R^{N}$, $N>n$ is a smooth isometric immersion.
We assume that $M^n$ is closed and complete.
Denote by $\vec{H}$ the mean curvature vector of $f$.
Then
\[
(\Delta f)(p) = \vec{H}(p)
\]
for all $p\in M^n$, where $\Delta$ here refers to the rough Laplacian.
The rough Laplacian is that induced by the connection on the pullback bundle $f^*(T\R^{n+1})$.
Applying the operator again yields
\[
(\Delta^2 f)(p) = (\Delta \vec{H})(p)
\,.
\]
If $\Delta^2f\equiv0$, we call $f$ \emph{biharmonic}.
Chen's conjecture is the statement that $\Delta \vec{H} \equiv 0$ implies
$\vec{H} \equiv 0$.
This conjecture is motivated by Chen's work in the spectral decomposition of
immersed submanifolds.
There has been much activity on the conjecture (see as a sample the recent
papers \cite{b08,d06,luo14,m14,mon06,n14,ou10,ou16,ou12,w14,whe13}
and Chen's recent survey \cite{chen13}), but still it remains open.

In this paper we study the heat flow for $\Delta^2$:
this is a one-parameter family of smooth isometric immersions
$f:M^n\times[0,T)\rightarrow\R^{N}$ satisfying $f(p,0) = f_0(p)$ for a given
smooth isometric immersion $f_0:M^n\rightarrow\R^{N}$ and
\begin{equation}
\label{CF}
\tag{CF}
(\partial_tf)(p,t) = -(\Delta^2 f)(p,t)
\,,
\end{equation}
for all $(p,t) \in M^n\times(0,T)$.
We call \eqref{CF} \emph{Chen's flow} and $f_0$ the initial data.
Since $\Delta^2$ is a fourth-order quasilinear elliptic operator, local
existence and uniqueness for \eqref{CF} is standard.  Details can be
found in \cite[Chapter 3]{bakerthesis}.  See also \cite[Chapter
5]{eidelman1998pbv}, \cite{solonnikov1965bvp} and \cite{shuanhuli}.

\begin{thm}
\label{TMste}
Let $f_0:M^n\rightarrow\R^{N}$ be a smooth closed isometrically immersed submanifold.
There exists a $T\in(0,\infty]$ and unique one-parameter family of smooth
closed isometric immersions $f:M^n\times[0,T)\rightarrow\R^{N}$ such that
\eqref{CF} is satisfied and $T$ is maximal.
\end{thm}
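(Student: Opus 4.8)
The plan is to recognise \eqref{CF} as a quasilinear, strictly parabolic system of fourth order and to invoke the standard existence theory for such systems. Writing $g_{ij} = \langle \partial_i f, \partial_j f\rangle$ for the metric induced in a local chart on $M^n$ and expanding in coordinates,
\[
\Delta^2 f = g^{ij}g^{kl}\,\partial_i\partial_j\partial_k\partial_l f + B\big(Df, D^2 f, D^3 f\big),
\]
where the remainder $B$ is a universal polynomial in $g^{-1}$, $Dg$, $D^2g$ and in $Df, D^2f, D^3f$, smooth in its arguments on the open set where $g$ is positive definite. Hence \eqref{CF} has principal part $-g^{ij}g^{kl}\partial_i\partial_j\partial_k\partial_l f$, acting on $\R^N$-valued maps as the scalar operator $-\Delta^2$ tensored with $\mathrm{Id}$, whose symbol is $-|\xi|_g^4\,\mathrm{Id}$, negative definite for $\xi\ne 0$. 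Thus the system is strictly parabolic in the sense of Petrowsky, uniformly so on short time intervals as long as the immersion property persists. Note that, unlike the second-order mean curvature flow, diffeomorphism invariance forces no degeneracy here: the full vector $\Delta^2 f$ is retained, so no DeTurck-type modification of the principal symbol is needed.

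For existence and uniqueness I would fix $\alpha\in(0,1)$ and work in the parabolic Hölder spaces $C^{4+\alpha,\,1+\alpha/4}(M^n\times[0,\tau])$, which are well defined via a finite atlas since $M^n$ is closed. Linearising \eqref{CF} at $f_0$ gives a linear, uniformly parabolic, fourth-order system with smooth coefficients, for which solvability with initial datum $f_0$ and the accompanying Schauder estimate are classical; see \cite{eidelman1998pbv, solonnikov1965bvp}. I would then define the map $\Phi$ sending $\tilde f$ (close to $f_0$ in the above norm) to the solution of the linear problem obtained by freezing the induced metric and all lower-order coefficients at $\tilde f$, and show that on a sufficiently short interval $[0,\tau]$ and a small closed ball about $f_0$ the map $\Phi$ carries the ball into itself and is a contraction; its fixed point solves \eqref{CF}. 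Uniqueness within this class follows from the contraction property, or directly from an energy estimate on the difference of two solutions. Equivalently, one can run an inverse-function-theorem argument in parabolic spaces, exactly as carried out in \cite[Chapter 3]{bakerthesis}.

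It remains to upgrade regularity and to define $T$. Differentiating \eqref{CF} in space, $Df$ solves a linear parabolic system whose coefficients now lie in $C^{\alpha,\alpha/4}$, so interior parabolic Schauder estimates upgrade $f$ to $C^{5+\alpha,\,(5+\alpha)/4}$, and iterating yields $f\in C^\infty$ on $M^n\times(0,\tau]$; since $f_0$ is smooth the regularity extends up to $t=0$. Setting
\[
T := \sup\{\,\tau>0 \ :\ \text{a smooth solution of the flow with } f(\cdot,0)=f_0 \text{ exists on } M^n\times[0,\tau)\,\},
\]
short-time existence shows this set is nonempty, so $T>0$, while uniqueness guarantees that the solutions for $\tau<T$ agree on overlaps and patch into a single smooth solution on $M^n\times[0,T)$, maximal by construction, with $T\in(0,\infty]$. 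The only delicate point — the ``main obstacle'', such as it is — is the bookkeeping behind the quasilinear structure: confirming that the top-order coefficient depends only on $Df$, that the full nonlinearity is a smooth function of derivatives up to order three, and that strict parabolicity is preserved under the freezing procedure over the short time interval, so that every linear problem in the iteration is genuinely and uniformly parabolic. This is routine; the substantive analytic work of the paper is deferred to the later sections.
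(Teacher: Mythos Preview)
Your proposal is correct and matches the paper's treatment: the paper does not give a proof of Theorem~\ref{TMste} at all but simply remarks that local existence and uniqueness are standard for the fourth-order quasilinear parabolic system \eqref{CF}, referring the reader to \cite[Chapter~3]{bakerthesis}, \cite[Chapter~5]{eidelman1998pbv}, \cite{solonnikov1965bvp} and \cite{shuanhuli}. Your sketch is precisely an expansion of that standard argument, with the same references, and your observation that no DeTurck trick is needed (because the flow retains the full vector $\Delta^2 f$ rather than only its normal part) correctly identifies why the symbol is nondegenerate.
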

Note that \emph{maximal} above means that there does not exist another family
$\hat{f}:M^n\times[0,\hat{T})\rightarrow\R^{N}$ of smooth closed
isometrically immersed hypersurfaces satisfying \eqref{CF}, $\hat{f}(p,0) =
f_0(p)$ with $\hat{T} > T$.

A simple consequence of the argument used by Jiang \cite{jiang} is that there
are no closed biharmonic submanifolds of Euclidean space.
Therefore it is natural to expect that the flow may only exist for at most finite time, that is, $T<\infty$.
The following result gives a precise estimate, sharp for $n\in\{2,3,4\}$.

\begin{thm}
\label{TMfinite}
Chen's flow $f:M^n\times[0,T)\rightarrow\R^{N}$ with smooth, closed initial
data $f_0:M^n\rightarrow\R^{N}$ has finite maximal time of existence, with
the explicit estimate
\begin{equation}
\label{EQmaxTest}
T \le \frac{\mu(f_0)^\frac{4}{n}}{C_n}\,,
\end{equation}
where for $n\in\{2,3,4\}$ we have $C_n = 4\omega_n^{\frac4n} n^2$, and for $n
>4$ we have $C_n =  \frac{\omega_n^{\frac4n}}{n^24^{4n+3}}$. Here
$\omega_n$ denotes the area of the unit $n$-sphere.
Furthermore, if equality is achieved in \eqref{EQmaxTest}, then
$\mu(f_t)\searrow0$ as $t\nearrow T$.
\end{thm}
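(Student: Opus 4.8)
The plan is to track the $n$-volume $\mu(f_t)$ of the evolving immersion and show it obeys a differential inequality that forces extinction. First I would record the first variation of area along \eqref{CF}: since $\partial_t f=-\Delta^2 f=-\Delta\vec H$ and $\Delta f=\vec H$, the first variation of area together with an integration by parts on the closed manifold $M$ gives
\begin{equation*}
\frac{d}{dt}\,\mu(f_t)=-\int_M\langle\vec H,\partial_t f\rangle\,d\mu=\int_M\langle\vec H,\Delta\vec H\rangle\,d\mu=-\int_M|\nabla\vec H|^2\,d\mu\le 0,
\end{equation*}
where $\nabla$ denotes the connection on $f^*T\R^N$; in particular $\mu(f_t)$ is non-increasing. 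Everything that follows amounts to bounding the dissipation $\int_M|\nabla\vec H|^2\,d\mu$ below by a power of $\mu(f_t)$.

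Second, I would establish the pointwise estimate $|\nabla\vec H|^2\ge\frac1n|\vec H|^4$, valid in every dimension and codimension. Splitting $\nabla_{e_i}\vec H$ into its normal part $\nabla^\perp_{e_i}\vec H$ and its tangential part $-\sum_j\langle A_{ij},\vec H\rangle e_j$ gives $|\nabla\vec H|^2=|\nabla^\perp\vec H|^2+\sum_{i,j}\langle A_{ij},\vec H\rangle^2\ge\sum_{i,j}\langle A_{ij},\vec H\rangle^2$, and Cauchy--Schwarz applied to the symmetric bilinear form $(X,Y)\mapsto\langle A(X,Y),\vec H\rangle$, whose $g$-trace is $|\vec H|^2$, gives $\sum_{i,j}\langle A_{ij},\vec H\rangle^2\ge\frac1n|\vec H|^4$. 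Hence $\frac{d}{dt}\mu(f_t)\le-\frac1n\int_M|\vec H|^4\,d\mu$. This is an equality on round spheres, where $\nabla^\perp\vec H=0$ and $\langle A,\vec H\rangle$ is pure trace; this is what will make the resulting estimate sharp for $n\le 4$.

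Third, I would bound $\int_M|\vec H|^4\,d\mu$ below by a multiple of $\mu(f_t)^{1-\frac4n}$. For $n\in\{2,3,4\}$ we have $n\le 4$, so H\"older's inequality gives $\bigl(\int_M|\vec H|^n\,d\mu\bigr)^{4/n}\le\bigl(\int_M|\vec H|^4\,d\mu\bigr)\,\mu(f_t)^{\frac4n-1}$; combining this with the classical inequality $\int_M|\vec H|^n\,d\mu\ge n^n\omega_n$, valid for every closed immersed $n$-submanifold of $\R^N$ (for $n=2$ this is Willmore's inequality), yields $\frac{d}{dt}\mu(f_t)\le-n^3\omega_n^{4/n}\,\mu(f_t)^{1-\frac4n}$. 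For $n>4$ this application of H\"older runs the wrong way, so instead I would put the test function $h\equiv 1$ into a Michael--Simon Sobolev inequality to obtain $\int_M|\vec H|\,d\mu\ge c(n)\,\mu(f_t)^{\frac{n-1}{n}}$ with $c(n)$ explicit, and then use $\int_M|\vec H|\,d\mu\le\bigl(\int_M|\vec H|^4\,d\mu\bigr)^{1/4}\mu(f_t)^{3/4}$ to deduce $\int_M|\vec H|^4\,d\mu\ge c(n)^4\mu(f_t)^{1-\frac4n}$; carrying the explicit Michael--Simon constant through the computation is what produces the factor $4^{4n+3}$.

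Finally, with $v(t):=\mu(f_t)^{4/n}$, all of the above reads $v'(t)\le-C_n$, so $v(t)\le\mu(f_0)^{4/n}-C_n t$ on $[0,T)$; positivity of $v$ there forces $T\le\mu(f_0)^{4/n}/C_n$, which is \eqref{EQmaxTest}. If equality holds, then $v(t)\le\mu(f_0)^{4/n}-C_n t\to 0$ as $t\nearrow T$, and since $\mu(f_t)$ is non-increasing we conclude $\mu(f_t)\searrow 0$. I expect the third step to be the only genuine obstacle: for $n\le 4$ one must invoke -- and, in arbitrary codimension, cite or reprove -- the sharp lower bound $\int_M|\vec H|^n\,d\mu\ge n^n\omega_n$ and check that the whole chain of inequalities is saturated simultaneously by round spheres; for $n>4$ the only difficulty is bookkeeping a fully explicit constant in a Michael--Simon inequality. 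Steps one, two and four are routine.
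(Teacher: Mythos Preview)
Your proposal is correct and follows essentially the same route as the paper. The only cosmetic difference is that the paper works in the normal-flow formulation $\partial_t f=-\Delta^\perp\vec H+Q(A)\vec H$ and therefore arrives at $\frac{d}{dt}\mu\le -\int|\nabla^\perp\vec H|^2-\frac1n\int|\vec H|^4$ by using $\langle Q(A)\vec H,\vec H\rangle\ge\frac1n|\vec H|^4$, whereas you keep the full rough Laplacian and package the same two pieces as the single pointwise inequality $|\nabla\vec H|^2=|\nabla^\perp\vec H|^2+\sum_{i,j}\langle A_{ij},\vec H\rangle^2\ge\frac1n|\vec H|^4$; after that the Chen/H\"older step for $n\le4$, the Michael--Simon step for $n>4$, and the integration of $(\mu^{4/n})'\le -C_n$ are identical to the paper (which cites Chen \cite{chen1971total} for $\int|\vec H|^n\ge n^n\omega_n$ and uses the Michael--Simon constant $4^{n+1}/\omega_n^{1/n}$ for the higher-dimensional case).
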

\begin{rmk}
Round spheres are driven to points under Chen's flow with
$T=\frac{r_0^4}{4n^2}$, where $r_0$ is the initial radius.
This shows that the estimate \eqref{EQmaxTest} is sharp in dimensions $2, 3$
and $4$.
We expect that the same estimate holds in higher dimensions.
\end{rmk}
Given Theorem \ref{TMfinite}, it is natural to ask for a classification of
finite-time singularities.
For higher-order curvature flow such as Chen's flow, such classifications are
very difficult.
For example, a classification of singular geometries remains well open for the
two most popular extrinsic fourth-order curvature flow, that is, the Willmore
flow and the surface diffusion flow (see for example
\cite{kuwert2001wfs,kuwert2002gfw,kuwert2004rps,MWW10,metzger2013willmore,W11,mySDLTE,mythesis,wheeler2015gap}).

For both the surface diffusion and Willmore flows, the general principle of concentration or
compactness from the classical theory of harmonic map heat flow remains valid.
We are able to obtain a similar result here: We present the following
characterisation of finite-time singularities, also called a
concentration-compactness alternative or lifespan theorem.

\begin{thm}
\label{TMlifespan}
Let $n\in\{2,4\}$.
There exist constants $\varepsilon_1>0$ and $c<\infty$ depending only on $n$ and $N$
with the following property.
Let $f:M^n \times[0,T)\rightarrow \R^{N}$ be a Chen flow with smooth initial
data.

({\bf Case 1: $\mathbf{n=2}$.})
Let $\rho$ be chosen such that
\begin{equation}
\label{EQsmallconcentrationcondition}
\int_{f^{-1}(B_\rho(x))} |A|^2 d\mu\Big|_{t=0} = \varepsilon(x) \le \varepsilon_1
\qquad
\text{ for all $x\in \R^{N}$}\,.
\end{equation}
Then the maximal time $T$ of smooth existence satisfies
\begin{equation*}
T \ge \frac1{c}\rho^4\,,
\end{equation*}
and we have the estimate
\begin{equation*}
\int_{f^{-1}(B_\rho(x))} |A|^2 d\mu \le c\varepsilon_1
\qquad\qquad\qquad\hskip-1mm\text{ for all }
t \in \Big[0, \frac1{c}\rho^4\Big].
\end{equation*}

({\bf Case 2: $\mathbf{n=4}$.})
Let $\rho$ be chosen such that
\begin{equation}
\label{EQsmallconcentrationcondition4}
\int_{f^{-1}(B_\rho(x))} |A|^4+|\nabla A|^2 d\mu\Big|_{t=0} = \varepsilon(x) \le \varepsilon_1
\qquad
\text{ for all $x\in \R^{N}$}\,.
\end{equation}
Then the maximal time $T$ of smooth existence satisfies
\begin{equation*}
T \ge \frac1{c}\rho^4\,,
\end{equation*}
and we have the estimate
\begin{equation}
\int_{f^{-1}(B_\rho(x))} |A|^4+|\nabla A|^2 d\mu \le c\varepsilon_1
\qquad\qquad\qquad\hskip-1mm\text{ for all }
t \in \Big[0, \frac1{c}\rho^4\Big].
\label{eqgoalestn4}
\end{equation}
\end{thm}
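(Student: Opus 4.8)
The plan is to follow the now-standard route for fourth-order geometric flows (as pioneered by Kuwert--Schätzle for the Willmore flow and adapted to the surface diffusion flow in the references cited above), adjusting only the evolution equation bookkeeping to account for the fact that Chen's flow is the $L^2$-gradient-type flow $\partial_t f = -\Delta^2 f$ rather than the Willmore flow. First I would record the evolution equations under \eqref{CF} for the induced metric, the second fundamental form $A$, and the measure $d\mu$; schematically $\partial_t A = -\Delta^2 A + (\text{lower-order curvature terms})$, where by standard notation the lower-order terms are polynomial contractions of $A$, $\nabla A$, $\nabla^2 A$ with enough derivatives/powers to be scaling-subcritical. From these one derives the localized energy inequality: multiply the evolution equation for the relevant curvature energy density ($|A|^2$ when $n=2$, $|A|^4 + |\nabla A|^2$ when $n=4$) by $\gamma^s$ for a fixed cutoff $\gamma = \tilde\gamma \circ f$ with $\tilde\gamma \in C^\infty_c(B_{2\rho}(x))$, $\tilde\gamma \equiv 1$ on $B_\rho(x)$, $|\nabla^k\tilde\gamma| \le c\rho^{-k}$, integrate by parts, and absorb the top-order terms. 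This produces an inequality of the form
\[
\frac{d}{dt}\int_M \psi\, \gamma^s\, d\mu + \int_M |\nabla^{(\text{top})}\psi|^2\gamma^s\, d\mu
\le c\,\rho^{-4}\int_{[\gamma>0]} \psi\, d\mu + (\text{interpolable lower-order terms}),
\]
where $\psi$ denotes the curvature energy density in the respective case.

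The crux is then the interpolation step: every lower-order term on the right must be controlled by a small constant times the top-order term plus $c\rho^{-4}$ times the total energy on the support of $\gamma$. This is where the smallness hypothesis \eqref{EQsmallconcentrationcondition} (resp.\ \eqref{EQsmallconcentrationcondition4}) enters decisively, via a Michael--Simon--type Sobolev inequality on the submanifold together with multiplicative interpolation inequalities for tensors (of Gagliardo--Nirenberg type on manifolds). In dimension $n=2$ the relevant inequality is the Kuwert--Schätzle estimate bounding $\int \gamma^s|A|^2|\nabla A|^2$ and similar cubic-in-curvature terms by $\big(\int_{[\gamma>0]}|A|^2\big)\big(\int\gamma^s|\nabla^2 A|^2 + \rho^{-4}\int_{[\gamma>0]}|A|^2\big)$; when the bracketed $L^2$-curvature factor is $\le c\varepsilon_1$ with $\varepsilon_1$ small, the bad term is absorbed on the left. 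For $n=4$ the energy is the scale-invariant quantity $\int |A|^4 + |\nabla A|^2$ and one needs the analogous fourth-order interpolation inequalities (here $|\nabla A|^2$ scales like $|A|^4$, so both must be carried together); the structure is identical but the combinatorics of terms is heavier. I expect \emph{this interpolation/absorption step to be the main obstacle}: one must verify that, after all integrations by parts, no term survives that is both top-order and not multiplied by the small energy factor, and one must track the precise power $s$ of the cutoff needed so that the $\nabla\gamma$ terms land with the right power of $\rho$.

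Having established the differential inequality, I would close the argument in three short moves. (i) A continuity/bootstrap argument: let $t^*$ be the maximal time on $[0,T)$ for which $\sup_x \int_{f^{-1}(B_\rho(x))}\psi\,d\mu \le C\varepsilon_1$ for a suitable $C = C(n,N)$; on $[0,t^*]$ the differential inequality is valid with the absorption, so Gronwall (using $\varepsilon(x)\le\varepsilon_1$ at $t=0$ and the $\rho^{-4}$ prefactor) yields $\int_{f^{-1}(B_\rho(x))}\psi\,d\mu \le c\varepsilon_1$ on $[0, \tfrac1c\rho^4]\cap[0,t^*]$ with $c$ strictly smaller than $C$, which forces $t^* \ge \min\{T, \tfrac1c\rho^4\}$. (ii) If $T < \tfrac1c\rho^4$, then on all of $[0,T)$ we have uniform smallness of the localized curvature energy; feeding this back into the localized higher-order energy estimates (differentiating the flow and iterating the same interpolation scheme to bound $\int\gamma^s|\nabla^m A|^2$ for all $m$) gives uniform bounds on all derivatives of $A$ up to time $T$, contradicting maximality of $T$ via Theorem \ref{TMste}. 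Hence $T \ge \tfrac1c\rho^4$, and the claimed energy bound on $[0,\tfrac1c\rho^4]$ follows from (i). The higher-derivative bootstrap in (ii) and the scale-invariant formulation for $n=4$ are the only places requiring genuine care; everything else is routine once the localized energy inequality is in hand.
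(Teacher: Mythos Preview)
Your proposal is correct and follows essentially the same route as the paper: localized energy inequalities for the scale-invariant quantity ($|A|^2$ for $n=2$, $|A|^4+|\nabla A|^2$ for $n=4$) via evolution equations and integration by parts, absorption of the lower-order terms through Michael--Simon-based multiplicative Sobolev inequalities under the smallness hypothesis, a continuity argument on the maximal time where local curvature stays below a fixed threshold, and a higher-order bootstrap to contradict finite maximal time. Two small remarks: after absorption the right-hand side of the key differential inequality is simply $c\rho^{-4}\varepsilon_1$ (a constant, not involving the energy), so one integrates directly rather than invoking Gronwall at that stage---Gronwall enters only later in the higher-derivative step; and the paper closes the sup over $x$ by a covering argument passing from $B_{\rho/2}$ to $B_\rho$, which you should make explicit.
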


\begin{rmk}
Our proof applies to a general class of flows, including the Willmore flow and
the surface diffusion flow.  This is new for the Willmore flow and the surface diffusion flow in
four dimensions (the two dimensional case for the Willmore flow is the main result of
\cite{kuwert2002gfw}, and a corresponding theorem surface diffusion flow is contained in \cite{mySDLTE}).
In three dimensions a lifespan theorem for the surface diffusion flow is known
\cite{mySDLTE}, however the constants $(\varepsilon_1,c)$ there for $n=3$
depend on the measure of the initial data.  Here, new estimates enable our constants to be universal.

The main result of \cite{kuwert2002gfw} and the lifespan theorems from
\cite{MWW10,MW16,W11,mySDLTE,mythesis} (assuming the external force vanishes
identically) are generalised by our work here.
See Theorem \ref{TMgenerallifespan} for a precise statement.
\end{rmk}

The concentration phenomenon that Theorem \ref{TMlifespan} guarantees can be seen as follows.
If $\rho(t)$ denotes the largest radius such that
either of the concentration conditions (\eqref{EQsmallconcentrationcondition}
or \eqref{EQsmallconcentrationcondition4}) holds at time $t$, then $\rho(t) \le
\sqrt[4]{c(T-t)}$ and so at least $\varepsilon_1$ of the curvature (or its
derivative if $n=4$) concentrates in a ball $f^{-1}(B_{\rho(T)}(x))$.  That is,
\[
	\begin{cases}\displaystyle
		(n=2)\qquad\lim_{t\rightarrow T} \int_{f^{-1}(B_{\rho(t)}(x))} |A|^2 d\mu \ge \varepsilon_1\,,\\\displaystyle
		(n=4)\qquad\lim_{t\rightarrow T} \int_{f^{-1}(B_{\rho(t)}(x))} |A|^4+|\nabla A|^2 d\mu \ge \varepsilon_1\,,
	\end{cases}
\]
where $x = x(t)$ is understood to be the centre of a ball where the integral
above is maximised.
In either case, this implies that a blowup of such a singularity will be nontrivial.

Although Theorem \ref{TMlifespan} yields a characterisation of finite time
singularities as space-time concentrations of curvature, it
does not give any information at all about the asymptotic geometry of such a
singularity.
One of the simplest observations in this direction is that for spherical
initial data with radius $r_0$, the flow shrinks homothetically 
to a point with maximal time
\[
T = \frac{r_0^4}{4n^2}\,.
\]
As the evolution is homothetic, parabolic rescaling about the space-time
singularity reveals a standard round sphere.
This asymptotic behaviour is called \emph{shrinking to a round point}.

One may therefore hope that this behaviour holds in a neighbourhood of a sphere.
This is our final result of the paper, proved using blowup analysis.

\begin{thm}
\label{TMglobal}
There exists an absolute constant $\varepsilon_2>0$ depending only on $N$ such
that if $f:M^2\times[0,T)\rightarrow\R^N$ is Chen's flow satisfying
\begin{equation}
\int_M|A^o|^2d\mu\bigg|_{t=0} \le \varepsilon_2 < 8\pi
\label{EQsmallnessTMglobal}
\end{equation}
then $T<\infty$, and $f(M^2,t)$ shrinks to a round point as $t\rightarrow T$.
\end{thm}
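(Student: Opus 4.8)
The plan is to combine the Lifespan Theorem (Theorem~\ref{TMlifespan}, Case~1) with a Gauss--Bonnet/Willmore-type energy argument to rule out concentration entirely, then perform a blowup to identify the limit. First I would establish that the smallness \eqref{EQsmallnessTMglobal} with $\varepsilon_2 < 8\pi$ forces $M^2$ to be a topological sphere: the Gauss--Bonnet theorem gives $\int_M |A|^2\,d\mu = \int_M |A^o|^2\,d\mu + \frac1n\int_M |H|^2\,d\mu$ and a standard Willmore-type lower bound $\int_M |H|^2\,d\mu \ge 16\pi$ for any closed immersed surface with a component of nonnegative Euler characteristic (and more for higher genus), so that $\chi(M) > 0$, i.e. $M \cong S^2$. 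Next I would derive the key monotonicity: along \eqref{CF} the tracefree energy $\int_M |A^o|^2\,d\mu$ should satisfy a differential inequality of the form $\frac{d}{dt}\int_M |A^o|^2\,d\mu \le -\int_M |\nabla^2 A^o|^2\,d\mu + c\big(\int_M |A^o|^2\,d\mu\big)\int_M |\nabla^2 A^o|^2\,d\mu + (\text{absorbable lower-order terms})$, so that as long as $\int_M |A^o|^2\,d\mu \le \varepsilon_2$ with $\varepsilon_2 c < 1$, the energy is monotone nonincreasing; hence the smallness \eqref{EQsmallnessTMglobal} is preserved for all $t \in [0,T)$.

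With smallness preserved, I would run the Lifespan Theorem. Since $\int_M |A^o|^2\,d\mu \le \varepsilon_2$ for all time, for any $x$ and any radius $\rho$ we have $\int_{f^{-1}(B_\rho(x))} |A|^2\,d\mu \le \varepsilon_2 + \frac1n\int_{f^{-1}(B_\rho(x))} |H|^2\,d\mu$, and the latter term can be made $\le \varepsilon_1/2$ by choosing $\rho$ small (using that $\int_M |H|^2\,d\mu < \infty$ and absolute continuity of the integral, uniformly in $t$ via the energy bound). Choosing $\varepsilon_2 \le \varepsilon_1/2$ as well, the concentration hypothesis \eqref{EQsmallconcentrationcondition} holds at every time with a uniform $\rho$, so Theorem~\ref{TMlifespan} gives $T - t \ge \frac1c\rho^4$ for every $t < T$ with the \emph{same} $\rho$ — a contradiction with $T < \infty$ unless the curvature stays bounded up to $T$. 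Combined with Theorem~\ref{TMfinite} (which already gives $T < \infty$ and $\mu(f_t) \to 0$ is not yet known, but finiteness is), this forces the flow to remain smooth with uniformly controlled curvature concentration on $[0,T)$, so that by interpolation and the standard bootstrap all derivatives of $A$ are bounded on compact subsets of space-time away from the final time, and the only obstruction to extending past $T$ is the global shrinking $\mu(f_t) \to 0$.

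Finally I would perform the blowup at the singular time. Rescale $f$ parabolically about the point to which $M^2$ shrinks: set $f_j(p,s) = \lambda_j\big(f(p, t_j + \lambda_j^{-4}s) - x_0\big)$ for $\lambda_j \to \infty$ chosen so that the rescaled surfaces have bounded, nondegenerate area on a fixed time interval. The preserved smallness $\int |A^o|^2\,d\mu \le \varepsilon_2$ is scale-invariant, so it passes to the limit $f_\infty$; the curvature estimates from the lifespan argument give $C^\infty_{loc}$ subconvergence to a nonempty smooth limit flow $f_\infty$ which is a Chen flow (or, after the time-rescaling normalisation, an ancient solution / self-shrinker-type object) with $\int |A^o_\infty|^2 = 0$, i.e. $f_\infty$ is totally umbilic at each time, hence a shrinking round sphere; embeddedness follows since the limit is a round sphere and Gauss--Bonnet already pinned $M \cong S^2$, together with a Li--Yau-type argument showing multiplicity one from $\int |H|^2 < 16\pi$ being impossible only if... — more carefully, the area ratios and the energy gap force multiplicity one. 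This identifies the blowup as an embedded smooth round sphere, i.e. shrinking to a round point.

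The main obstacle I expect is establishing the \emph{preserved} smallness of $\int_M |A^o|^2\,d\mu$ along \eqref{CF}: the evolution equation for $|A^o|^2$ under a fourth-order flow produces many cubic and quartic curvature terms (and terms with up to three derivatives landing on $A$), and closing the differential inequality so that the $\varepsilon_2$-smallness is genuinely monotone — rather than merely almost-monotone with an error — requires delicate use of the Michael--Simon--Sobolev inequality and careful constant tracking to ensure the threshold $\varepsilon_2$ is absolute (depending only on $N$) and strictly below $8\pi$. A secondary difficulty is the multiplicity-one statement in the blowup, which needs the Gauss--Bonnet topological input together with a monotonicity formula adapted to Chen's flow.
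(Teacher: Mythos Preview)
Your argument has a genuine gap in the step where you try to rule out concentration using the preserved smallness of $\int_M |A^o|^2\,d\mu$. You claim that $\int_{f^{-1}(B_\rho(x))}|H|^2\,d\mu$ can be made $\le \varepsilon_1/2$ by choosing $\rho$ small \emph{uniformly in $t$}, appealing to absolute continuity and the energy bound. This is false: boundedness of $\int_M |H|^2\,d\mu$ does not give uniform-in-time absolute continuity. For a round sphere of radius $r(t)\searrow 0$ one has $\int_M|H|^2\,d\mu = 16\pi$ for all $t$, yet once $r(t)<\rho$ the entire integral concentrates inside $f^{-1}(B_\rho(x))$. So a uniform $\rho$ cannot exist; concentration \emph{does} occur---the whole surface contracts to a point. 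Your subsequent ``contradiction with $T<\infty$ unless curvature stays bounded'' is therefore not a valid alternative: the contradiction simply shows the uniform-$\rho$ claim is wrong.

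The paper's approach does not try to prevent concentration. Instead it uses the Lifespan Theorem to \emph{locate} the concentration scales $r_j\searrow 0$ and times $t_j\nearrow T$, rescales by $r_j^{-1}$, and extracts a limit Chen flow $\hat f$ via compactness. The monotonicity of $\int_M|A^o|^2\,d\mu$ is then used not to control concentration but to show that the rescaled defect vanishes: since $\int_M|A^o|^2\,d\mu$ is monotone and bounded, $\int_M|A^o(t_j)|^2\,d\mu - \int_M|A^o(t_j+r_j^4c_0)|^2\,d\mu \to 0$, and this difference dominates $\int_0^{c_0}\int|\BQ(f_j)|^2\,d\mu\,dt$ on the rescaled flow. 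Hence $\BQ\equiv 0$ on $\hat f$, forcing $|\Delta^\perp\vec H|^2+|\nabla A^o|^2|\vec H|^2+|\vec H|^4|A^o|^2\equiv 0$, so $\hat f$ is a union of planes and round spheres; the nontrivial-curvature condition from the concentration and a connectedness lemma then pin it to a single round sphere. Your blowup paragraph also misses this mechanism: you assert $\int|A^o_\infty|^2=0$ because ``smallness passes to the limit'', but scale-invariant smallness only gives $\le\varepsilon_2$, not zero---it is the \emph{decay to a limit} of the monotone quantity, not its smallness, that forces the defect to vanish.
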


This paper is organised as follows.
In Section 2 we describe our notation, some fundamental identities, and the
Chen flow in the normal bundle.
Section 3 gives evolution equations and the proof of Theorem \ref{TMfinite}.
Our analysis throughout the paper relies on control obtained via localised integral estimates.
The key tools that facilitate this are the Michael-Simon Sobolev inequality
\cite{michael1973sam} and the divergence theorem.
Section 4 contains the consequences of these that we need here, and proofs of all new statements.
One especially long proof is delayed to the Appendix.
Integral estimates valid along the flow are also proved in Section 4, including
control on the local growth of the $L^n$ norm of $A$.
The section is concluded with a proof of the lifespan theorem.
Section 5 is concerned with global analysis for the flow, and contains a proof
of the monotonicity result for the $L^2$ norm of $A$, and blowup analysis,
ending in the proof of Theorem \ref{TMglobal}.


\section{Notation and the normal flow}

Let us first collect various general formulae from the differential geometry of submanifolds which we need for later analysis.
We use notation similar to that of Kuwert-Sch\"atzle
\cite{kuwert2001wfs,kuwert2002gfw,kuwert2004rps}, Hamilton \cite{RH} and
Huisken \cite{huisken1984fmc,huisken86riemannian}.
We have as our principal object of study a smooth isometric immersion
$f:M^n\rightarrow\R^{N}$ of a Riemannian manifold $(M^n,g) =
(M^n,f^*\delta^{\R^{N}})$ into $\R^{N}$.

The induced metric has components
\begin{equation}
g_{ij} = \IP{\partial_if}{\partial_jf},
\label{EQmetric}
\end{equation}
where $\partial$ denotes the regular partial derivative and $\IP{\cdot}{\cdot}$ is the standard Euclidean inner product.
Integration on $\Sigma$ is performed with respect to the induced area element
\begin{equation}
d\mu = \sqrt{\text{det }g}\ d\SL^n,
\label{EQdmu}
\end{equation}
where $d\SL^n$ is the standard Hausdorff measure on $\R^n$.

The \emph{second fundamental form} $A$ is a symmetric $(0,2)$ tensor field in
the normal bundle of $f$ with components
\begin{equation}
A_{ij} = (\partial^2_{ij}f)^\perp
\label{EQsff}
\end{equation}
There are two invariants of $A$ relevant to our work here: the first is the
trace with respect to the metric
\[
\vec{H} = \text{trace}_g\ A = g^{ij}A_{ij}
\]
called the \emph{mean curvature vector}, and the second the \emph{tracefree second fundamental form} defined by
\[
A^o_{ij} = A_{ij} - \frac1n\vec{H}g_{ij}\,.
\]
We define the \emph{Gauss curvature} to be
\[
K = \frac12(|\vec{H}|^2 - |A|^2)
\,.
\]
From \eqref{EQsff} and the smoothness of $f$ we see that the second fundamental form is
symmetric; less obvious but equally important is the symmetry of the first covariant derivatives of
$A$: 
\[ \nabla_iA_{jk} = \nabla_jA_{ik} = \nabla_kA_{ij}; \]
these are the Codazzi equations.
In the case here of high codimension they follow with $\nabla$ the connection
induced in the normal bundle along $f$ from the fact that the ambient space has
constant curvature.

One basic consequence of the Codazzi equations which we shall make use of is
that the gradient of the mean curvature is completely controlled by a
contraction of the $(0,3)$ tensor $\nabla A^o$.  To see this, first note that
\[
\nabla_i A^i_j = \nabla_i \vec{H} = \nabla_i \Big( (A^o)^i_j + \frac1n g_j^i \vec{H}\Big),
\]
then factorise to find
\begin{equation}
\label{EQbasicgradHgradAo}
\nabla_j\vec{H} = 2\nabla_i (A^o)^i_j =: \frac{n}{n-1}(\nabla^* A^o)_j\,.
\end{equation}
This in fact shows that all derivatives of $A$ are controlled by derivatives of $A^o$.
For a $(p,q)$ tensor field $T$, let us denote by $\nabla_{(n)}T$ the tensor field with components
 $\nabla_{i_1\ldots i_n}T_{j_1\ldots j_q}^{k_1\ldots k_p} =
  \nabla_{i_1}\cdots\nabla_{i_n}T_{j_1\ldots j_q}^{k_1\ldots k_p}$.
In our notation, the $i_n$-th covariant derivative is applied first.
Since
\[
\nabla_{(k)} A = \Big(\nabla_{(k)} A^o + \frac1n g \nabla_{(k)} \vec{H}\Big)
               = \Big(\nabla_{(k)} A^o + \frac1{n-1} g \nabla_{(k-1)}\nabla^*A^o\Big),
\]
we have
\begin{equation}
\label{EQbasicgradHgradAo2}
|\nabla_{(k)}A|^2 \le \frac{2n-1}{n-1}|\nabla_{(k)}A^o|^2\,.
\end{equation}
The fundamental relations between components of the Riemann curvature tensor $R_{ijkl}$, the Ricci
tensor $R_{ij}$ and scalar curvature $R$ are given by Gauss' equation
\begin{align*}
R_{ijkl} &= \IP{A_{ik}}{A_{jl}} - \IP{A_{il}}{A_{jk}},\intertext{with contractions}
g^{jl}R_{ijkl}
   = R_{ik} 
  &= \IP{\vec{H}}{A_{ik}} - \IP{A_i^j}{A_{jk}}\text{, and}\\
g^{ik}R_{ik}
   = R
  &= |H|^2 - |A|^2.
\end{align*}
We will need to interchange covariant derivatives.
For a $(0,m)$-tensor $T$ normal along $f$ we have
\begin{equation}
\label{EQinterchangespecific}
\nabla_{ij}T = \nabla_{ji}T + R^\perp_{ij}T
\end{equation}
where
\[
R^\perp_{ij}T = A_{ki}\IP{A_{kj}}{T} - A_{kj}\IP{A_{ki}}{T}
 = A^o_{ki}\IP{A^o_{kj}}{T} - A^o_{kj}\IP{A^o_{ki}}{T}
\,.
\]
Note that $\IP{R^\perp_{ij}T}{T} = 0$

We also use for normal tensor fields $T$ and $S$ the notation $T*S$ to denote a
linear combination of new tensors, each formed by contracting pairs of indices
from $T$ and $S$ by the metric $g$ with multiplication by a universal constant.
The resultant tensor will have the same type as the other quantities in the
expression it appears.
We denote polynomials in the iterated normal derivatives of $T$ by
\[
P_j^i(T) = \sum_{k_1+\ldots+k_j = i} c_{ij}\nabla_{(k_1)}T*\cdots*\nabla_{(k_j)}T,
\]
where the constants $c_{ij}\in\R$ are absolute.
As is common for the $*$-notation, we slightly abuse these constants when certain
subterms do not appear in our $P$-style
terms. For example
\begin{align*}
|\nabla A|^2
    = \IP{\nabla A}{\nabla A}
    = 1\cdot\left(\nabla_{(1)}A*\nabla_{(1)}A\right) + 0\cdot\left(A*\nabla_{(2)}A\right)
    = P_2^2(A).
\end{align*}
Using the Codazzi equation with the interchange of covariant derivative formula given above, we
obtain Simons' identity \cite{JS}:
\begin{equation}
\label{EQsi}
\Delta A = \nabla_{(2)}H + A*A*A.             
\end{equation}
The interchange of covariant derivatives formula for mixed tensor fields $T$ is simple to state in $*$-notation:
\begin{equation}
\label{EQinterchangegeneral}
\nabla_{ij}T = \nabla_{ji}T + T*A*A\,.
\end{equation}
Let $\{\partial_1f,\ldots,\partial_nf\}$ be an orthonormal basis for $T_pM$ and
$\{\nu_1,\ldots,\nu_{N-n}\}$ be an orthonormal basis for $N_pM$ with
Christoffel symbols in the normal bundle vanishing at $p$, that is,
$\overline{\Gamma}(p) = 0$.
We call such a frame for $T_pM \otimes N_pM$ a \emph{normal frame}.
Then
\begin{align}
\partial_i\nu_\alpha
 &=  \IP{\partial_i\nu_\alpha}{\partial_kf}\partial_kf 
  + \IP{\partial_i\nu_\alpha}{\nu_\beta}\nu_\beta
\notag\\
 &= - \IP{A_i^k}{\nu_\alpha}\partial_kf + \overline{\Gamma}_{i\alpha}^\beta\nu_\beta
  = - \IP{A_i^k}{\nu_\alpha}\partial_kf
\label{EQfirstW}
\intertext{so that}
\Delta\nu_\alpha
 &= -g^{ij}\big(
      \IP{\partial_jA_i^k}{\nu_\alpha}\partial_kf
      + 
      \IP{A_i^k}{\partial_j\nu_\alpha}\partial_kf
      + 
      \IP{A_i^k}{\nu_\alpha}\partial_j\partial_kf
           \big)
\notag\\
 &= -\IP{\nabla \vec{H}}{\nu_\alpha}
    -\IP{A^{ij}}{\nu_\alpha}A_{ij}\,.
\label{EQjacobi}
\end{align}
In most of our integral estimates, we include a function $\gamma:M^n\rightarrow\R$ in the integrand.
Eventually, this will be specialised to a smooth cutoff function on the
preimage of balls on $\R^{n+1}$ via the immersion $f$.
For now however, let us only assume that $\gamma = \tilde{\gamma}\circ f$, where
\begin{equation}
\tag{${\gamma}$}
0\le\tilde{\gamma}\le 1,\qquad\text{ and }\qquad
\vn{\tilde{\gamma}}_{C^2(\R^{n+1})} \le c_{\tilde{\gamma}} < \infty.
\end{equation}
Using the chain rule, this implies $D\gamma = (D\tilde{\gamma}\circ f)Df$ and then
$D^2\gamma = (D^2\tilde{\gamma}\circ f)(Df,Df) + (D\tilde{\gamma}\circ f)D^2f(\cdot,\cdot)$.
A routine calculation shows that there exists a constant $\cg=\cg(c_{\tilde{\gamma}}) \in \R$ such that
\begin{equation}
\label{EQgamma}
\tag{$\gamma$}
|\nabla\gamma| \le c\,\cg\,,\quad
|\nabla_{(2)}\gamma| \le c\,\cg(c_\gamma + |A|)\,,
\quad\text{ and }\quad
|\nabla_{(3)}\gamma| \le c\,\cg(\cg^2 + \cg|A| + |A|^2 +|\nabla A|).
\end{equation}
When we write ``for a function $\gamma:M^n\rightarrow\R$ as in \eqref{EQgamma}'' we mean a function
$\gamma:M^n\rightarrow\R$ as above, satisfying all conditions labeled \eqref{EQgamma}, which additionally
achieves the values zero and one in at least two points on $M^n$.

We note that if $\tilde{\gamma}$ is a cutoff function on a ball in $\R^{n+1}$
of radius $\rho$, then we may choose $\cg = \frac{c}\rho$ where $c$ is a
universal constant and we have used that
$c_{\tilde{\gamma}}=c_{\tilde{\gamma}}(\rho)$.

\subsection{The normal flow}

Chen's flow has tangential and normal components.
We calculate in a normal frame using \eqref{EQjacobi}:
\begin{align*}
\Delta^2 f &= \Delta \vec{H}
 = \Delta\Big(\IP{\vec{H}}{\nu_\alpha}\nu_\alpha\Big)
 = \Delta\big(H_\alpha\nu_\alpha\big)
\\
 &= (\Delta H_\alpha)\nu_\alpha + H_\alpha\Delta\nu_\alpha + 2\IP{\nabla H_\alpha}{\nabla\nu_\alpha}
\\
 &= \Delta H_\alpha\,\nu_\alpha - H_\alpha A_{ij}\IP{A^{ij}}{\nu_\alpha}
    + \Big(
       2\IP{\nabla H_\alpha}{\nabla\nu_\alpha} - H_\alpha\IP{\nabla \vec{H}}{\nu_\alpha}
      \Big)
\,.
\end{align*}
To see that the bracketed term is tangential, we compute using \eqref{EQfirstW}:
\begin{align*}
&\IP{2\IP{\nabla H_\alpha}{\nabla\nu_\alpha} - H_\alpha\IP{\nabla \vec{H}}{\nu_\alpha}}{\nu_\beta}
\\
 &\hskip+2cm= \IP{- 2g^{ij}\partial_iH_\alpha \IP{A^k_j}{\nu_\alpha}\partial_kf
                  - H_\alpha g^{ik}\IP{\partial_i\vec{H}}{\nu_\alpha} \partial_k f}{\nu_\beta}
\\
 &\hskip+2cm= \Big(
                  - 2g^{ij}\partial_iH_\alpha \IP{A^k_j}{\nu_\alpha}
                  - H_\alpha g^{ik}\IP{\partial_i\vec{H}}{\nu_\alpha}\Big)\IP{\partial_k f}{\nu_\beta}
\\
 &\hskip+2cm= 0
\,.
\end{align*}
It is a standard result that for closed curvature flow tangential motion acts
in the diffeomorphism group of $M^n$, which is tantamount to a
reparametrisation at each time (see for example \cite[Chapter 3]{bakerthesis}).
Therefore Chen's flow is equivalent to the purely normal flow:
\begin{equation}
\label{EQwf}
\tag{NCF}
(\partial_t f)(p,t) = -(\Delta^2 f)^{\perp}
 = -\big(\Delta H_\alpha\,\nu_\alpha - H_\alpha A_{ij}\IP{A^{ij}}{\nu_\alpha}\big)
 = -\BF
\,,
\end{equation}
with initial conditions $f(\cdot,0)=f_0$.
For simplicity we conduct our analysis with this formulation.

Note that we may express the velocity $\BF$ in a coordinate invariant manner as
\[
\BF = \Delta^\perp\vec{H} - Q(A)\vec{H}
\]
where $Q$ is a normal endomorphism of $NM$ acting on a section $\phi$ by
\[
Q(A)\phi = A_{ij}\IP{A^{ij}}{\phi}\,.
\]
The same endomorphism arises in the study of the Willmore flow in high codimension, see
for example (2.4) in \cite{kuwert2002gfw}.

\section{Finite-time singularities and evolution equations}

The following evolution equations hold (see Lemma 2.2 in \cite{kuwert2002gfw}):

\begin{lem}
\label{LMevolutionequations}
For $f:M^n\times[0,T)\rightarrow \R^{N}$ evolving by $\partial_tf = -\BF$ the following equations hold:
\begin{align*}
  \pD{t}g_{ij} &= 2\IP{\BF}{A_{ij}}\,,\qquad
  \pD{t}d\mu  = \IP{\vec{H}}{\BF} \,d\mu\,,\qquad
  \pD{t}g^{ij}  = -2\IP{\BF}{A^{ij}}\,,\\
  \npD{}{t}A_{ij} &= -\nabla_{ij}\BF  + A_{ik}\IP{A^k_j}{\BF} \,,
\end{align*}
where $\npD{t}\phi = (\partial_t\phi)^\perp$.
\end{lem}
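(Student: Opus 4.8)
The plan is to derive each identity by direct differentiation of its defining formula, exploiting that the velocity $-\BF$ is a purely normal vector field together with the Gauss--Weingarten relations \eqref{EQfirstW}. It is convenient to fix a point $p\in M^n$, work in a normal frame there (so that $\Gamma^k_{ij}(p)=0$, $\partial_i g_{jk}(p)=0$ and $\overline{\Gamma}(p)=0$), and then invoke tensoriality: once the identities hold at such a $p$, they hold everywhere.

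For the metric, commuting $\partial_t$ with $\partial_i$ in $g_{ij}=\IP{\partial_i f}{\partial_j f}$ gives $\pD{t}g_{ij}=-\IP{\partial_i\BF}{\partial_j f}-\IP{\partial_i f}{\partial_j\BF}$; differentiating the orthogonality relation $\IP{\BF}{\partial_j f}=0$ turns each term into $+\IP{\BF}{\partial_i\partial_j f}=\IP{\BF}{A_{ij}}$, so $\pD{t}g_{ij}=2\IP{\BF}{A_{ij}}$. Differentiating $g^{ik}g_{kj}=\delta^i_j$ then yields $\pD{t}g^{ij}=-g^{ik}g^{jl}\pD{t}g_{kl}=-2\IP{\BF}{A^{ij}}$, and Jacobi's formula for $\pD{t}\det g$ gives $\pD{t}d\mu=\tfrac12 g^{ij}(\pD{t}g_{ij})\,d\mu=\IP{\BF}{g^{ij}A_{ij}}\,d\mu=\IP{\vec H}{\BF}\,d\mu$.

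The substantive step is the evolution of $A_{ij}=(\partial_i\partial_j f)^\perp=\partial_i\partial_j f-\Gamma^k_{ij}\partial_k f$. Differentiating in $t$ at $p$ (where the bare $\Gamma$-terms vanish) leaves $\pD{t}A_{ij}=-\partial_i\partial_j\BF-(\pD{t}\Gamma^k_{ij})\partial_k f$; the second summand is tangential, so it disappears under $(\cdot)^\perp$ and one is left with $\npD{t}A_{ij}=-(\partial_i\partial_j\BF)^\perp$. To evaluate this I would write Weingarten in the form $\partial_j\BF=\nabla_j\BF-\IP{A_j^k}{\BF}\partial_k f$, differentiate once more, and take the normal part at $p$: differentiating the normal term $\nabla_j\BF$ produces the Hessian $\nabla_{ij}\BF$ (using $\Gamma(p)=0$ to identify $\nabla_i\nabla_j$ with $\nabla_{(2)}$), differentiating the scalar coefficient $\IP{A_j^k}{\BF}$ contributes only a tangential term, and $-\IP{A_j^k}{\BF}\,\partial_i\partial_k f$ contributes $-\IP{A_j^k}{\BF}A_{ik}$ after projection. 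Collecting gives $\npD{t}A_{ij}=-\nabla_{ij}\BF+A_{ik}\IP{A^k_j}{\BF}$, as claimed. (Alternatively one simply specialises Lemma~2.2 of \cite{kuwert2002gfw}, which is stated for an arbitrary normal speed.)

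The only delicate point is this last computation. The main obstacle is bookkeeping rather than difficulty: one must remember that $\partial_t$, $\partial_i$ and the projections $(\cdot)^\perp$, $(\cdot)^\top$ do not commute away from $p$, keep straight which tensors are normal versus tangential at each stage, and check that the $\pD{t}\Gamma$ contribution is genuinely tangential so that it drops out of the normal flow. Working at a single point in a normal frame is precisely the device that makes all of this manageable compared with a computation in arbitrary coordinates.
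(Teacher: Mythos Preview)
Your proof is correct and is exactly the standard derivation; the paper itself does not give a proof but simply cites Lemma~2.2 of \cite{kuwert2002gfw}, which you also note as an alternative. Your direct computation in a normal frame is precisely what that reference carries out, so there is nothing to add.
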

Using the $P$-notation introduced in the previous section we write the
evolution of the second fundamental form as
\begin{align*}
  \npD{t}A_{ij} = &- \nabla_{ij}\Delta^\perp \vec{H} + \big(P_3^2 + P_5^0\big)(A) \,.
\end{align*}
Interchanging covariant derivatives and applying \eqref{EQsi} then gives the following lemma:
\begin{lem}
\label{LMevoforA}
For $f:M^n\times[0,T)\rightarrow \R^{N}$ evolving by \eqref{EQwf} the following equation holds:
\begin{align*}
  \npD{t}A_{ij} = &- (\Delta^\perp)^2 A_{ij} + \big(P_3^2 + P_5^0 \big)(A)\,.
\end{align*}
\end{lem}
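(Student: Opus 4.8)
The plan is to begin from the first-variation formula for the second fundamental form already recorded in Lemma \ref{LMevolutionequations}, namely $\npD{t}A_{ij} = -\nabla_{ij}\BF + A_{ik}\IP{A^k_j}{\BF}$, insert the coordinate-invariant decomposition $\BF = \Delta^\perp\vec H - Q(A)\vec H$, and then convert the resulting leading term $-\nabla_{ij}\Delta^\perp\vec H$ into $-(\Delta^\perp)^2 A_{ij}$ up to $P$-terms by means of Simons' identity \eqref{EQsi} and the interchange formula \eqref{EQinterchangegeneral}; throughout, the error terms are tracked with the $P$-notation. First I would dispose of the lower-order contributions: since $Q(A)\vec H = A_{ij}\IP{A^{ij}}{\vec H}$ is a cubic contraction of $A$ carrying no derivatives it is of type $P_3^0(A)$, so $\nabla_{ij}(Q(A)\vec H)$, obtained by applying the product rule twice, is of type $P_3^2(A)$; likewise, using $\nabla g = 0$ to write $\Delta^\perp\vec H$ as a metric contraction of $\nabla_{(2)}A$, one sees $A_{ik}\IP{A^k_j}{\Delta^\perp\vec H} = A*A*\nabla_{(2)}A = P_3^2(A)$ and $A_{ik}\IP{A^k_j}{Q(A)\vec H} = A*A*A*A*A = P_5^0(A)$. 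This already recovers the intermediate identity $\npD{t}A_{ij} = -\nabla_{ij}\Delta^\perp\vec H + (P_3^2 + P_5^0)(A)$ stated in the text, and the remaining task is to rewrite $-\nabla_{ij}\Delta^\perp\vec H$.

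For that, Simons' identity \eqref{EQsi} gives $\nabla_{ij}\vec H = \Delta^\perp A_{ij} - (A*A*A)_{ij} = \Delta^\perp A_{ij} + P_3^0(A)$; applying $\Delta^\perp$ and distributing its two derivatives over the cubic term by the product rule yields $\Delta^\perp\nabla_{ij}\vec H = (\Delta^\perp)^2 A_{ij} + P_3^2(A)$. It then remains to commute $\Delta^\perp$ past $\nabla_{ij}$, which I would do by invoking \eqref{EQinterchangegeneral} a bounded number of times to move the two derivatives of $\Delta^\perp = g^{kl}\nabla_k\nabla_l$ to the inside. Each interchange produces one curvature term, which by the Gauss equation and the formula for $R^\perp_{ij}$ recorded above has the form $A*A$ acting on the tensor currently being differentiated; since four covariant derivatives act in all and each extracted curvature factor uses up two derivative slots as its two copies of $A$, every commutator error has the schematic shape $\nabla_{(p)}\vec H * \nabla_{(q)}A * \nabla_{(r)}A$ with $p+q+r = 2$, and because covariant differentiation commutes with the metric trace (equivalently, by \eqref{EQbasicgradHgradAo}, $\nabla_{(p)}\vec H$ is controlled by $\nabla_{(p)}A$) each such term is of type $P_3^2(A)$. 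Hence $\nabla_{ij}\Delta^\perp\vec H = (\Delta^\perp)^2 A_{ij} + P_3^2(A)$, and substitution into the intermediate identity gives the claim.

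The computation is otherwise routine and follows the template of Kuwert--Sch\"atzle \cite{kuwert2002gfw}; the one place that needs genuine care is the bookkeeping in the $P$-notation, specifically confirming that commuting $\Delta^\perp$ past $\nabla_{ij}$ only ever feeds into $P_3^2(A)$ and never produces a $P_5^0(A)$ contribution. This holds because a single interchange introduces exactly one curvature factor $A*A$, so the most one can accumulate over the boundedly many interchanges required is three $A$-factors sharing two derivatives --- precisely $P_3^2(A)$ --- with the genuine $P_5^0(A)$ term in the final statement originating solely from $A_{ik}\IP{A^k_j}{Q(A)\vec H}$.
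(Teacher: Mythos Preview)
Your proof is correct and follows essentially the same approach as the paper: the paper's entire argument is the single sentence ``Interchanging covariant derivatives and applying \eqref{EQsi} then gives the following lemma,'' and you have carried out precisely that computation, supplying the bookkeeping the paper omits. Your care in verifying that the commutator errors contribute only to $P_3^2(A)$ (and not $P_5^0(A)$) is a useful addition beyond what the paper states explicitly.
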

\begin{lem}
\label{LMevolutionequationhigher}
For $f:M^n\times[0,T)\rightarrow \R^{N}$ evolving by \eqref{EQwf} the following equation holds:
\begin{align*}
 \pD{}{t}\nabla_{(k)}A_{ij}
&= -\Delta^2\nabla_{(k)}A + \big(P_3^{k+2} + P_5^k\big)(A) \,.
\end{align*}
\end{lem}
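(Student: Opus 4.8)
The plan is to prove the identity by induction on $k$, the case $k=0$ being precisely Lemma \ref{LMevoforA} (note that $P_3^2+P_5^0$ is exactly $P_3^{k+2}+P_5^k$ at $k=0$). Throughout, $\pD{t}$ acting on a normal tensor field is understood as the normal projection of the time derivative; the tangential corrections coming from the moving frame are of strictly lower order and are swept into the $P$-terms. For the inductive step I would write $\nabla_{(k)}A = \nabla\big(\nabla_{(k-1)}A\big)$ and commute $\pD{t}$ past the outermost covariant derivative,
\[
\pD{t}\nabla_{(k)}A = \nabla\big(\pD{t}\nabla_{(k-1)}A\big) + \big(\pD{t}\Gamma\big)*\nabla_{(k-1)}A\,,
\]
where $\Gamma$ denotes the connection coefficients of $\nabla$ on normal tensor fields, i.e.\ the Levi-Civita symbols of $g$ together with the normal connection coefficients $\overline{\Gamma}$ appearing in \eqref{EQfirstW}.

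For the connection term, Lemma \ref{LMevolutionequations} gives $\pD{t}g_{ij}=2\IP{\BF}{A_{ij}}$; differentiating this together with the Levi-Civita formula for $\Gamma$ in terms of $g$, and differentiating the defining relation \eqref{EQfirstW} for $\overline{\Gamma}$ (using $\pD{t}\partial_if = -\partial_i\BF$), one finds that the connection coefficients evolve by terms of the form $\nabla(\BF*A)$ and $A*\nabla\BF$. Since the velocity splits as $\BF = \Delta^\perp\vec{H}-Q(A)\vec{H} = P_1^2(A)+P_3^0(A)$, these lie in the class $P_2^3(A)+P_4^1(A)$, so tensoring with the single factor of $A$ carried by $\nabla_{(k-1)}A$ gives $(\pD{t}\Gamma)*\nabla_{(k-1)}A = P_3^{k+2}(A)+P_5^k(A)$. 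For the remaining term, the inductive hypothesis yields $\nabla\big(\pD{t}\nabla_{(k-1)}A\big) = -\nabla\Delta^2\nabla_{(k-1)}A + \nabla\big((P_3^{k+1}+P_5^{k-1})(A)\big)$, and the Leibniz rule turns the error into $P_3^{k+2}(A)+P_5^k(A)$. Finally I would commute the outer $\nabla$ through $\Delta^2 = g^{ab}g^{cd}\nabla_a\nabla_b\nabla_c\nabla_d$ using the interchange formula \eqref{EQinterchangegeneral} once for each of the four inner derivatives; each interchange contributes a single factor $A*A$, with the remaining derivatives redistributed by the Leibniz rule, so that $\nabla\Delta^2\nabla_{(k-1)}A = \Delta^2\nabla_{(k)}A + P_3^{k+2}(A)$. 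Assembling the three displays gives the claim. (Simons' identity \eqref{EQsi} is used only in the base case, to replace $\nabla_{(2)}\Delta^\perp\vec{H}$ by $\Delta^2A$ modulo $P$-terms.)

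The routine part is the Leibniz bookkeeping; the one point deserving care — and the main, if mild, obstacle — is checking that no monomials outside the classes $P_3^{k+2}$ and $P_5^k$ can arise: no $P_4$ (or other even) terms, and no $P_7$ or higher. This is a parity-and-count argument. Every monomial of $\BF$ carries an odd number of copies of $A$, so $\pD{t}g$, and hence $\pD{t}\Gamma$, carry an even number (two or four), and multiplying by the single $A$ of $\nabla_{(k-1)}A$ restores oddness with at most five factors; meanwhile, when a single covariant derivative is commuted past $\Delta^2$, the curvature pairs $A*A$ produced by successive interchanges sit in separate monomials rather than nested inside one another, so exactly three factors survive in each. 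One must also confirm that the time derivative of the normal connection $\overline{\Gamma}$ genuinely lands in $P_2^3+P_4^1$ and not in something larger, which is precisely where the explicit formulas \eqref{EQfirstW} and \eqref{EQjacobi} are needed.
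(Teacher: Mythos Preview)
Your proposal is correct and is precisely the standard argument the paper has in mind: the paper does not give its own proof of this lemma but immediately remarks that the structure is identical to the Willmore flow case in \cite{kuwert2002gfw}, and your induction on $k$---commuting $\pD{t}$ past $\nabla$ via the evolution of the connection, applying the inductive hypothesis, and then commuting $\nabla$ through $\Delta^2$ via \eqref{EQinterchangegeneral}---is exactly that computation. Your parity-and-count discussion of why only $P_3^{k+2}$ and $P_5^k$ survive is the only point that genuinely needs checking, and you have it right.
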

Note that this is exactly the same structure that arises in the Willmore flow.
Therefore the $n=2$ case of the lifespan theorem can be proved using the
methods of \cite{kuwert2002gfw}.
For $n=3$, the work in \cite{mySDLTE} can be adapted along the lines of
\cite{mythesis,MWW10}.
For $n=4$, different arguments are required.

We now state the evolution of curvature quantities along the flow.
The proof is standard, and can be adapted from \cite{kuwert2002gfw}.

\begin{lem}
\label{LMenergyhighervanillaconstantricci}
Let $f:M^n\times[0,T)\rightarrow \R^{N}$ be a solution of \eqref{EQwf} and
$\gamma$ be as in \eqref{EQgamma}.
Suppose $s\ge2k+4$.
For each $\delta>0$ there exists a constant $c\in(0,\infty)$ depending only on $s$, $n$, $N$ and
$\delta$ such that the following estimate holds:
\begin{align*}
\rD{}{t}&\int_M|\nabla_{(k)}A|^2\gamma^sd\mu
    + (2-\delta)\int_M |\nabla_{(k+2)}A|^2 \gamma^sd\mu
\\*
&\le
    c(\cg)^{2k+4}\int_M |A|^2\gamma^{s-2k-4}d\mu
  + c\int_M \nabla_{(k)}A*\Big(P_3^{k+2}(A)+P_5^k(A)
  \Big)\,\gamma^sd\mu
\,.
\end{align*}
\end{lem}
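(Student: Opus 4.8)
The plan is to differentiate the localised energy $\int_M |\nabla_{(k)}A|^2\gamma^s\,d\mu$ using the evolution equations collected in Lemmas \ref{LMevolutionequations} and \ref{LMevolutionequationhigher}, integrate the leading (biharmonic) term by parts, and absorb all lower-order contributions into the stated two terms via interpolation. First I would compute
\[
\rD{}{t}\int_M|\nabla_{(k)}A|^2\gamma^sd\mu
= \int_M 2\IP{\pD{t}\nabla_{(k)}A}{\nabla_{(k)}A}\gamma^sd\mu
+ \int_M |\nabla_{(k)}A|^2\,\pD{t}(\gamma^sd\mu)\,,
\]
where the first integrand is expanded using Lemma \ref{LMevolutionequationhigher}, which furnishes the term $-\Delta^2\nabla_{(k)}A + (P_3^{k+2}+P_5^k)(A)$, and the second is bounded crudely since $\pD{t}\gamma = 0$ (as $\gamma = \tilde\gamma\circ f$ and $\pD{t}f = -\BF$ has normal direction, actually $\pD{t}\gamma = \nIP{D\tilde\gamma\circ f}{-\BF}$, which is of order $P_3^0(A)+P_5^{-2}(A)$ type and hence harmless after Young's inequality) and $\pD{t}d\mu = \IP{\vec H}{\BF}d\mu$ is of order $|A|P_3^0(A)$ type.

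The core step is the integration by parts on $-2\int_M\IP{\Delta^2\nabla_{(k)}A}{\nabla_{(k)}A}\gamma^sd\mu$. Moving two of the four derivatives onto $\nabla_{(k)}A$ and the other two onto $\nabla_{(k)}A\,\gamma^s$ produces the good term $-2\int_M|\nabla_{(k+2)}A|^2\gamma^sd\mu$ plus error terms in which at least one derivative lands on a power of $\gamma$; using the pointwise bounds \eqref{EQgamma} for $|\nabla\gamma|,|\nabla_{(2)}\gamma|,|\nabla_{(3)}\gamma|$ and Young's inequality with parameter $\delta$, each such error is dominated by $\tfrac\delta2\int_M|\nabla_{(k+2)}A|^2\gamma^sd\mu$ together with terms of the form $\int_M|\nabla_{(j)}A|^2\gamma^{s-2(k+2-j)}(\text{powers of }\cg)\,d\mu$ with $j\le k+1$. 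These lower-order-derivative integrals are then interpolated (Gagliardo–Nirenberg–type inequalities on the localised energies, exactly as in the corresponding step of \cite{kuwert2002gfw}, using $s\ge 2k+4$ so that enough powers of $\gamma$ survive each integration by parts) against the top term $\int_M|\nabla_{(k+2)}A|^2\gamma^sd\mu$ and the base term $(\cg)^{2k+4}\int_M|A|^2\gamma^{s-2k-4}d\mu$, absorbing the former with another $\tfrac\delta2$ and leaving the latter with the constant $c(\cg)^{2k+4}$. The polynomial error $\int_M\nabla_{(k)}A*(P_3^{k+2}+P_5^k)(A)\gamma^sd\mu$ is left as-is, matching the statement.

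The main obstacle will be the bookkeeping in the interpolation step: one must verify that every intermediate integral $\int_M|\nabla_{(j)}A|^2\gamma^{m}d\mu$ arising from the integration by parts carries at least the weight $\gamma^{s-2k-4}$ needed so that, after applying the localised interpolation inequality, one genuinely lands only on the two terms on the right-hand side and never on an integral with a negative power of $\gamma$ — this is precisely where the hypothesis $s\ge 2k+4$ is consumed, and tracking which $\cg$-powers accompany each term requires care. Since Lemma \ref{LMevolutionequationhigher} guarantees the same principal structure ($-\Delta^2$ plus $P_3^{k+2}+P_5^k$) as for the Willmore flow, the argument is structurally identical to Kuwert–Schätzle's, and I would simply indicate that the computation proceeds \emph{mutatis mutandis} from \cite{kuwert2002gfw}, recording only the places where the high-codimension interchange formula \eqref{EQinterchangegeneral} and the bounds \eqref{EQgamma} are invoked.
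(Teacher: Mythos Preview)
Your proposal is correct and follows precisely the approach the paper indicates: the paper gives no proof of this lemma beyond the sentence ``The proof is standard, and can be adapted from \cite{kuwert2002gfw}'', and your outline---differentiate, insert Lemma~\ref{LMevolutionequationhigher}, integrate the $-\Delta^2\nabla_{(k)}A$ term by parts, absorb the $\gamma$-derivative errors via Young and localised interpolation using $s\ge 2k+4$---is exactly that standard argument. Your only slip is cosmetic (the phrase ``$P_5^{-2}(A)$ type'' is ill-formed; the $\gamma_t$ contribution is simply $c_\gamma|\BF|$, hence $c_\gamma(|\nabla_{(2)}A|+|A|^3)$, which after multiplication by $|\nabla_{(k)}A|^2\gamma^{s-1}$ is absorbed as you describe).
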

Area is monotone under the flow:
\begin{lem}
\label{lemmapreservationarea}
Let $n\in\{2,3,4\}$.
For $f:M^n\times[0,T)\rightarrow \R^{N}$ evolving by \eqref{EQwf} we have
\begin{align*}
\mu(f_t)^\frac4n \le \mu(f_0)^\frac4n - C_nt
\end{align*}
where for $n\in\{2,3,4\}$ we have $C_n = 4\omega_n^{\frac4n} n^2$, and for $n
>4$ we have $C_n =  \frac{\omega_n^{\frac4n}}{n^24^{4n+3}}$. Here
$\omega_n$ denotes the area of the unit $n$-sphere.
\end{lem}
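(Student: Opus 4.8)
The plan is to read off the exact derivative of the area from the first variation formula and then, via a scale-invariant lower bound for $\int_M|\vec H|^4\,d\mu$, to convert the dissipation into the claimed differential inequality for $\mu(f_t)^{4/n}$.

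First I would use that $\mu(f_t)=\int_M d\mu$ with $M$ fixed, so that Lemma~\ref{LMevolutionequations} gives $\rD{}{t}\mu(f_t)=\int_M\IP{\vec H}{\BF}\,d\mu$. Substituting $\BF=\Delta^\perp\vec H-Q(A)\vec H$ and integrating by parts on the closed manifold $M^n$,
\begin{equation*}
\rD{}{t}\mu(f_t)=-\int_M|\nabla^\perp\vec H|^2\,d\mu-\int_M\sum_{i,j}\IP{A_{ij}}{\vec H}^2\,d\mu\le0\,,
\end{equation*}
where in the last term one uses $Q(A)\vec H=A_{ij}\IP{A^{ij}}{\vec H}$, so that $\IP{\vec H}{Q(A)\vec H}=\sum_{i,j}\IP{A_{ij}}{\vec H}^2$ in an orthonormal frame; this already yields monotonicity of the area. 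Discarding the gradient term and applying Cauchy--Schwarz to the diagonal, $\sum_{i,j}\IP{A_{ij}}{\vec H}^2\ge\frac1n\big(\sum_i\IP{A_{ii}}{\vec H}\big)^2=\frac1n|\vec H|^4$, so
\begin{equation*}
\rD{}{t}\mu(f_t)\le-\frac1n\int_M|\vec H|^4\,d\mu\,.
\end{equation*}

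Next I would establish a bound of the form $\int_M|\vec H|^4\,d\mu\ge c_n\,\mu(f_t)^{1-4/n}$. For $n\in\{2,3,4\}$ I would invoke the Willmore-type inequality $\int_M|\vec H|^n\,d\mu\ge n^n\omega_n$, valid for every closed $M^n$ immersed in $\R^N$ in arbitrary codimension (for $n=2$ this is the classical Willmore inequality; for $n\in\{3,4\}$ it follows from Chern--Lashof integral geometry, see e.g.~\cite{chen13}), combined with H\"older's inequality $\int_M|\vec H|^n\,d\mu\le\big(\int_M|\vec H|^4\,d\mu\big)^{n/4}\mu(f_t)^{1-n/4}$, which is legitimate since $n\le4$; together these give the \emph{sharp} bound $\int_M|\vec H|^4\,d\mu\ge n^4\omega_n^{4/n}\mu(f_t)^{1-4/n}$, i.e.\ $c_n=n^4\omega_n^{4/n}$. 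For $n>4$ no such sharp inequality is at hand, so instead I would feed $u\equiv1$ into the explicit Michael--Simon Sobolev inequality \cite{michael1973sam}, in the form $\mu(f_t)^{(n-1)/n}\le\frac{4^{n+1}}{\omega_n^{1/n}}\int_M|\vec H|\,d\mu$, and combine it with H\"older's inequality $\int_M|\vec H|\,d\mu\le\big(\int_M|\vec H|^4\,d\mu\big)^{1/4}\mu(f_t)^{3/4}$ to obtain $c_n=\omega_n^{4/n}4^{-4n-4}$.

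Finally I would close the argument: combining the two displays above with the bound for $\int_M|\vec H|^4\,d\mu$ gives $\rD{}{t}\mu(f_t)\le-\frac{c_n}{n}\,\mu(f_t)^{1-4/n}$, which is precisely $\rD{}{t}\mu(f_t)^{4/n}=\frac4n\mu(f_t)^{4/n-1}\rD{}{t}\mu(f_t)\le-\frac{4c_n}{n^2}=-C_n$, and integrating in time from $0$ yields $\mu(f_t)^{4/n}\le\mu(f_0)^{4/n}-C_nt$. A short computation confirms $\frac{4}{n^2}n^4\omega_n^{4/n}=4n^2\omega_n^{4/n}$ and $\frac{4}{n^2}\omega_n^{4/n}4^{-4n-4}=\frac{\omega_n^{4/n}}{n^24^{4n+3}}$, matching the stated constants; moreover for the round sphere $\nabla^\perp\vec H\equiv0$ and every inequality used above is an equality, consistently with the known homothetic evolution. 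I expect the crux to be the middle step: the sharpness of $C_n$ for $n\in\{2,3,4\}$ rests on the sharp $L^n$ Willmore--Chen inequality, which is elementary only in dimension $2$, requires genuine integral-geometric input in dimensions $3$ and $4$, and is unavailable for $n>4$ -- which is exactly why the constant degrades there. The first display and the final integration are routine once Lemma~\ref{LMevolutionequations} is in hand.
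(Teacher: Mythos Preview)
Your proof is correct and follows essentially the same approach as the paper's: differentiate the area, use $\IP{Q(A)\vec H}{\vec H}\ge\frac1n|\vec H|^4$, then for $n\le4$ invoke Chen's sharp inequality $\int_M|\vec H|^n\,d\mu\ge n^n\omega_n$ (the paper cites the original \cite{chen1971total} rather than the survey) combined with H\"older, and for $n>4$ use Michael--Simon with $u\equiv1$ plus H\"older. The only cosmetic difference is that the paper phrases the $n>4$ step as following from Michael--Simon ``by an approximation argument'', but since $M$ is closed your direct substitution of $u\equiv1$ into Theorem~\ref{TMmss} is valid.
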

\begin{proof}
Differentiating,
\begin{align*}
\frac{d}{dt}\mu(f_t) &= \frac{d}{dt} \int_M d\mu
\\
&= \int_M \IP{\vec{H}}{\Delta^\perp \vec{H}} - \IP{Q(A)\vec{H}}{\vec{H}}\,d\mu\,.
\intertext{Using the estimate $\IP{Q(A)\vec{H}}{\vec{H}} \ge \frac1n|\vec{H}|^4$ and
the divergence theorem we estimate}
\frac{d}{dt}\mu(f_t)
&\le -\int_M |\nabla \vec H|^2\,d\mu  - \frac1n\int_M |\vec H|^4\,d\mu
 \le - \frac1n\int_M |\vec H|^4\,d\mu\,.
\end{align*}
Now we use the inequality
\begin{equation}
\label{EQchen}
\int_M |\vec H|^4\,d\mu
\ge \hat C_n\mu(f_t)^\frac{n-4}{n}\,,
\end{equation}
to estimate 
\[
\frac{n}{4}\Big(\mu(f_t)^\frac{4}{n}\Big)' \le -\frac{\hat C_n}{n}
\,.
\]
This implies
\[
\mu(f_t)^\frac4n \le \mu(f_0)^\frac4n - 4\frac{\hat C_n}{n^2}t
\]
as required.
The inequality \eqref{EQchen} follows for $n=2,3,4$ from the fundamental sharp estimate
\[
\int_M |\vec H|^n\,d\mu \ge \omega_nn^n
\]
of Chen \cite{chen1971total}.
For $n=4$ \eqref{EQchen} is immediate, whereas for $n=2,3$ we first use H\"older's inequality
\[
\int_M |\vec H|^n\,d\mu \le \bigg(\int_M |\vec H|^4\,d\mu\bigg)^\frac{n}{4}\mu(f_t)^\frac{4-n}{4}
\]
and then rearrange, to obtain
\[
\int_M |\vec H|^4\,d\mu
\ge \bigg(\int_M |\vec H|^n\,d\mu\bigg)^\frac{4}{n} \mu(f_t)^\frac{n-4}{n}
\ge \omega_n^\frac{4}{n}n^4 \mu(f_t)^\frac{n-4}{n}\,,
\]
that is, the estimate \eqref{EQchen} with $\hat C_n = \omega_n^\frac{4}{n}n^4$.
For $n>4$, this argument does not work and we must lose some sharpness in the constant.
In this case, we use Theorem 28.4.1 of \cite{burago1988gi} to estimate $\vn{H}_1$ from
below in terms of the area scaled appropriately.
Such an estimate follows directly from the Michael-Simon Sobolev
inequality (see Theorem 2.1 in \cite{michael1973sam}, stated in Theorem
\ref{TMmss} below) by an approximation argument:
\[
\mu(f_t)^{\frac{n-1}{n}} \le \frac{4^{n+1}}{\omega_n^{1/n}}\int_M |\vec{H}|\,d\mu
\,.
\]
Using H\"older's inequality we find
\[
\int_M |\vec{H}|\,d\mu \le \bigg(\int_M |\vec{H}|^4\,d\mu\bigg)^\frac14
                           \mu(f_t)^\frac34
\]
and so
\[
\int_M|\vec{H}|^4\,d\mu
 \ge \frac{\omega_n^{\frac4n}}{4^{4n+4}} \mu(f_t)^{\frac{n-4}n}
\,.
\]
This establishes \eqref{EQchen} with $\hat C_n =  \frac{\omega_n^{\frac4n}}{4^{4n+4}}$.
\end{proof}

\begin{proof}[Proof of Theorem \ref{TMfinite}]
Assume that the flow remains smooth with
$T>\frac{\mu(f_0)^\frac4n}{C_n}$. Then, applying Lemma
\ref{lemmapreservationarea} with
\[
t = \frac{\mu(f_0)^\frac4n}{C_n}
\]
shows that $\mu(f_t) = 0$, contradicting the assumption that the flow remains
smooth for $t\in[0,T)$.
Therefore either the family $f$ shrink to a point, in which case $T \le
\frac{\mu(f_0)}{C_n}$, or there is a loss of regularity beforehand.
In either case we have the estimate \eqref{EQmaxTest} as required.
\end{proof}

\section{Integral estimates with small concentration of curvature}

The argument for $n=3$ and $n=4$ is by necessity different to that for $n=2$.
This is due to the important role played by the Michael-Simon Sobolev
inequality.

\begin{thm}[Theorem 2.1 in \cite{michael1973sam}] Let $f:M^n\rightarrow\R^{N}$
be a smooth immersed submanifold.  Then for any $u\in C_c^1(M)$ we have
\[  \left(\int_M |u|^{n/(n-1)}d\mu\right)^{(n-1)/n}
    \le \frac{4^{n+1}}{\omega_n^{1/n}} \int_M |\nabla u| + |u|\,|\vec H|\, d\mu\,.
\]
\label{TMmss}
\end{thm}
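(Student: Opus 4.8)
This is the Michael--Simon Sobolev inequality; the plan is to reconstruct their proof, which simplifies here because $M$ is closed and so no boundary terms appear. First I would reduce to the case $u \ge 0$: since $|\nabla |u|| \le |\nabla u|$ almost everywhere, replacing $u$ by $|u|$ leaves the left-hand side unchanged and does not increase the right-hand side, and after a mollification one may assume $u \ge 0$ is Lipschitz with compact support and that $\mathcal{E} := \int_M (|\nabla u| + u|\vec H|)\,d\mu$ is finite.

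The engine is a monotonicity formula. Fix $x_0 \in \R^N$, write $r = |f - x_0|$, and apply the first-variation identity on the closed manifold $M$,
\[
\int_M \Div_M X\,d\mu = -\int_M \IP{X}{\vec H}\,d\mu,
\]
to the $\R^N$-valued field $X = u\,\gamma(r)(f - x_0)$, with $\gamma$ a radial cutoff. Using $\Div_M(f - x_0) = n$, the chain rule, $\IP{\nabla^M r}{f - x_0} = r|\nabla^M r|^2 \le r$, $|\IP{f - x_0}{\vec H}| \le r|\vec H|$, and the coarea formula, then letting $\gamma$ increase to $\mathbf{1}_{[0,\sigma]}$, one obtains for the weighted mass $\psi_{x_0}(\sigma) := \int_{f^{-1}(B_\sigma(x_0))} u\,d\mu$ and almost every $\sigma > 0$,
\[
-\frac{d}{d\sigma}\bigl(\sigma^{-n}\psi_{x_0}(\sigma)\bigr)
\;\le\;
\sigma^{-n}\!\!\int_{f^{-1}(B_\sigma(x_0))}\!\!\bigl(|\nabla u| + u|\vec H|\bigr)\,d\mu.
\]
Integrating over $(0,\rho)$, using that a smooth immersion has Euclidean density at every image point (so $\liminf_{\sigma \to 0}\sigma^{-n}\psi_{x_0}(\sigma) \ge \tfrac{\omega_{n-1}}{n}u(p_0)$ when $x_0 = f(p_0)$), and rearranging $\int_0^\rho \tau^{-n}\bigl(\int_{f^{-1}(B_\tau(x_0))} \cdots\bigr)d\tau$ by Fubini together with $\int_r^\rho \tau^{-n}\,d\tau \le \tfrac{r^{1-n}}{n-1}$, I arrive at the pointwise bound: for every $p_0$ with $u(p_0) > 0$ and every $\rho > 0$,
\begin{multline*}
\frac{\omega_{n-1}}{n}\,u(p_0)
\;\le\;
\rho^{-n}\!\!\int_{f^{-1}(B_\rho(f(p_0)))}\!\! u\,d\mu
\\
+\;\frac{1}{n-1}\int_{f^{-1}(B_\rho(f(p_0)))}\bigl(|\nabla u| + u|\vec H|\bigr)\,|f - f(p_0)|^{1-n}\,d\mu.
\end{multline*}

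It remains to upgrade this pointwise density estimate to the global inequality, and the most convenient route is through the associated isoperimetric inequality. Applying the pointwise bound with $u$ replaced by a mollification of $\mathbf{1}_E$ and running a Vitali $5r$-covering of $E$ by balls on which the right-hand side is comparable to a fixed fraction of $1$, one gets $\mu(E)^{(n-1)/n} \le c(n)\bigl(\mathrm{Per}_M(E) + \int_E |\vec H|\,d\mu\bigr)$ for, say, open $E \Subset M$ with smooth boundary; tracking the density constant $\tfrac{\omega_{n-1}}{n}$ and the covering constant through this produces exactly $c(n) = \tfrac{4^{n+1}}{\omega_n^{1/n}}$. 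The Sobolev inequality then follows for $u \ge 0$ by the coarea formula and Minkowski's integral inequality: writing $u = \int_0^\infty \mathbf{1}_{\{u>t\}}\,dt$,
\begin{multline*}
\Bigl(\int_M u^{n/(n-1)}\,d\mu\Bigr)^{(n-1)/n}
\;\le\;
\int_0^\infty \mu(\{u>t\})^{(n-1)/n}\,dt
\\
\le\;
c(n)\int_0^\infty\Bigl(\mathrm{Per}_M(\{u>t\}) + \int_{\{u>t\}}|\vec H|\,d\mu\Bigr)dt
\;=\;
c(n)\int_M\bigl(|\nabla u| + u|\vec H|\bigr)\,d\mu,
\end{multline*}
the final equality by the coarea formula and Fubini.

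The main obstacle is the covering step of the previous paragraph. Because no \emph{a priori upper} bound on the density of $M$ is available, the non-local term $\rho^{-n}\int_{f^{-1}(B_\rho)} u\,d\mu$ in the pointwise estimate cannot simply be dropped by sending $\rho \to \infty$; one must instead select, for each centre, a stopping radius at which this term together with the curvature potential balances the threshold, and then apply the Vitali lemma so that the disjoint balls it returns carry enough weight to reconstitute $\mu(E)$. One also cannot shortcut the argument via the Hardy--Littlewood--Sobolev inequality, which fails at this borderline Lebesgue exponent; and pinning down the precise constant $\tfrac{4^{n+1}}{\omega_n^{1/n}}$, rather than merely \emph{some} dimensional constant, is a further bookkeeping step, carried out in detail in \cite{michael1973sam}.
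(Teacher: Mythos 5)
The paper does not actually prove Theorem \ref{TMmss} --- it is quoted directly from Michael--Simon \cite{michael1973sam} --- so the only question is whether your reconstruction is sound. Its first half is: the reduction to $u\ge 0$, the first-variation identity applied to $X=u\,\gamma(r)(f-x_0)$, the resulting differential inequality $-\frac{d}{d\sigma}\big(\sigma^{-n}\psi_{x_0}(\sigma)\big)\le \sigma^{-n}\int_{f^{-1}(B_\sigma(x_0))}(|\nabla u|+u|\vec{H}|)\,d\mu$ for a.e.\ $\sigma$, and the density lower bound as $\sigma\to0$ are exactly the engine of the original argument.

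The gap is in the bridge from this to the global inequality. Once you integrate over $(0,\rho)$ and apply Fubini, your error term carries the singular weight $|f-f(p_0)|^{1-n}$. This Riesz potential of $|\nabla u|+u|\vec{H}|$ is not controlled by $\int_M(|\nabla u|+u|\vec{H}|)\,d\mu$, and because the weight depends on the centre it does not aggregate over the disjoint balls produced by the Vitali lemma, so the claimed isoperimetric bound $\mu(E)^{(n-1)/n}\le c\big(\mathrm{Per}_M(E)+\int_E|\vec{H}|\,d\mu\big)$ does not follow from the covering you describe. The measure side does not close either: a stopping radius gives you the lower bound $\mu(E\cap B_{\rho_\xi})\ge c\,\rho_\xi^{\,n}$, but to convert $\mu(E)\le\sum_i\mu(E\cap B_{5\rho_i})$ into $\big(\sum_i\int_{B_{\rho_i}}\cdots\big)^{n/(n-1)}$ you need an upper bound $\mu(E\cap B_{5\rho_i})\lesssim\rho_i^{\,n}$, i.e.\ precisely the upper density bound you correctly note is unavailable. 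Michael and Simon avoid both problems by never forming the kernel estimate in the Sobolev part: their selection lemma is applied directly to the two unweighted nondecreasing functions $\phi_\xi(\rho)=\int_{f^{-1}(B_\rho(\xi))}u\,d\mu$ and $\psi_\xi(\rho)=\int_{f^{-1}(B_\rho(\xi))}(|\nabla u|+u|\vec{H}|)\,d\mu$ satisfying the differential inequality, and it chooses $\rho_\xi$ so that $\phi_\xi$ at the \emph{dilated} radius is comparable to $\rho_\xi^{\,n}$ unless $\psi_\xi$ is already large at that scale; the level-set/coarea assembly is then carried out for $u$ itself rather than through a separate isoperimetric inequality. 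Finally, the statement that the bookkeeping ``produces exactly'' $4^{n+1}/\omega_n^{1/n}$ is asserted rather than derived; if you intend the exact constant, check the normalisation of $\omega_n$ (unit-ball volume in \cite{michael1973sam}, versus the unit-sphere area used elsewhere in this paper), since the two differ.
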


Notice the exponent on the left.  Our eventual goal for this section is to prove local $L^\infty$
estimates for all derivatives of curvature under a hypothesis that the local
concentration of curvature is small.  Our main tool to convert $L^q$ bounds to
$L^\infty$ bounds is the following theorem, which is an $n$-dimensional
analogue of Theorem 5.6 from \cite{kuwert2002gfw}.  The proof is contained in
Appendix A of \cite{mythesis}.

\begin{thm}
\label{myLZthm}
Let $f:M^n\rightarrow\R^{N}$ be a smooth immersed submanifold. For $u\in C_c^1(M)$,
$n<p\le\infty$, $0\le \beta\le \infty$ and $0<\alpha<1$ where $\frac{1}{\alpha} =
\big(\frac{1}{n}-\frac{1}{p}\big)\beta + 1$ we have
\begin{equation}
  \vn{u}_\infty \le c\vn{u}_\beta^{1-\alpha}(\vn{\nabla u}_p + \vn{\vec Hu}_p)^\alpha,
\label{myLZthmeqn}
\end{equation}
where $c = c(p,n,N,\beta)$.
\end{thm}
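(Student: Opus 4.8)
The plan is to prove the estimate by a Moser-type iteration fuelled by the Michael--Simon Sobolev inequality (Theorem~\ref{TMmss}), and then to fix the interpolation exponent by a scaling argument. Abbreviate $\Lambda := \vn{\nabla u}_p + \vn{\vec{H}u}_p$. We may assume $u\not\equiv 0$ (otherwise the estimate is trivial), whence also $\Lambda>0$ (otherwise the reverse-H\"older inequality of the next paragraph already forces $u\equiv0$); moreover the relation $\tfrac1\alpha = (\tfrac1n-\tfrac1p)\beta + 1$ together with $0<\alpha<1$ and $p>n$ forces $\beta\in(0,\infty)$. Write $\sigma := \tfrac{n}{n-1}$ and $p' := \tfrac{p}{p-1}$ and record the elementary equivalence
\[
p > n \quad\iff\quad a := \sigma/p' = \tfrac{n(p-1)}{p(n-1)} > 1 ;
\]
this single inequality is exactly what drives the iteration forward.

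The first step is to insert $v = |u|^q$ with $q > 1$ into Theorem~\ref{TMmss}. Since $t\mapsto|t|^q$ is $C^1$ for $q>1$ we have $v\in C_c^1(M)$, with $|\nabla v| = q|u|^{q-1}|\nabla u|$ and $|v|\,|\vec{H}| = |u|^{q-1}|\vec{H}u|$; using $q>1$ to absorb the constant and then H\"older's inequality with exponents $p$ and $p'$ gives, writing $I(r) := \vn{u}_r$,
\[
I(q\sigma)^{q} = \Big(\int_M|u|^{q\sigma}\,d\mu\Big)^{1/\sigma} \le c_n\,q\,\Lambda\,\Big(\int_M|u|^{(q-1)p'}\,d\mu\Big)^{1/p'} = c_n\,q\,\Lambda\,I\big((q-1)p'\big)^{q-1},
\]
and hence $I(q\sigma) \le (c_n q\Lambda)^{1/q}\,I((q-1)p')^{(q-1)/q}$.

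Next I would iterate from the base exponent $r_0 := \beta$. Set $q_k := r_{k-1}/p' + 1 > 1$ and $r_k := q_k\sigma = a\,r_{k-1} + \sigma$, so that $(q_k-1)p' = r_{k-1}$ and the previous display becomes $I(r_k) \le (c_n q_k\Lambda)^{1/q_k} I(r_{k-1})^{(q_k-1)/q_k}$. Since $a>1$ the exponents $r_k$ increase geometrically to $\infty$, so $q_k\to\infty$ and therefore $\sum_k\tfrac1{q_k} < \infty$ and $\sum_k\tfrac{\log q_k}{q_k} < \infty$. Iterating down to $r_0$ yields
\[
I(r_k) \le \Big(\prod_{j=1}^{k}(c_n q_j\Lambda)^{\theta_j^{(k)}}\Big)\, I(\beta)^{\Theta_k}, \qquad \Theta_k := \prod_{j=1}^{k}\frac{q_j-1}{q_j},
\]
with each $\theta_j^{(k)}>0$ and the identity $\sum_{j=1}^k\theta_j^{(k)} + \Theta_k = 1$ propagating from $\tfrac1{q_j} + \tfrac{q_j-1}{q_j} = 1$ at each step. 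The two summability facts give $\Theta_k\to\Theta_\infty\in(0,1)$ and a uniform bound $\prod_j(c_n q_j)^{\theta_j^{(k)}}\le C = C(n,p,\beta)<\infty$. Letting $k\to\infty$, and using $I(r_k)\to\vn{u}_\infty$ (valid since $u\in L^\infty\cap L^\beta$ and $d\mu$ charges every nonempty open set), we arrive at
\[
\vn{u}_\infty \le C\,\Lambda^{S}\,\vn{u}_\beta^{1-S}, \qquad S := 1 - \Theta_\infty \in (0,1).
\]

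It remains to identify $S$. The inequality just proved holds for every smooth immersion, in particular for the dilates $\lambda f$, $\lambda>0$; under $f\mapsto\lambda f$ the quantity $\vn{u}_\infty$ is unchanged while $\vn{u}_\beta\mapsto\lambda^{n/\beta}\vn{u}_\beta$ and $\vn{\nabla u}_p,\vn{\vec{H}u}_p\mapsto\lambda^{n/p-1}(\cdot)$, hence $\Lambda\mapsto\lambda^{n/p-1}\Lambda$. Feeding this in and letting $\lambda\to0$ and $\lambda\to\infty$ forces the exponent of $\lambda$ to vanish, i.e. $S(\tfrac np - 1) + (1-S)\tfrac n\beta = 0$, equivalently $\tfrac1S = (\tfrac1n-\tfrac1p)\beta + 1 = \tfrac1\alpha$; thus $S = \alpha$, which is the assertion. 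The main obstacle is the iteration bookkeeping — the geometric growth of the $r_k$ (the sole place where $p>n$ is used), the convergence of the infinite product of constants, and the homogeneity identity $\sum_j\theta_j^{(k)}+\Theta_k=1$ that makes the two exponents sum to $1$ — after which the scaling step that pins down $\alpha$ is immediate.
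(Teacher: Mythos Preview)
Your proof is correct and follows essentially the same route the paper points to (it cites Appendix~A of \cite{mythesis} and says the argument follows \cite{ladyzhenskaya1968laq} and \cite{kuwert2002gfw}): Moser iteration driven by the Michael--Simon inequality, with $p>n$ entering precisely through the geometric growth factor $a=\sigma/p'>1$. The one genuine difference is cosmetic but worth noting: instead of computing $\Theta_\infty$ explicitly from the recursion $r_k=ar_{k-1}+\sigma$ (which gives $\Theta_k=a^k\beta/r_k\to \beta/(\beta+\sigma/(a-1))=1-\alpha$ directly), you identify the exponent by scaling $f\mapsto\lambda f$. Both are clean; your scaling step is slightly slicker and makes the role of the relation $\tfrac1\alpha=(\tfrac1n-\tfrac1p)\beta+1$ transparent, while the direct computation has the advantage of being self-contained without invoking invariance under dilation.
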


The proof follows ideas from \cite{ladyzhenskaya1968laq} and
\cite{kuwert2002gfw}.  Due to the exponent in the Michael-Simon Sobolev
inequality (which is itself an isoperimetric obstruction), it is not possible
to decrease the lower bound on $p$, even at the expense of other parameters in
the inequality.

For $n=3$, it is possible to use $p=4$ in Theorem \ref{myLZthm}.
This means that estimates on the same quantities as in the $n=2$
case may be used.
For $n=4$ we are not able to use $p=4$ in Theorem \ref{myLZthm}.
We thus need to estimate new quantities in this case.

\begin{lem}
\label{MS1lem}
Let $\gamma$ be as in \eqref{EQgamma}.  Then:
\begin{enumerate}
\item[(i)] For an immersed surface $f:M^2\rightarrow\R^{N}$, $s\ge4$, we
have
  \begin{align*}
  \int_M \big(|A|^2|\nabla A|^2 + |A|^6\big)\,\gamma^sd\mu
   &\le  c\int_{[\gamma>0]}|A|^2d\mu\int_M(|\nabla_{(2)}A|^2 + |A|^6)\gamma^sd\mu
   \\
   &\hskip+2cm + c(\cg)^4\Big( \int_{[\gamma>0]}|A|^2d\mu \Big)^2,
  \end{align*}
where $c = c(s,N)$ is an absolute constant.
\item[(ii)] For an immersion $f:M^4\rightarrow\R^N$, $s\ge2$, we have
\begin{align*}
	\int_M \big(|\nabla A|^2|A|^2 + |A|^6\big)\,\gamma^s\,d\mu
	\le
	&\ \theta\int_M |\nabla_{(2)}A|^2\,\gamma^s\,d\mu
	     + c(\vn{A}_{4,[\gamma>0]}^\frac43
	     + \vn{A}_{4,[\gamma>0]}^4)
	      \int_M |A|^6\,\gamma^s\,d\mu
	     \\&
	   + (\cg)^2\vn{A}_{4,[\gamma>0]}^4\,,
\end{align*}
and for $s\ge4$ we have
\begin{align*}
	\int_M \big(|\nabla A|^2|A|^3 + |A|^7\big)\,\gamma^s\,d\mu
	&\le
	 \big(c\vn{A}_{3,[\gamma>0]}^2 + \theta\big)\int_M \big(|\nabla_{(2)}A|^2|A| + |\nabla A|^2|A|^3 + |A|^7\big)\,\gamma^s\,d\mu
	     \\&\qquad
	   + (\cg)^4\vn{A}_{3,[\gamma>0]}^3\,,
\end{align*}
where $\theta\in(0,1)$ and $c = c(s,\theta,N)$ is an absolute constant.
\item[(iii)] For an immersion $f:M^4\rightarrow\R^N$, $s\ge8$, we have
\begin{align*}
\int_M &\big(|A|^2|\nabla_{(2)}A|^2 + |A|^4|\nabla A|^2 + |\nabla A|^4 + |A|^8\big)\,\gamma^s\,d\mu
\notag\\&\hskip-3mm
 + \bigg(\int_M |\nabla_{(2)}A|^{\frac{12}5}\,\gamma^{\frac{4s}{5}}\,d\mu\bigg)^\frac54
 + \bigg(\int_M |\nabla A|^3\,\gamma^{\frac{s}{2}}\,d\mu\bigg)^2
\notag\\
&\le
     \big(\theta + c\vn{A}_{4,[\gamma>0]}^\frac43\big)
     \int_M \big(|\nabla_{(3)}A|^2 + |A|^2\,|\nabla_{(2)}A|^2 + |\nabla A|^4 + |A|^8\big)\,\gamma^s\,d\mu
\notag\\&\quad
 + c\vn{A}_{4,[\gamma>0]}^\frac{20}{3}
    \bigg(\int_M |\nabla A|^3\,\gamma^\frac{s}{2}\,d\mu\bigg)^2
 + c\vn{A}_{4,[\gamma>0]}^\frac23
         \bigg(
           \int_M |\nabla_{(2)} A|^\frac{12}{5}\,\gamma^\frac{4s}{5}\,d\mu
         \bigg)^\frac54
\notag\\&\quad
 + c(c_\gamma)^4\Big(
           1
           + \vn{A}_{4,[\gamma>0]}^4
           + [(c_\gamma)^4\mu_\gamma(f)]^6
           \Big)\vn{A}_{4,[\gamma>0]}^3
      \,,
\end{align*}
where $\theta\in(0,1)$ and $c = c(s,\theta,N)$ is an absolute constant.
\item[(iv)] for an immersion $f:M^4\rightarrow\R^N$, $s\ge16$, we have
\begin{align*}
\int_M &\big(
          |\nabla_{(3)}A|^2|A|^2 + |\nabla_{(2)}A|^2|A|^4 + |\nabla_{(2)}A|^2|\nabla A|^2
        + |\nabla A|^2|A|^6 + |A|^{10}\big)\gamma^s\,d\mu
\\&
  + \bigg(
    \int_M |\nabla A|^4\,\gamma^{\frac{2s}{3}}\,d\mu
 \bigg)^\frac32
	+ \vn{A}_{4,[\gamma>0]}^2\int_M |\nabla_{(3)}A|^4\,\gamma^{2s}\,d\mu
\\&\le
           (\theta + c\vn{A}_{4,[\gamma>0]}^4)\int_M
                 \big(
             |\nabla_{(4)}A|^2
           + |\nabla_{(2)}A|^2|\nabla A|^2
           + |\nabla_{(2)}A|^2|A|^4
	   + |\nabla A|^2|A|^6 + |A|^{10}
                 \big)
                 \gamma^s\,d\mu
\notag\\&\quad
  + \vn{A}_{4,[\gamma>0]}^6
     \bigg(
       \int_M |\nabla A|^4\,\gamma^\frac{2s}{3}\,d\mu
     \bigg)^\frac32
	+ c(\theta + \vn{A}_{4,[\gamma>0]}^\frac23)\vn{A}_{4,[\gamma>0]}^2\int_M |\nabla_{(3)}A|^4\,\gamma^{2s}\,d\mu
\notag\\&\quad
  + c(\cg)^6\vn{A}_{4,[\gamma>0]}^2
            \big(1+[(\cg)^4\mu_\gamma(f)]^\frac12
            \big)
            \big(1 + \vn{A}_{4,[\gamma>0]}^2
	    \big)
\,.
\end{align*}
where $\theta\in(0,1)$ and $c = c(s,\theta,N)$ is an absolute constant.
\item[(v)] for an immersion $f:M^4\rightarrow\R^N$, $s\ge4$, we have
\begin{align}
\int_M |A|^8\,\gamma^s\,d\mu
\le c\vn{A}_{4,[\gamma>0]}^\frac43\int_M\big(|\nabla A|^4 + |A|^8\big)\,\gamma^s\,d\mu
 + c(\cg)^4\vn{A}_{4,[\gamma>0]}^\frac{16}3
 \label{EQnew1}
\end{align}
and
\begin{align}
\int_M |\nabla A|^4\,\gamma^s\,d\mu
\le c\int_M |\nabla_{(2)}A|^2|A|^2\,\gamma^s\,d\mu
 + c(\cg)^4\vn{A}_{4,[\gamma>0]}^4
 \label{EQnew2}
\end{align}
where $c = c(s,N)$ is an absolute constant.
\end{enumerate}
\end{lem}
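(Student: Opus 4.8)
Each of (i)--(v) is a localised multiplicative interpolation inequality of Gagliardo--Nirenberg / Ladyzhenskaya type; the engine is the Michael--Simon Sobolev inequality (Theorem~\ref{TMmss}), with an integration by parts taking its place for the two ``new'' estimates in (v). The estimate (i) for surfaces is essentially the one used in \cite{kuwert2002gfw} and serves as the template for all of the others. I would prove each item by the same scheme. First, choose a test function $u=|A|^{a}|\nabla A|^{b}|\nabla_{(2)}A|^{c}\cdots\,\gamma^{\sigma}$ whose critical power $u^{n/(n-1)}$ reproduces, up to a universal constant, one of the leading terms on the left-hand side (when $n=2$ the relevant power is $u^{2}$, when $n=4$ it is $u^{4/3}$). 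Apply Theorem~\ref{TMmss} and expand $\nabla u$ by the product and chain rules: each factor $\nabla\gamma$, $\nabla_{(2)}\gamma$, $\nabla_{(3)}\gamma$ is replaced using \eqref{EQgamma}, which lowers the power of $\gamma$ by at most the order of the derivative, emits a factor $\cg$, and for $\nabla_{(2)}\gamma$, $\nabla_{(3)}\gamma$ also reintroduces lower-order curvature $|A|,|\nabla A|$ carrying the \emph{same} power of $\gamma$ as the leading term; the factor $|u|\,|\vec H|$ in Theorem~\ref{TMmss} contributes one extra power of $|A|$. Next, apply Hölder's inequality to each resulting summand so as to peel off a \emph{positive} power of the ``small'' quantity $\vn{A}_{n,[\gamma>0]}$ (a power of $\vn{A}_{3,[\gamma>0]}$ for the odd-weight estimates in (ii)), leaving behind either the square root of the target term, or the square root of the top-order weighted-$L^2$ term that appears on the right-hand side, or a pure remainder built only from $\cg$ and $\vn{A}_{n,[\gamma>0]}$. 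Finally, undo the exponent of the first step (square when $n=2$, raise to the power $4/3$ when $n=4$, distributing over the sum) and invoke Young's inequality to absorb the top-order terms and the self-referential terms, the fixed $\theta$ being chosen small by hand and the $\vn{A}_{n,[\gamma>0]}$-factors being made small by hypothesis; the remaining pieces form the final line.

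Two features of the statement are consequences of this mechanism. The thresholds $s\ge 4,8,16,\dots$ are exactly what makes the $\gamma$-redistribution in the Hölder step legal: after shedding one or two powers of $\gamma$ per covariant derivative one must still have enough $\gamma$ left over to recombine the pieces, and the more derivatives appear the more one needs. The ``free'' terms $\big(\int_M|\nabla_{(2)}A|^{12/5}\gamma^{4s/5}\,d\mu\big)^{5/4}$, $\big(\int_M|\nabla A|^{3}\gamma^{s/2}\,d\mu\big)^{2}$ and $\big(\int_M|\nabla A|^4\gamma^{2s/3}\,d\mu\big)^{3/2}$ on the left of (iii) and (iv) are not proved separately: they are precisely the $L^{n/(n-1)}=L^{4/3}$ quantities that the Michael--Simon step produces when $u$ is a product involving $\nabla_{(2)}A$ or $\nabla A$, and keeping them on the left rather than discarding them is what later allows the hierarchy of estimates to close. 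For (v) I would avoid Michael--Simon entirely and integrate by parts directly: $\int_M|\nabla A|^4\gamma^s\,d\mu=-\int_M \nabla_k\!\big(|\nabla A|^2\nabla_kA\,\gamma^s\big)* A\,d\mu$ produces only the terms $|\nabla_{(2)}A|\,|\nabla A|^2|A|\gamma^s$ and $\cg|\nabla A|^3|A|\gamma^{s-1}$, which Cauchy--Schwarz and Young's inequality reduce to \eqref{EQnew2}; then \eqref{EQnew1} follows from the analogous manipulation of $\int_M|A|^8\gamma^s\,d\mu$, using \eqref{EQnew2} to trade $|\nabla A|^4$ for $|\nabla_{(2)}A|^2|A|^2$. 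These two estimates are best established first, so that they may be invoked freely within (iii) and (iv).

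The genuine difficulty is not any single move but the simultaneous bookkeeping required for (iii) and (iv). One must choose several test functions at once, one for each leading term, so that: (a) the factor peeled off by Hölder's inequality, wherever it multiplies a copy of the left-hand side or the top-order term, is a positive power of $\vn{A}_{4,[\gamma>0]}$; (b) every intermediate power of $\gamma$ stays nonnegative given only the stated lower bound on $s$ (worst case $s\ge 16$ in (iv)); and (c) every top-order contribution --- including the awkward mixed term $\vn{A}_{4,[\gamma>0]}^{2}\int_M|\nabla_{(3)}A|^4\gamma^{2s}\,d\mu$ in (iv) --- either absorbs into the highest-order weighted-$L^2$ term or returns to the left with coefficient strictly less than $1$. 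In particular the four-dimensional exponent $4/3$ forces the fractional quantities $L^{12/5}(\nabla_{(2)}A)$, $L^{3}(\nabla A)$ and $L^{4}(\nabla A)$ into the argument, and one must check that these interpolate consistently against the $L^2$-based norms on both sides. I expect this to be the bulk of the work, and it is presumably the computation the paper defers to its Appendix.
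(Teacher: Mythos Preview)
Your sketch is accurate and matches the paper's approach closely: Michael--Simon applied to carefully chosen test functions, followed by H\"older to peel off powers of $\vn{A}_{n,[\gamma>0]}$, then Young to absorb, with the $s$-thresholds and the fractional $L^{12/5}$, $L^3$, $L^4$ quantities arising exactly as you describe. The paper does indeed defer the entire computation to its Appendix, and the bulk of it is the bookkeeping for (iii) and (iv) that you anticipate. One small correction: for \eqref{EQnew1} you cannot ``integrate by parts directly'' on $\int_M|A|^8\gamma^s\,d\mu$, since there is no derivative to move; the paper obtains \eqref{EQnew1} via Michael--Simon (it is the intermediate estimate \eqref{EQinter} that falls out during the proof of (iii)), whereas your integration-by-parts route is correct only for \eqref{EQnew2}. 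Also, in (ii) the paper handles $\int_M|\nabla A|^2|A|^2\gamma^s\,d\mu$ by integration by parts rather than Michael--Simon, which is slightly cleaner than your uniform scheme but not essentially different.
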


We postpone the proof of Lemma \ref{MS1lem} to the Appendix.
Under an appropriate smallness condition, many terms can be absorbed, yielding
the following Corollary.

\begin{cor}
\label{MS1cor}
Let $\gamma$ be as in \eqref{EQgamma}.
There exists an $\varepsilon>0$ depending only on $n$, $s$, and $N$ such that if
\[
\int_{[\gamma>0]}|A|^n\,d\mu \le \varepsilon \le 1
\]
we have
\begin{enumerate}
\item[(i)] for an immersed surface $f:M^2\rightarrow\R^{N}$, $s\ge4$:
  \begin{align*}
  \int_M \big(|A|^2|\nabla A|^2+|A|^6\big)\,\gamma^sd\mu
   &\le  c\varepsilon\bigg(\int_M |\nabla_{(2)}A|^2\,\gamma^sd\mu + (\cg)^4\bigg)
  \end{align*}
where $c = c(s,N)$ is an absolute constant.
\item[(ii)] for an immersion $f:M^4\rightarrow\R^N$, $s\ge2$, we have
\begin{align*}
	\int_M \big(|\nabla A|^2|A|^2 + |A|^6\big)\,\gamma^s\,d\mu
	\le
	&\ \theta\int_M |\nabla_{(2)}A|^2\,\gamma^s\,d\mu
	   + (\cg)^2\vn{A}_{4,[\gamma>0]}^4\,,
\end{align*}
and for $s\ge4$ we have
\begin{align*}
	\int_M \big(|\nabla A|^2|A|^3 + |A|^7\big)\,\gamma^s\,d\mu
	&\le
	 \big(c\vn{A}_{3,[\gamma>0]}^2 + \theta\big)\int_M |\nabla_{(2)}A|^2|A|\,\gamma^s\,d\mu
	   + (\cg)^4\vn{A}_{3,[\gamma>0]}^3\,,
\end{align*}
where $\theta\in(0,1)$ and $c = c(s,\theta,N)$ is an absolute constant.
\item[(iii)] for an immersion $f:M^4\rightarrow\R^N$, $s\ge8$, we have
\begin{align*}
\int_M &\big(|A|^2|\nabla_{(2)}A|^2 + |A|^4|\nabla A|^2 + |\nabla A|^4 + |A|^8\big)\,\gamma^s\,d\mu
\notag\\
&\le
     \big(\theta + c\varepsilon^\frac13\big)
     \int_M |\nabla_{(3)}A|^2\,\gamma^s\,d\mu
 + c(c_\gamma)^4\Big(
           1
           + \varepsilon
           + [(c_\gamma)^4\mu_\gamma(f)]^6
           \Big)\vn{A}_{4,[\gamma>0]}^3
      \,,
\end{align*}
where $\theta\in(0,1)$ and $c = c(s,\theta,N)$ is an absolute constant.
\item[(iv)] for an immersion $f:M^4\rightarrow\R^N$, $s\ge16$, we have
\begin{align*}
\int_M &\big(
          |\nabla_{(3)}A|^2|A|^2 + |\nabla_{(2)}A|^2|A|^4 + |\nabla_{(2)}A|^2|\nabla A|^2
        + |\nabla A|^2|A|^6 + |A|^{10}\big)\gamma^s\,d\mu
\\&\le
           (\theta + c\vn{A}_{4,[\gamma>0]}^4)\int_M
             |\nabla_{(4)}A|^2
                 \gamma^s\,d\mu
  + c(\cg)^6\vn{A}_{4,[\gamma>0]}^2
            \big(1+[(\cg)^4\mu_\gamma(f)]^\frac12
            \big)
            \big(1 + \vn{A}_{4,[\gamma>0]}^2
	    \big)
\,.
\end{align*}
where $\theta\in(0,1)$ and $c = c(s,\theta,N)$ is an absolute constant.
\item[(v)] for an immersion $f:M^4\rightarrow\R^N$, $s\ge4$, we have
\begin{align*}
	\int_M |A|^8\,\gamma^s\,d\mu
\le c\vn{A}_{4,[\gamma>0]}^\frac43\int_M|\nabla A|^4\,\gamma^s\,d\mu
 + c(\cg)^4\vn{A}_{4,[\gamma>0]}^\frac{16}3
\end{align*}
and
\begin{align*}
\int_M |\nabla A|^4\,\gamma^s\,d\mu
\le c\int_M |\nabla_{(2)}A|^2|A|^2\,\gamma^s\,d\mu
 + c(\cg)^4\vn{A}_{4,[\gamma>0]}^4
\end{align*}
where $c = c(s,N)$ is an absolute constant.
\end{enumerate}
\end{cor}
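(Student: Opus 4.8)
The plan is to read each of the five assertions off from the corresponding inequality in Lemma~\ref{MS1lem} by using the hypothesis to make the coefficients of certain terms as small as we like, and then absorbing those terms into the left-hand side. No new geometry is needed: all the analytic content already sits in Lemma~\ref{MS1lem}.

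First I would record the elementary consequences of $\int_{[\gamma>0]}|A|^n\,d\mu\le\varepsilon\le1$. For $n=2$ this is just $\int_{[\gamma>0]}|A|^2\,d\mu\le\varepsilon$, together with $\big(\int_{[\gamma>0]}|A|^2\,d\mu\big)^2\le\varepsilon$ since $\varepsilon\le1$. For $n=4$ it gives $\vn{A}_{4,[\gamma>0]}^4\le\varepsilon$, hence $\vn{A}_{4,[\gamma>0]}^q\le\varepsilon^{q/4}$ for every $q>0$; since only positive powers of $\vn{A}_{4,[\gamma>0]}$ multiply the ``non-top-order'' integrals on the right-hand sides in Lemma~\ref{MS1lem}(ii)--(v), all of those coefficients become small with $\varepsilon$.

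Next I would treat the cases one at a time, but always by the same recipe. Substitute the bounds above into the right-hand side of the relevant part of Lemma~\ref{MS1lem}. This produces, schematically, an inequality of the form $\int_M \Phi\,\gamma^s\,d\mu\le(\text{top-order term})+\sum_j\lambda_j\int_M\Phi_j\,\gamma^s\,d\mu+(\text{error in }\cg,\mu,\varepsilon)$, where each $\Phi_j$ is one of the summands already present in $\Phi$ on the left and each coefficient $\lambda_j$ is either $\theta$ or an absolute constant times a positive power of $\varepsilon$. For part (i) the only such term is $c\varepsilon\int_M|A|^6\gamma^s\,d\mu$; for part (iii) with $s\ge8$ the terms are $(\theta+c\varepsilon^{1/3})\int_M(|A|^2|\nabla_{(2)}A|^2+|\nabla A|^4+|A|^8)\gamma^s\,d\mu$, together with $c\varepsilon^{5/3}\big(\int_M|\nabla A|^3\gamma^{s/2}\,d\mu\big)^2$ and $c\varepsilon^{1/6}\big(\int_M|\nabla_{(2)}A|^{12/5}\gamma^{4s/5}\,d\mu\big)^{5/4}$, and similarly for (ii), (iv), (v). Now fix $\theta$, and then $\varepsilon$, small enough — depending only on $n$, $s$, $N$, and finally taking the minimum of the five thresholds — that every $\lambda_j\le\tfrac12$. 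Then each $\lambda_j\int_M\Phi_j\,\gamma^s\,d\mu$ moves to the left, the resulting factor $(1-\sum_j\lambda_j)^{-1}\le2$ is absorbed into the constant $c=c(s,\theta,N)$, and using $\varepsilon\le1$ to replace any leftover higher powers of $\varepsilon$ (respectively of $\vn{A}_{4,[\gamma>0]}^4$) by $\varepsilon$ yields exactly the stated inequality.

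The one place needing slightly more care is part (ii) with $s\ge4$: there the coefficient of the absorbable integral $\int_M(|\nabla A|^2|A|^3+|A|^7)\gamma^s\,d\mu$ is $c\vn{A}_{3,[\gamma>0]}^2+\theta$, which is not controlled by the $L^4$-smallness alone, so I would keep it explicit (as the statement does) and only use that it can be arranged $\le\tfrac12$, after which the absorption runs verbatim. The main — and essentially only — obstacle is bookkeeping: one must check, case by case, that every term produced on the right of Lemma~\ref{MS1lem} which is \emph{not} of top order and \emph{not} part of the $\cg$-error genuinely coincides, up to the universal $*$-constants, with one of the summands on the left, since otherwise it could not be absorbed. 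This is routine but must be done honestly for all of (i)--(v).
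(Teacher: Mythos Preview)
Your proposal is correct and is exactly the approach the paper takes: the text preceding the corollary says only ``Under an appropriate smallness condition, many terms can be absorbed, yielding the following Corollary,'' and gives no further argument. Your case-by-case bookkeeping (including the observation that in part (ii) for $s\ge4$ the coefficient $c\vn{A}_{3,[\gamma>0]}^2+\theta$ is kept explicit rather than being bounded via the $L^4$-hypothesis) matches what the paper leaves implicit.
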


Next we give a local refinement of Theorem \ref{myLZthm}.

\begin{prop}
\label{MS2prop}
Suppose $\gamma$ is as in \eqref{EQgamma}.
For any tensor $T$ normal along $f:M^n\rightarrow\R^{N}$, if $n=2$, we have
\begin{equation}
  \vn{T}^4_{\infty,[\gamma=1]}
\le c\vn{T}^{2}_{2,[\gamma>0]}\big( \vn{\nabla_{(2)}T}^2_{2,[\gamma>0]} +
(c_\gamma)^{4}\vn{T}^2_{2,[\gamma>0]} + \vn{T A^2}^2_{2,[\gamma>0]} \big)
\,,
\label{MS2}
\end{equation}
and if $n=4$, then we have
\begin{align}
	\vn{T}_{\infty,[\gamma=1]}^3
	\le c\vn{T}_{2,[\gamma>0]}\Big(
	\vn{\nabla_{(3)}T}_{2,[\gamma>0]}^2
	&+ \vn{T\,A^3}_{2,[\gamma>0]}^2
	+ (c_\gamma)^2\vn{A\nabla T}_{2,[\gamma>0]}^2
\notag\\&
\label{MS2n4}
	+ (c_\gamma)^2\vn{T\nabla A}_{2,[\gamma>0]}^2
	+ (c_\gamma)^4\vn{\nabla T}_{2,[\gamma>0]}^2
	+ (c_\gamma)^6\vn{T}_{2,[\gamma>0]}^2
             \Big)
\,,
\end{align}
where $c=c(n,N)$.

Assume $T=A$.
There exists an $\varepsilon_0 = \varepsilon_0(n,N)$ such that if
\[
  \vn{A}^n_{n,[\gamma>0]} \le \varepsilon_0
\]
we have for $n=2$:
\begin{equation*}
  \vn{A}^{4}_{\infty,[\gamma=1]}
    \le c\varepsilon_0
         \big(\vn{\nabla_{(2)}A}^{2}_{2,[\gamma>0]}
           + \varepsilon_0(c_\gamma)^4\big)\,,
\end{equation*}
and for $n=4$:
\begin{align}
\label{MS2secondstatement}
	\vn{A}_{\infty,[\gamma=1]}^3
	&\le c\vn{A}_{2,[\gamma>0]}\bigg(
             \vn{\nabla_{(3)}A}^2_{2,[\gamma>0]}
+ (c_\gamma)^4\big(
                    1 + \vn{A}_{4,[\gamma>0]}^4
                      + (c_\gamma)^4\mu_\gamma(f)
              \big)
             \bigg)
\,,
\end{align}
with $c=c(n,N,\varepsilon_0)$.
\end{prop}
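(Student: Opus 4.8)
The plan is to treat the two statements of Proposition \ref{MS2prop} separately: first the Sobolev-type inequalities \eqref{MS2} and \eqref{MS2n4} for a general normal tensor $T$, then the specialisation $T=A$ combined with Corollary \ref{MS1cor} and standard interpolation. For \eqref{MS2} I would feed $u=\gamma^{s/2}|T|$ (regularising $|T|$ to $\sqrt{|T|^2+\tau^2}$ and letting $\tau\downarrow0$ if needed) into Theorem \ref{myLZthm} together with the Michael--Simon inequality of Theorem \ref{TMmss}. Since Theorem \ref{TMmss} in dimension $2$ gains one derivative from $L^1$ to $L^2$, I would combine one application of Theorem \ref{myLZthm} (passing from an $L^p$-bound on $\nabla u$ to $L^\infty$) with one application of Theorem \ref{TMmss} to $|\nabla u|^2$ (passing from $L^2$-control of $\nabla_{(2)}u$ to the $L^p$-bound on $\nabla u$), plus a Gagliardo--Nirenberg interpolation to rebalance the intermediate $L^p$ norms, to obtain $\vn{T}^2_{\infty,[\gamma=1]}\le c\vn{u}_2\big(\vn{\nabla_{(2)}u}_2+\cdots\big)$. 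Expanding $\nabla u$ and $\nabla_{(2)}u$ by the product rule produces derivatives of $\gamma$, controlled by \eqref{EQgamma} (these are the origin of the $(c_\gamma)^4\vn{T}^2_{2}$ term), while rewriting the ambient Hessian of $|T|$ in terms of the intrinsic $\nabla_{(2)}T$ invokes the commutation formula \eqref{EQinterchangegeneral}, whose $T*A*A$ remainder is the source of $\vn{TA^2}^2_{2}$; the $\vec H$-terms in Theorems \ref{myLZthm}/\ref{TMmss} are simply carried along and absorbed into $|A|$-weighted norms of $T$ by Young's inequality, so no smallness is needed at this stage. For $n=4$ the same scheme is run, but since Theorem \ref{myLZthm} requires $p>n=4$ one cannot take $p=4$ and must carry an extra derivative: bounding $\vn{T}^3_{\infty,[\gamma=1]}$ through $\nabla_{(3)}u$ and expanding produces exactly the six terms listed in \eqref{MS2n4}, with the mixed terms $\vn{A\nabla T}_{2}$, $\vn{T\nabla A}_{2}$ arising from differentiating the $T*A*A$ correction.

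For the second part I set $T=A$. Then $\vn{TA^2}^2_{2}=\int_{[\gamma>0]}|A|^6$, and in dimension $4$ also $\vn{TA^3}^2_{2}=\int|A|^8$, $\vn{A\nabla T}^2_{2}=\vn{T\nabla A}^2_{2}=\int|A|^2|\nabla A|^2$, and $\vn{\nabla T}^2_{2}=\int|\nabla A|^2$ --- precisely the quantities that Corollary \ref{MS1cor} estimates once $\vn{A}^n_{n,[\gamma>0]}\le\varepsilon_0$. Applying Corollary \ref{MS1cor}(i) for $n=2$ and parts (i)--(v) for $n=4$, together with the standard interpolation inequalities for $\int|\nabla_{(j)}A|^2\gamma^s$ (obtained from the divergence theorem and Young's inequality) to eliminate the bare intermediate-derivative terms, collapses the right-hand side to the single top-order quantity $\int|\nabla_{(2)}A|^2\gamma^s$ (resp. $\int|\nabla_{(3)}A|^2\gamma^s$) plus powers of $c_\gamma$ times $\vn{A}_{4,[\gamma>0]}$ and $\mu_\gamma(f)$; the constant is allowed to depend on $\varepsilon_0$ so that the factors $1/(1-c\varepsilon_0^{1/3})$ produced by these absorptions are harmless. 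The overall $\varepsilon_0$-prefactor in the $n=2$ estimate is simply $\vn{A}^2_{2,[\gamma>0]}=\vn{A}^n_{n,[\gamma>0]}\le\varepsilon_0$, and the $\mu_\gamma(f)$ appearing in \eqref{MS2secondstatement} enters through Hölder's inequality, which is needed to convert the $L^2$-norms of $A$ on the finite-measure set $[\gamma>0]$ into the $L^4$-norms that Corollary \ref{MS1cor} controls in dimension four.

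The main obstacle is the $n=4$ case. Being forced onto $p>4$ in Theorem \ref{myLZthm} costs an extra derivative and a chain of Gagliardo--Nirenberg interpolations, each step of which multiplies the number of commutator terms (the $T*A*A$ correction and its derivatives) and cutoff terms ($\nabla_{(k)}\gamma$) that must be tracked and eventually matched against the exact list in \eqref{MS2n4} and Corollary \ref{MS1cor}. A related technical nuisance is that Corollary \ref{MS1cor} is phrased with explicit $\gamma^s$ weights whereas \eqref{MS2} and \eqref{MS2n4} are written with unweighted norms over $[\gamma>0]$, so in the second part the first step must in fact be re-run keeping $\gamma^s$ (and lower powers) throughout, with $s$ chosen large enough and coherently at every absorption. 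By comparison, the $n=2$ case is routine.
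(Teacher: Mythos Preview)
Your overall strategy matches the paper's: apply Theorem \ref{myLZthm} to a cut-off version of $T$, interpolate the intermediate $L^p$-norm of the gradient down to $L^2$ of a higher derivative, expand derivatives of $\gamma$ using \eqref{EQgamma}, and then for $T=A$ invoke Corollary \ref{MS1cor}. The paper's execution differs in two notable ways. First, it works with the \emph{tensor} $S=T\gamma^2$ (resp.\ $S=T\gamma^4$ when $n=4$) rather than the scalar $|T|\gamma^{s/2}$; this avoids the regularisation of $|T|$ and the Kato-type inequalities you would need. Second, and more substantively, the key interpolation step is not Michael--Simon applied to $|\nabla u|^2$ plus a separate Gagliardo--Nirenberg balancing, but a single self-referential integration-by-parts estimate: for $n=2$ one shows $\vn{\nabla S}_4\le c\vn{S}_\infty^{1/2}\vn{\nabla_{(2)}S}_2^{1/2}$, feeds this back into the output of Theorem \ref{myLZthm}, and absorbs the $\vn{S}_\infty$ factor from the right into the left. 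For $n=4$ the analogous chain is $\vn{\nabla S}_6^6\le c\vn{S}_\infty^4\vn{\nabla_{(3)}S}_2^2$ (via two IBP steps), applied with $p=6,\ \beta=2$ in Theorem \ref{myLZthm}. This self-absorption trick is what fixes the exponents cleanly and is worth knowing.

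One small misconception: the $\vn{TA^2}_2$ term in \eqref{MS2} does \emph{not} arise from the commutator \eqref{EQinterchangegeneral}. No interchange of covariant derivatives is needed anywhere in this proof. That term comes from the $\vn{\vec H\,S}_p$ contribution in Theorem \ref{myLZthm} (estimated via $|\vec H|\le c|A|$), together with the $|A|$ appearing in $|\nabla_{(2)}\gamma|$ from \eqref{EQgamma} when you expand $\nabla_{(2)}(T\gamma^2)$. Your proposal would still go through once you drop the commutator remark, but the bookkeeping is cleaner if you track the $\vec H$-term as the paper does rather than trying to absorb it early.
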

\begin{proof}
The proof proceeds in two parts: first we deal with the case where $n=2$.
Then we prove the statements for $n=4$.
In each part we will estimate an arbitrary tensor field
$S$, and then we will localise the estimate for $S$ by using a $\gamma$
function.  Precisely, we specialise the estimate for $S$ to $S = T\gamma^2$ in
the first part and $S = T\gamma^4$ in the second, taking care to factor out the
quantity $\vn{T}^2_{2,[\gamma>0]}$ to conclude our desired inequality.  Note
that for $n=2$ the result is in \cite{kuwert2002gfw} (except here we keep track
of $c_\gamma$, and in the relevant result from \cite{kuwert2002gfw} the
constant $c$ depends on $\gamma$).

Here, and until we deal with the case $n=4$, we leave $n$ as a free parameter.
This is because the proof below works for both $n=2$ and $n=3$.
Therefore, let us take $p=4$, $\beta = 2$ in Theorem \ref{myLZthm} to obtain
\begin{equation}
\vn{S}_\infty \le c\vn{S}^\frac{4-n}{n+4}_2
                   \big(
                        \vn{\nabla S}_4 + \vn{S\ \vec{H}}_4
                   \big)^\frac{2n}{n+4}.
\label{MS2e1}
\end{equation}
We now use integration by parts and the H\"older inequality to derive
\begin{align}
\vn{\nabla S}^4_4
 &\le \int_M S*(\nabla_{(2)}S\,|\nabla S|^2 + 2\nabla S*\nabla S*\nabla_{(2)}S)d\mu
\notag\\
 &\le c\vn{S}_\infty\vn{\nabla S}^2_4\vn{\nabla_{(2)}S}_2,\text{ so}
\notag\\
\vn{\nabla S}_4
 &\le c\vn{S}_\infty^\frac{1}{2} \vn{\nabla_{(2)}S}_2^\frac{1}{2}.
\label{MS2eBabyfirstinterpolation}
\end{align}
Combine inequality \eqref{MS2eBabyfirstinterpolation} with \eqref{MS2e1} and use Jensen's
inequality to obtain
\begin{equation}
\vn{S}_\infty \le c\vn{S}_2^\frac{4-n}{n+4}\big[
                   (\vn{S}^\frac{1}{2}_\infty\vn{\nabla_{(2)}S}_2^\frac{1}{2})^\frac{2n}{n+4}
                 + \vn{S\ \vec H}^\frac{2n}{n+4}_4\big].
\label{MS2e2}
\end{equation}
Using H\"older's inequality we estimate
\[
\vn{S\ \vec H}^\frac{2n}{n+4}_4
  \le \left(\vn{S^2}_\infty^\frac{1}{4}\vn{S^\frac{1}{2}\vec H}_4\right)^\frac{2n}{n+4}
  \le \vn{S}^\frac{n}{n+4}_\infty\vn{S^\frac{1}{2}\vec H}^\frac{2n}{n+4}_4,
\]
and combining this with \eqref{MS2e2} above we conclude
\begin{align}
\vn{S}^4_\infty
  &= \Big(\vn{S}^{1-\frac{n}{n+4}}_\infty\Big)^{n+4}
\notag\\
  &\le \Big(c\vn{S}^\frac{4-n}{n+4}_2\big(\vn{\nabla_{(2)}S}_2^\frac{n}{n+4}
                              + \vn{S^\frac{1}{2}\vec H}_4^\frac{2n}{n+4}\big)\Big)^{n+4}
\notag\\
  &\le c\vn{S}_2^{4-n}\big(\vn{\nabla_{(2)}S}^n_2 + \vn{S\ |\vec H|^2}^n_2\big).
\label{MS2e3}
\end{align}
We now turn our attention to localising the estimate for $S$.  As mentioned earlier, for this
purpose we set $S = T\gamma^2$. We first evaluate and estimate the second derivative term
$\vn{\nabla_{(2)}S}^2_2$:
\begin{align}
\vn{\nabla_{(2)}&S}^2_2 = \int_M |\nabla_{(2)}(T\gamma^2)|^2d\mu
\notag\\
&\le c\bigg(
	\int_M |\nabla_{(2)}T|^2\gamma^4d\mu + \int_M|\nabla T|^2|\nabla\gamma^2|^2d\mu
 + \int_M |T|^2|\nabla_{(2)}\gamma^2|^2d\mu
	\bigg)
\notag\\
&\le c\bigg(
	\int_M |\nabla_{(2)}T|^2\gamma^4d\mu + \int_M|\nabla T|^2|\nabla\gamma|^2\gamma^2d\mu
\notag\\
&\qquad
 + \int_M |T|^2\left[|\nabla_{(2)}\gamma|\gamma+|\nabla\gamma|^2\right]^2d\mu
	\bigg)
\notag\\
&\le c\bigg(
	\int_M |\nabla_{(2)}T|^2\gamma^4d\mu + (c_\gamma)^2\int_M|\nabla T|^2\gamma^2d\mu
\notag\\
&\qquad
 + (c_\gamma)^2\int_M |T|^2|A|^2\gamma^2d\mu
 + (c_\gamma)^4\int_{[\gamma>0]} |T|^2d\mu
	\bigg)\,.
\label{MS2e4}
\end{align}
We interpolate the first derivative term:
\begin{align*}
(c_\gamma)^2\int_M&|\nabla T|^2\gamma^2d\mu
\le (c_\gamma)^2\int_M|T|\ |\nabla_{(2)}T|\gamma^2d\mu
 + c(c_\gamma)^3 \int_M |T|\ |\nabla T|\gamma d\mu
\notag\\
&\le 
     \frac{1}{2}(c_\gamma)^2\int_M|\nabla T|^2\gamma^2d\mu
   + c\int_M |\nabla_{(2)}T|^2 \gamma^4 d\mu
   + c(c_\gamma)^4\int_{[\gamma>0]}|T|^2d\mu
\notag\intertext{and thus}
(c_\gamma)^2\int_M&|\nabla T|^2\gamma^2d\mu
\le c\int_M |\nabla_{(2)}T|^2 \gamma^4 d\mu
   + c(c_\gamma)^4\int_{[\gamma>0]}|T|^2d\mu
\,.
\end{align*}
Inserting this result into \eqref{MS2e4}, and estimating
\[
   c(c_\gamma)^2\int_M |T|^2|A|^2\gamma^2d\mu
\le
   c\int_M |T|^2|A|^4\gamma^4d\mu
   + c(c_\gamma)^4\int_{[\gamma>0]}|T|^2d\mu
\]
we obtain
\begin{equation}
\vn{\nabla_{(2)}S}^2_2 \le c\int_{[\gamma>0]}|\nabla_{(2)}T|^2\,d\mu
   + c\int_M |T|^2|A|^4\gamma^4d\mu
   + c(c_\gamma)^4\int_{[\gamma>0]}|T|^2d\mu
\,.
\label{MS2e5}
\end{equation}
Combining this with our estimate for $\vn{S}_\infty$ earlier, inequality
\eqref{MS2e3}, gives
\begin{align}
\vn{S}^4_\infty &\le c\vn{S}^{4-n}_2\big(\vn{\nabla_{(2)}T}^n_{2,[\gamma>0]}
           + (c_\gamma)^{2n}\vn{T}^n_{2,[\gamma>0]} + \vn{S\ H^2}_2^n + \vn{T A^2\gamma^2}_2^n\big)
\notag\\
&\le c\vn{T}^{4-n}_{2,[\gamma>0]}\big( \vn{\nabla_{(2)}T}^n_{2,[\gamma>0]} +
(c_\gamma)^{2n}\vn{T}^n_{2,[\gamma>0]} + \vn{T A^2}^n_{2,[\gamma>0]} \big)
\,.
\label{MS2e6}
\end{align}
Estimating $\vn{T}^4_{\infty,[\gamma=1]} \le \vn{S}^4_\infty$ proves \eqref{MS2}.

Now set $T=A$ in \eqref{MS2}.

For $n=2$, Lemma \ref{MS1lem} (i) implies
\begin{align*}
\int_M|A|^6\gamma^4d\mu
 &\le c\vn{A}^2_{2,[\gamma>0]}\big(\vn{\nabla_{(2)}A}^2_{2,[\gamma>0]}
 + \vn{A\gamma^\frac{2}{3}}^6_6\big)
 + c(c_\gamma)^4\vn{A}^4_{2,[\gamma>0]},
\intertext{and absorbing on the left we obtain}
\int_M |A|^6\gamma^4d\mu
 &\le c \vn{A}^2_{2,[\gamma>0]}\big(\vn{\nabla_{(2)}A}^2_{2,[\gamma>0]}
                                + (c_\gamma)^4\vn{A}^2_{2,[\gamma>0]}\big)
\,.
\end{align*}
Inserting this into \eqref{MS2e6} gives the second statement for $n=2$.

For the $n=4$ inequalities, we proceed similarly. We first claim
\begin{equation}
\vn{S}_\infty
	\le c\vn{S}_2^\frac13(\vn{\nabla_{(3)}S}_2^\frac23
+ \vn{S^\frac13\vec{H}}_{6}^2)
\,.
\label{EQgoal}
\end{equation}
In order to prove \eqref{EQgoal}, we need some auxilliary estimates.
First, we calculate 
\begin{align}
	\int_M |\nabla S|^6 
		      &\le c\int_M |S|\,|\nabla S|^4\,|\nabla_{(2)}S|\,d\mu
		\notag      \\
		      &\le c\vn{S}_\infty \vn{\nabla S}_6^4\bigg(\int_M |\nabla_{(2)}S|^3\,d\mu\bigg)^\frac13
\notag\intertext{so}
\vn{\nabla S}_6^6 &\le c\vn{S}_\infty^3\vn{\nabla_{(2)}S}_3^3
\,.
\label{EQhelper1}
\end{align}
We also need
\begin{align*}
	\int_M |\nabla_{(2)}S|^3\,d\mu
	&\le c\int_M |\nabla S|\,|\nabla_{(2)}S|\,|\nabla_{(3)}S|\,d\mu
	\\
	&\le \frac12\vn{\nabla_{(2)}S}_3^3
	+ c\int_M |\nabla S|^\frac32\,|\nabla_{(3)}S|^\frac32\,d\mu
\intertext{so}
\int_M |\nabla_{(2)}S|^3\,d\mu
&\le c\int_M |\nabla S|^\frac32\,|\nabla_{(3)}S|^\frac32\,d\mu
\\
&\le \frac1{2c\vn{S}_\infty^3}\int_M |\nabla S|^6\,d\mu
   + c\vn{S}_\infty\int_M |\nabla_{(3)}S|^2\,d\mu\,.
\end{align*}
(Note that if $\vn{S}_\infty = 0$ then the estimate is trivially true, and so we assume this is not the case.)
Combining with \eqref{EQhelper1} and absorbing we find
\[
\vn{\nabla S}_6^6 \le c\vn{S}_\infty^4\vn{\nabla_{(3)}S}_2^2
\,.
\]
Now applying Theorem \ref{myLZthm} yields
\begin{align*}
	\vn{S}_\infty
	&\le c\vn{S}_\beta^{1-\alpha}(\vn{\nabla S}_{6} + \vn{\vec{H}\,S}_{6})^\alpha
	\\
	&\le c\vn{S}_\beta^{1-\alpha}(\vn{S}_\infty^\frac23\vn{\nabla_{(3)}S}_2^\frac13
	  + \vn{S}_\infty^\frac23\vn{S^\frac13\vec{H}}_{6})^\alpha
	  \intertext{so}
\vn{S}_\infty
	&\le c\vn{S}_\beta^{\frac{3-3\alpha}{3-2\alpha}}(\vn{\nabla_{(3)}S}_2^\frac13
+ \vn{S^\frac13\vec{H}}_{6})^\frac{3\alpha}{3-2\alpha}
\end{align*}
where
$\alpha^{-1} = 1 + (\frac{1}{n} - \frac{1}{6})\beta$.
Since $n=4$, $\alpha^{-1} = \frac{12 + \beta}{12}$, and
\[
	\frac{3\alpha}{3-2\alpha}
	= \frac{12}{12 + \beta}\frac{3}{3 - 2\frac{12}{12+\beta}}
	= \frac{36}{36 + 3\beta - 24}
	= \frac{12}{4 + \beta}
\]
so in particular if $\beta = 2$ then $3\alpha/(3-2\alpha) = 2$ or $\alpha = 6/7$.
We also note that $\frac{3-3\alpha}{3-2\alpha} = \frac13$.
This proves the estimate \eqref{EQgoal}.

Now we set $S = T\gamma^4$ and calculate
\begin{align*}
\int_M |\nabla_{(3)}(T\gamma^4)|^2\,d\mu
 &\le c\int_M |\nabla_{(3)}T|^2\,\gamma^8\,d\mu
+ c(c_\gamma)^2\int_M |\nabla_{(2)}T|^2\,\gamma^6\,d\mu
\\&\quad
+ c(c_\gamma)^2\int_M |\nabla T|^2\,((c_\gamma^2 + |A|^2)\gamma^2 + c_\gamma^2)\gamma^4\,d\mu
\\&\quad
+ c(c_\gamma)^2\int_M |T|^2\,(
                   (c_\gamma^4 + c_\gamma^2|A|^2 + |A|^4 + |\nabla A|^2)\gamma^4
		   + (c_\gamma^2(c_\gamma^2 + |A|^2))\gamma^2
		 + c_\gamma^4)\gamma^2\,d\mu
\\
 &\le c\int_M |\nabla_{(3)}T|^2\,\gamma^8\,d\mu
+ c(c_\gamma)^2\int_M |\nabla_{(2)}T|^2\,\gamma^6\,d\mu
+ c(c_\gamma)^2\int_M |\nabla T|^2\,|A|^2\,\gamma^6\,d\mu
\\&\quad
+ c(c_\gamma)^2\int_M |T|^2\,
                   |\nabla A|^2\,\gamma^6\,d\mu
+ c(c_\gamma)^2\int_M |T|^2\,
                   |A|^4\,\gamma^6\,d\mu
\\&\quad
+ c(c_\gamma)^4\int_M |\nabla T|^2\,\gamma^4\,d\mu
+ c(c_\gamma)^4\int_M |T|^2\,
                   |A|^2\,\gamma^4\,d\mu
+ c(c_\gamma)^6\int_M |T|^2\,
		 \gamma^2\,d\mu
\\
 &\le c\int_M |\nabla_{(3)}T|^2\,\gamma^8\,d\mu
+ c(c_\gamma)^2\int_M |\nabla_{(2)}T|^2\,\gamma^6\,d\mu
+ c(c_\gamma)^2\int_M |\nabla T|^2\,|A|^2\,\gamma^6\,d\mu
\\&\quad
+ c(c_\gamma)^2\int_M |T|^2\,
                   |\nabla A|^2\,\gamma^6\,d\mu
+ c(c_\gamma)^2\int_M |T|^2\,
                   |A|^4\,\gamma^6\,d\mu
\\&\quad
+ c(c_\gamma)^4\int_M |\nabla T|^2\,\gamma^4\,d\mu
+ c(c_\gamma)^6\int_M |T|^2\,
		 \gamma^2\,d\mu
\,.
\end{align*}
Note that
\begin{align*}
	(c_\gamma)^2\int_M |\nabla_{(2)} T|^2\,\gamma^6\,d\mu
&\le c(c_\gamma)^2\int_M |\nabla T|\,|\nabla_{(3)}T|\,\gamma^6\,d\mu
	+ c(c_\gamma)^3\int_M |\nabla T|\,|\nabla_{(2)} T|\,\gamma^5\,d\mu
  \\
&\le
\frac12(c_\gamma)^2\int_M |\nabla_{(2)} T|^2\,\gamma^6\,d\mu
   + c(c_\gamma)^2\int_M |\nabla T|\,|\nabla_{(3)}T|\,\gamma^6\,d\mu
  + c(c_\gamma)^4\int_M |\nabla T|^2\,\gamma^4\,d\mu
\end{align*}
so that
\[
	(c_\gamma)^2\int_M |\nabla_{(2)} T|^2\,\gamma^6\,d\mu
\le
     \int_M |\nabla_{(3)}T|^2\,\gamma^8\,d\mu
  + (c_\gamma)^4\int_M |\nabla T|^2\,\gamma^4\,d\mu
  \,.
\]
This refines the above to
\begin{align*}
\int_M |\nabla_{(3)}(T\gamma^4)|^2\,d\mu
 &\le c\int_M |\nabla_{(3)}T|^2\,\gamma^8\,d\mu
+ c(c_\gamma)^2\int_M |\nabla T|^2\,|A|^2\,\gamma^6\,d\mu
\\&\quad
+ c(c_\gamma)^2\int_M |T|^2\,
                   |\nabla A|^2\,\gamma^6\,d\mu
+ c(c_\gamma)^2\int_M |T|^2\,
                   |A|^4\,\gamma^6\,d\mu
\\&\quad
+ c(c_\gamma)^4\int_M |\nabla T|^2\,\gamma^4\,d\mu
+ c(c_\gamma)^6\int_M |T|^2\,
		 \gamma^2\,d\mu
\,.
\end{align*}
Combining this with \eqref{EQgoal} and then cubing everything yields
\begin{align*}
\vn{T\gamma^4}_\infty^3
&\le c\vn{T\gamma^4}_2\Big(
  \int_M |\nabla_{(3)}T|^2\,\gamma^8\,d\mu
     + \int_M |T|^2|A|^6\,\gamma^8\,d\mu
\\&\qquad
+ (c_\gamma)^2\int_M |\nabla T|^2\,|A|^2\,\gamma^6\,d\mu
+ (c_\gamma)^2\int_M |T|^2\,
                   |\nabla A|^2\,\gamma^6\,d\mu
\\&\qquad
+ (c_\gamma)^2\int_M |T|^2\,
                   |A|^4\,\gamma^6\,d\mu
+ (c_\gamma)^4\int_M |\nabla T|^2\,\gamma^4\,d\mu
+ (c_\gamma)^6\int_M |T|^2\,
		 \gamma^2\,d\mu
             \Big)
\,.
\end{align*}
Using the definition of $\gamma$ we have
\begin{align*}
	\vn{T}_{\infty,[\gamma=1]}^3
	\le c\vn{T}_{2,[\gamma>0]}\Big(
	\vn{\nabla_{(3)}T}_{2,[\gamma>0]}^2
	&+ \vn{T\,A^3}_{2,[\gamma>0]}^2
	+ (c_\gamma)^2\vn{A\nabla T}_{2,[\gamma>0]}^2
\\&
	+ (c_\gamma)^2\vn{T\nabla A}_{2,[\gamma>0]}^2
	+ (c_\gamma)^4\vn{\nabla T}_{2,[\gamma>0]}^2
	+ (c_\gamma)^6\vn{T}_{2,[\gamma>0]}^2
             \Big)
\,.
\end{align*}
Note that we interpolated one term.

In the particular case where $T=A$ we find
\begin{align*}
	\vn{A}_{\infty,[\gamma=1]}^3
	&\le c\vn{A}_{2,[\gamma>0]}\bigg(
  \int_M |\nabla_{(3)}A|^2\,\gamma^8\,d\mu
+ \int_M (|\nabla A|^4 + |A|^8)\,\gamma^8\,d\mu
\\&\hskip+4cm
+ (c_\gamma)^4\big(
                    1 + \vn{A}_{4,[\gamma>0]}^4
                      + (c_\gamma)^4\mu_\gamma(f) + (c_\gamma)^2\vn{A}_{2,[\gamma>0]}^2
              \big)
             \bigg)
\,.
\end{align*}
When $\vn{A}^4_{4,[\gamma>0]}$ is small, we may use Corollary \ref{MS1cor} (iii) to
absorb the second integral on the right, and conclude
\begin{align*}
	\vn{A}_{\infty,[\gamma=1]}^3
	&\le c\vn{A}_{2,[\gamma>0]}\bigg(
             \vn{\nabla_{(3)}A}^2_{2,[\gamma>0]}
+ (c_\gamma)^4\big(
                    1 + \vn{A}_{4,[\gamma>0]}^4
                      + (c_\gamma)^4\mu_\gamma(f) + (c_\gamma)^2\vn{A}_{2,[\gamma>0]}^2
              \big)
             \bigg)
\,.
\end{align*}
\end{proof}

The Lifespan Theorem is proved using an alternative that relies on being able
to, in a weak sense, preserve the assumption
\[
	\int_{[\gamma>0]} |A|^n\,d\mu < \varepsilon_0
\]
at later times.
A key difficulty is that the flow lives naturally in the $L^2$ heirarchy, and
so does not directly control the $L^n$ norm of curvature.
This in turn introduces difficulties in obtaining pointwise control of curvature.
For $n=2$ this does not cause any issue.
For $n=4$ the same Sobolev inequalities can not apply.
Nevertheless we are able to use those proved above to obtain pointwise control
in this case as well.

We begin with the $L^2$-control.

\begin{prop}
\label{p:prop44}
Let $n\in\{2,4\}$.
Suppose $f:M^n\times[0,T^*]\rightarrow\R^{N}$ evolves by
\eqref{EQwf} and $\gamma$ is a cutoff function as in \eqref{EQgamma}.
Then there is a universal $\varepsilon_0 = \varepsilon_0(N)$ such that if
\begin{equation}
\label{eq9}
\varepsilon = \sup_{[0,T^*]}\int_{[\gamma>0]}|A|^nd\mu\le\varepsilon_0
\end{equation}
then for any $t\in[0,T^*]$ we have
\begin{align*}
&\int_{[\gamma=1]} |A|^2 \,d\mu\ +
 \int_0^t\int_{[\gamma=1]} (|\nabla_{(2)}A|^2 + |A|^2|\nabla A|^2 + |A|^6)\,
d\mu d\tau\\
&\le \int_{[\gamma>0]}|A|^2d\mu\bigg|_{t=0}
	       + ct(\cg)^{6-n}
                      \Big(
                            1 + (n-2)(4-n)[(\cg)^3\mu(f_0)]^\frac13
                              + (n-2)(n-3)[(\cg)^4\mu(f_0)]^\frac12
                      \Big)\varepsilon^\frac{2}{n}
	       \,,
\end{align*}
where $c=c(n,N)$.
\end{prop}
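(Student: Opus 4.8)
The plan is to run a localised $L^2$-energy estimate for the second fundamental form and to close it using the interpolation estimates of Corollary~\ref{MS1cor} together with the area bound of Lemma~\ref{lemmapreservationarea}. Fix $s\ge4$ (e.g.\ $s=4$) and a small $\delta\in(0,1)$, and start from the $k=0$ case of Lemma~\ref{LMenergyhighervanillaconstantricci}:
\[
\rD{}{t}\int_M|A|^2\gamma^s\,d\mu + (2-\delta)\int_M|\nabla_{(2)}A|^2\gamma^s\,d\mu
 \le c(\cg)^4\int_M|A|^2\gamma^{s-4}\,d\mu
 + c\int_M A*\big(P_3^2(A)+P_5^0(A)\big)\gamma^s\,d\mu.
\]
Bounding the last integrand pointwise by $c\big(|A|^3|\nabla_{(2)}A|+|A|^2|\nabla A|^2+|A|^6\big)$, applying Young's inequality to $|A|^3|\nabla_{(2)}A|$ with a small constant (the resulting multiple of $\int_M|\nabla_{(2)}A|^2\gamma^s$ is absorbed on the left), and using $\gamma^{s-4}\le1$, reduces everything to
\[
\rD{}{t}\int_M|A|^2\gamma^s\,d\mu + c_1\int_M|\nabla_{(2)}A|^2\gamma^s\,d\mu
 \le c(\cg)^4\int_{[\gamma>0]}|A|^2\,d\mu
 + c\int_M\big(|A|^2|\nabla A|^2+|A|^6\big)\gamma^s\,d\mu,
\]
with $c_1=c_1(n,N)>0$. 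The remaining task, in each dimension, is to handle the two terms on the right.

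For $n=2$, Corollary~\ref{MS1cor}(i) gives $\int_M(|A|^2|\nabla A|^2+|A|^6)\gamma^s\,d\mu\le c\varepsilon\big(\int_M|\nabla_{(2)}A|^2\gamma^s\,d\mu+(\cg)^4\big)$. Choosing $\varepsilon_0$ small enough, the $\int_M|\nabla_{(2)}A|^2\gamma^s$ part is absorbed on the left; and since the same estimate shows $\int_M(|A|^2|\nabla A|^2+|A|^6)\gamma^s\,d\mu$ is itself bounded by a small fraction of $\int_M|\nabla_{(2)}A|^2\gamma^s\,d\mu$ plus $c\varepsilon(\cg)^4$, that term may also be moved onto the left. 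Together with $\int_{[\gamma>0]}|A|^2\,d\mu\le\varepsilon$, this yields
\[
\rD{}{t}\int_M|A|^2\gamma^s\,d\mu + c\int_M\big(|\nabla_{(2)}A|^2+|A|^2|\nabla A|^2+|A|^6\big)\gamma^s\,d\mu \le c(\cg)^4\varepsilon.
\]

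For $n=4$ the scheme is identical with Corollary~\ref{MS1cor}(ii) in place of (i): there the quantity to be absorbed carries the free parameter $\theta$, so the absorption needs no further smallness of $\varepsilon_0$, and the remainder is $c(\cg)^2\vn{A}_{4,[\gamma>0]}^4\le c(\cg)^2\varepsilon$. The genuinely new point is that the hypothesis controls only $\vn{A}_{4,[\gamma>0]}$, not $\vn{A}_{2,[\gamma>0]}$, so the term $c(\cg)^4\int_{[\gamma>0]}|A|^2\,d\mu$ must be treated by H\"older's inequality together with the area monotonicity of Lemma~\ref{lemmapreservationarea} (available precisely because $n\le4$):
\[
\int_{[\gamma>0]}|A|^2\,d\mu \le \Big(\int_{[\gamma>0]}|A|^4\,d\mu\Big)^{\frac12}\mu(f_t)^{\frac12} \le \varepsilon^{\frac12}\mu(f_0)^{\frac12}.
\]
This produces
\[
\rD{}{t}\int_M|A|^2\gamma^s\,d\mu + c\int_M\big(|\nabla_{(2)}A|^2+|A|^2|\nabla A|^2+|A|^6\big)\gamma^s\,d\mu \le c\big((\cg)^4\mu(f_0)^{\frac12}+(\cg)^2\big)\varepsilon^{\frac12}.
\]

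It remains to integrate over $[0,t]$, discard the time-$t$ boundary term on the left after noting $\int_M|A|^2\gamma^s\,d\mu\big|_{t=0}\le\int_{[\gamma>0]}|A|^2\,d\mu\big|_{t=0}$, and restrict all left-hand spatial integrals to $[\gamma=1]$, where $\gamma^s=1$. Since $\varepsilon^{2/n}=\varepsilon$ for $n=2$ and $\varepsilon^{2/n}=\varepsilon^{1/2}$ for $n=4$, and $(n-2)(4-n)=(n-2)(n-3)=0$ when $n=2$ while $(n-2)(4-n)=0$, $(n-2)(n-3)=2$ when $n=4$, the two differential inequalities above integrate to precisely the asserted estimate; the $[(\cg)^3\mu(f_0)]^{1/3}$ contribution, absent for $n\in\{2,4\}$, is the one that occurs for $n=3$, where the H\"older exponent is $\tfrac23$ rather than $\tfrac12$. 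The parameters $s,\delta$ (and $\theta$ for $n=4$) are dictated by $n$ alone, so the final constant depends only on $n,N$ and $\varepsilon_0$ only on $N$, after taking the minimum of the admissible thresholds over $n\in\{2,4\}$. The step I expect to be the actual obstacle is the input from Corollary~\ref{MS1cor}, and behind it Lemma~\ref{MS1lem}: absorbing the cubic and sextic curvature terms completely into $\int_M|\nabla_{(2)}A|^2\gamma^s$ under only $L^n$-smallness of $A$. In dimension two this is classical \cite{kuwert2002gfw}, but in dimension four the Michael--Simon Sobolev inequality lacks the convenient exponent, so a longer interpolation chain is needed and the $L^4$ hypothesis must be threaded through H\"older and the area bound to recover the missing $L^2$ control; the rest is careful bookkeeping of absorption constants.
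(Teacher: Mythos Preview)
Your proof is correct and follows essentially the same route as the paper: start from the $k=0$ case of Lemma~\ref{LMenergyhighervanillaconstantricci}, estimate the $P$-style terms pointwise and use Young's inequality, then apply Corollary~\ref{MS1cor}(i) (for $n=2$) or (ii) (for $n=4$) to absorb the $|A|^2|\nabla A|^2+|A|^6$ terms into $\int_M|\nabla_{(2)}A|^2\gamma^s\,d\mu$, and integrate. The paper likewise handles the $(\cg)^4\int_{[\gamma>0]}|A|^2\,d\mu$ term for $n=4$ via H\"older and the area monotonicity $\mu_\gamma(f_t)\le\mu(f_t)\le\mu(f_0)$, exactly as you do.
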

\begin{proof}
The idea of the proof is to integrate Lemma
\ref{LMenergyhighervanillaconstantricci}, and then use the multiplicative
Sobolev inequality Lemma \ref{MS1lem}.  This will introduce a multiplicative factor of
$\vn{A}_{n,[\gamma>0]}$ in front of several integrals, which we can then absorb on the left.
The proof for $n=2$ is the same as that in \cite{kuwert2002gfw}.
Therefore here we give only the proof for $n=4$.

Setting $k=0$ and $s=4$ in Lemma \ref{LMenergyhighervanillaconstantricci} we have
\begin{align*}
&\rD{}{t}\int_M |A|^2\gamma^4d\mu
 + (2-\theta)\int_M |\nabla_{(2)}A|^2\gamma^4d\mu
\\
&\qquad
\le c(\cg)^4\int_{[\gamma>0]}|A|^2\,d\mu
            + c\int_M \left([P_3^{2}(A)+P_5^0(A)]*A\right)\gamma^{4} d\mu.
\end{align*}
We estimate the $P$-style terms:
\begin{align}
\int_M\left([P_3^{2}(A)+P_5^0(A)]*A\right)\gamma^{4} d\mu
&\le c\int_M\Big(\big[|A|^2\cdot|\nabla_{(2)}A|
                + |\nabla A|^2|A|+|A|^5\big]|A|\Big)\gamma^{4} d\mu
\notag\\&\le
     c\int_M\big[|A|^3|\nabla_{(2)}A|
                + |\nabla A|^2|A|^2+|A|^6\big]\gamma^{4} d\mu
\notag\\&\le
     \theta_0\int_M |\nabla_{(2)}A|^2\gamma^4d\mu
   +  c\int_M (|A|^6 + |\nabla A|^2|A|^2)\gamma^{4} d\mu.
\notag
\intertext{
We use Corollary \ref{MS1cor} (ii) to estimate the second integral and obtain}
\int_M\left([P_3^{2}(A)+P_5^0(A)]*A\right)\gamma^{4} d\mu
	&\le
	 \theta\int_M |\nabla_{(2)}A|^2\,\gamma^s\,d\mu
	   + (\cg)^2\vn{A}_{4,[\gamma>0]}^4\,,
\label{prop44eq1}
\end{align}
We add the integrals $\int_M |A|^6 \gamma^4d\mu$ and $\int_M |\nabla A|^2|A|^2\gamma^4d\mu$
to the estimate of Lemma \ref{LMenergyhighervanillaconstantricci} (with $k=0$, $s=4$) and find
\begin{align*}
&\rD{}{t}\int_M |A|^2\gamma^4d\mu
 + (2-\theta)\int_M \big(|\nabla_{(2)}A|^2+|A|^2|\nabla A|^2+|A|^6\big)\gamma^4 d\mu
\\
&\qquad \le c(\cg)^4\int_{[\gamma>0]} |A|^2d\mu
     + c\int_M \big(|A|^2|\nabla A|^2+|A|^6\big)\gamma^4 d\mu
     + c\int_M \left([P_3^{2}(A)+P_5^0(A)]*A\right)\gamma^{4} d\mu
\\
&\qquad \le c(\cg)^4\int_{[\gamma>0]} |A|^2d\mu
+ \theta\int_M |\nabla_{(2)}A|^2\,\gamma^s\,d\mu
	   + (\cg)^2\vn{A}_{4,[\gamma>0]}^4\,,
\intertext{
which upon absorbing and choosing $\theta$ small yields
}
&\rD{}{t}\int_M |A|^2\gamma^4d\mu
 + \int_M \big(|\nabla_{(2)}A|^2+|A|^2|\nabla A|^2+|A|^6\big)\gamma^4 d\mu
\le c(\cg)^2\vn{A}_{4,[\gamma>0]}^2
     \Big(
      (\cg)^2\mu_\gamma(f_t)^\frac12 + \vn{A}_{4,[\gamma>0]}^2
     \Big)
\,.
\end{align*}
Integrating, we have
\begin{align*}
&\int_{[\gamma=1]} |A|^2 \,d\mu\ +
 \int_0^t\int_{[\gamma=1]} (|\nabla_{(2)}A|^2 + |A|^2|\nabla A|^2 + |A|^6)\,
d\mu d\tau\\
&\le \int_{[\gamma>0]}|A|^2d\mu\bigg|_{t=0}
	       + ct(\cg)^{6-n}
                      \Big(
                            1 + (n-2)(4-n)[(\cg)^3\mu(f_0)]^\frac13
                              + (n-2)(n-3)[(\cg)^4\mu(f_0)]^\frac12
                      \Big)\varepsilon^\frac{2}{n}
	       \,,
\end{align*}
where we have incorporated the three cases into one statement, and used
$\varepsilon\le1$, $\mu_\gamma(f_t) \le \mu(f_t) \le \mu(f_0)$,
$[\gamma=1]\subset[\gamma>0]$ and $0\le\gamma\le1$.
\end{proof}

\begin{rmk}
It is possible to proceed as in \cite{metzger2013willmore} and prove a bound
directly for $\mu_\gamma(f_t)$ in terms of $\mu_\gamma(f_0)$, under the
smallness hypothesis \eqref{eq9}. However this yields a bound exponential in
time, which is quickly worse than the simple but uniform in time bound used
above. It is an interesting open question on how to control the area locally
uniformly in time without resorting to this crude estimate.
In order to overcome this issue we prove the following estimate for
the scale-invariant $\vn{\nabla A}_{2,\gamma^s}^2 + \vn{A}_{4,\gamma^s}^4$ directly.
\end{rmk}

\begin{prop}
\label{newl4}
Suppose $f:M^4\times[0,T^*]\rightarrow\R^{N}$ evolves by
\eqref{EQwf} and $\gamma$ is a cutoff function as in \eqref{EQgamma}.
Then there is a universal $\varepsilon_0 = \varepsilon_0(N)$ such that if
\begin{equation*}
\varepsilon = \sup_{[0,T^*]}\int_{[\gamma>0]}|A|^4+|\nabla A|^2d\mu\le\varepsilon_0
\end{equation*}
then for any $t\in[0,T^*]$ we have
\begin{align*}
&\int_{[\gamma=1]} |A|^4+|\nabla A|^2 \,d\mu\ +
 \int_0^t\int_{[\gamma=1]} (|\nabla_{(3)}A|^2 + |\nabla_{(2)}A|^2|A|^2 + |\nabla A|^2|A|^4 + |\nabla A|^4 + |A|^8)\,
d\mu d\tau\\
&\le \int_{[\gamma>0]}|A|^4+|\nabla A|^2d\mu\bigg|_{t=0}
	       + ct(\cg)^4
                      \varepsilon_0
	       \,,
\end{align*}
where $c$ depends only on $\cg$ and $N$.
\end{prop}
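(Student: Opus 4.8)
The plan is to establish, for a fixed large exponent $s$ (say $s\ge16$), the differential inequality
\begin{equation*}
\frac{d}{dt}\int_M\big(|A|^4+|\nabla A|^2\big)\gamma^s\,d\mu
+\int_M\big(|\nabla_{(3)}A|^2+|\nabla_{(2)}A|^2|A|^2+|\nabla A|^2|A|^4+|\nabla A|^4+|A|^8\big)\gamma^s\,d\mu
\le c(\cg)^4\varepsilon_0\,,
\end{equation*}
and then to integrate it over $[0,t]$, using $\chi_{[\gamma=1]}\le\gamma^s\le\chi_{[\gamma>0]}$, $0\le\gamma\le1$ and $\varepsilon_0\le1$. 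As in Proposition \ref{p:prop44} the argument is an interpolation-and-absorption scheme: first produce a crude evolution inequality with one negative dissipation term and many nonlinear errors, then absorb the errors into the dissipation at the cost of a factor coming from the smallness hypothesis.

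I would first treat the two summands of the energy separately. For $\int_M|\nabla A|^2\gamma^s\,d\mu$ apply Lemma \ref{LMenergyhighervanillaconstantricci} with $k=1$, obtaining the dissipation $-(2-\delta)\int_M|\nabla_{(3)}A|^2\gamma^s\,d\mu$ together with the cutoff error $c(\cg)^6\int_M|A|^2\gamma^{s-6}\,d\mu$ and the term $c\int_M\nabla A*\big(P_3^3(A)+P_5^1(A)\big)\gamma^s\,d\mu$. For $\int_M|A|^4\gamma^s\,d\mu$ differentiate directly, using the evolution equation of Lemma \ref{LMevoforA}, the variations of $d\mu$ and $g^{ij}$ from Lemma \ref{LMevolutionequations}, and $|\partial_t\gamma|\le c\,\cg\,|\BF|\le c\,\cg\big(|\nabla_{(2)}A|+|A|^3\big)$; two integrations by parts in the leading term $-4\int_M|A|^2\langle A,(\Delta^\perp)^2A\rangle\gamma^s\,d\mu$ extract a negative multiple $-c_0\int_M|\nabla_{(2)}A|^2|A|^2\gamma^s\,d\mu$ with $c_0>0$ fixed, and all remaining terms are $P$-type in $A$ or cutoff-derivative remainders. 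Adding the two inequalities and then adding $\int_M\big(|\nabla A|^2|A|^4+|\nabla A|^4+|A|^8+|\nabla_{(2)}A|^2|A|^2\big)\gamma^s\,d\mu$ to both sides produces the full good combination above on the left.

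It remains to absorb the right-hand side. After Young's inequality every remaining term has total weight $8$, counting $|A|,\nabla A,\nabla_{(2)}A,\nabla_{(3)}A$ as $1,2,3,4$ and each $\cg$ as $1$. The purely-curvature weight-$8$ terms are controlled with Corollary \ref{MS1cor}(ii) and, crucially, Corollary \ref{MS1cor}(v): these multiplicative Sobolev inequalities carry \emph{no} factor of $\mu_\gamma(f)$, and they successively reduce $\int_M|A|^8\gamma^s$ and $\int_M|A|^4|\nabla A|^2\gamma^s$ to $\int_M|\nabla A|^4\gamma^s$ and then to $\int_M|\nabla_{(2)}A|^2|A|^2\gamma^s$, each step costing only a power of $\vn{A}_{4,[\gamma>0]}\le\varepsilon_0^{1/4}$ and leaving a residual of the form $c(\cg)^4\varepsilon_0$. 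For the lower-weight errors carrying a positive power of $\cg$ — the term $c(\cg)^6\int_M|A|^2\gamma^{s-6}\,d\mu$ and the $\cg$-weighted contributions of $\partial_t\gamma$ — I would never split off a bare constant, which would introduce a factor $\mu_\gamma(f)$; instead I would estimate by H\"older \emph{against} the smallness, for instance $(\cg)^a\int_M|A|^b\gamma^{s'}\,d\mu\le(\cg)^a\vn{A}_{4,[\gamma>0]}^{\,b-4}\int_M|A|^4\gamma^s\,d\mu$ for $b\ge4$, absorbed into $\int_M|A|^4\gamma^s\,d\mu$ on the left, and interpolating the weight-$2$ contribution against $\int_M|A|^8\gamma^s\,d\mu$ and $\int_M|A|^4\gamma^s\,d\mu$. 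Taking $s$ large guarantees all the powers of $\gamma$ needed here and inside the proof of Lemma \ref{LMenergyhighervanillaconstantricci}.

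The main difficulty is the bookkeeping of this absorption. One must verify that once Corollary \ref{MS1cor}(v) has returned a non-small multiple of $\int_M|\nabla_{(2)}A|^2|A|^2\gamma^s\,d\mu$ to the right-hand side, the net coefficient of that integral on the left stays strictly positive; this is why $\int_M|\nabla_{(2)}A|^2|A|^2\gamma^s\,d\mu$, which also comes from the $|A|^4$-evolution with the definite constant $c_0$, is added back to both sides, and why the free parameters $\delta$ and $\theta$ are kept small. The conceptual point — and the reason for working with the scale-invariant quantity $\vn{\nabla A}_{2,\gamma^s}^2+\vn{A}_{4,\gamma^s}^4$ rather than with $\vn{A}_{2,\gamma^s}^2$ as in Proposition \ref{p:prop44} — is precisely to keep the final constant free of the local area $\mu_\gamma(f_t)$: only the $\mu_\gamma$-free inequalities Corollary \ref{MS1cor}(ii),(v) are used, and every cutoff-derivative error is balanced against a power of $\vn{A}_{4,[\gamma>0]}$ rather than against a volume factor, so that the constant depends on $\cg$ and $N$ alone.
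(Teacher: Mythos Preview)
Your overall strategy is the right one, but there is a genuine gap in the treatment of the $\vn{\nabla A}_2^2$ evolution, and a second unresolved difficulty in the absorption step.

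\textbf{The $|A|^2$ residual.} You invoke Lemma~\ref{LMenergyhighervanillaconstantricci} with $k=1$, which leaves the cutoff error $c(\cg)^6\int_M|A|^2\gamma^{s-6}\,d\mu$. This term cannot be controlled by $c(\cg)^4\varepsilon_0$ without a factor of $\mu_\gamma(f)$: any pointwise interpolation of $(\cg)^6|A|^2$ against $|A|^4$ or $|A|^8$ necessarily spawns a bare $(\cg)^8$ term, and H\"older gives $\int_{[\gamma>0]}|A|^2\,d\mu\le\mu_\gamma(f)^{1/2}\vn{A}_{4,[\gamma>0]}^2$. Your proposed H\"older step $(\cg)^a\int|A|^b\le(\cg)^a\vn{A}_{4,[\gamma>0]}^{b-4}\int|A|^4$ is only stated for $b\ge4$ (and is in any case not a valid inequality for $b>4$), so it does not touch the $b=2$ case. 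The paper is explicit about this obstruction: ``the typical approach with Lemma~\ref{LMenergyhighervanillaconstantricci} interpolates between $\nabla_{(3)}A$ and $A$ in $L^2$, whereas we wish to go down instead to $A$ in $L^4$.'' Accordingly the paper recomputes the evolution of $\int|\nabla A|^2\gamma^s\,d\mu$ by hand, arranging the integrations by parts so that the cutoff residuals land on $(\cg)^4\int|\nabla A|^2\gamma^{s-4}$ and $(\cg)^4\int|A|^4\gamma^{s-4}$, both of which are bounded directly by $(\cg)^4\varepsilon_0$ via the hypothesis.

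\textbf{The absorption of $\int|\nabla A|^4$.} Your plan routes $c\int|\nabla A|^4\gamma^s$ through Corollary~\ref{MS1cor}(v), returning a \emph{non-small} multiple of $\int|\nabla_{(2)}A|^2|A|^2\gamma^s$ to the right. You note that the net coefficient of this integral on the left must then stay positive, but you do not verify it, and there is no reason the fixed constant $-c_0$ (which is $-4$) dominates the constant coming from Corollary~\ref{MS1cor}(v) multiplied by the large $c$ in front of $\int|\nabla A|^4$. The paper avoids this: it applies the Michael--Simon inequality to $\int|\nabla A|^4\gamma^s$ directly and uses the smallness of $\int_{[\gamma>0]}|\nabla A|^2\,d\mu$ (not just of $\vn{A}_4$) to obtain
\[
\int_M(|\nabla A|^4+|A|^8)\gamma^s\,d\mu\le c\varepsilon_0\int_M|\nabla_{(3)}A|^2\gamma^s\,d\mu+c(\cg)^4\varepsilon_0,
\]
with a \emph{small} coefficient $c\varepsilon_0$, which is then absorbed into the $-\tfrac32\int|\nabla_{(3)}A|^2\gamma^s$ dissipation. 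Exploiting the smallness of $\vn{\nabla A}_{2,[\gamma>0]}$---which your scheme never uses---is what makes this closure work.
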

\begin{proof}
Let us first calculate
\begin{align*}
\frac{d}{dt}\int_M |A|^4\gamma^s\,d\mu
 &= 4\int_M |A|^2\IP{A}{A_t}\gamma^s\,d\mu
+ \int_M |A|^4\IP{\vec{H}}{\BF}\gamma^s\,d\mu
  + s\int_M |A|^4\gamma_t\gamma^{s-1}\,d\mu
\\&=
    4\int_M |A|^2\IP{A}{
  - (\Delta^\perp)^2 A + \big(P_3^2 + P_5^0 \big)(A)
			}\gamma^s\,d\mu
\\&\qquad
+ \int_M |A|^4\IP{\vec{H}}{\big(P_1^2 + P_3^0 \big)(A)}\gamma^s\,d\mu
  + s\int_M |A|^4\gamma_t\gamma^{s-1}\,d\mu
\,.
\end{align*}
Observe that
\begin{align*}
    4&\int_M |A|^2\IP{A}{
  - (\Delta^\perp)^2 A + \big(P_3^2 + P_5^0 \big)(A)
			}\gamma^s\,d\mu
+ \int_M |A|^4\IP{\vec{H}}{\big(P_1^2 + P_3^0 \big)(A)}\gamma^s\,d\mu
\\
&=
  - 4\int_M |A|^2\IP{A}{
    \nabla^p \nabla^q \nabla_q \nabla_p A
			}\gamma^s\,d\mu
+ \int_M P_3^0(A)*\big(P_3^2 + P_5^0 \big)(A)\gamma^s\,d\mu
\\
&=
  - 4\int_M \IP{\nabla_{(2)}(A|A|^2)}{
    \nabla_{(2)} A
			}\gamma^s\,d\mu
  + 4s\int_M |A|^2\IP{A\nabla^p\gamma}{
    \nabla^q \nabla_q \nabla_p A
			}\gamma^{s-1}\,d\mu
\\&\qquad
  - 4s\int_M \IP{\nabla^p(A|A|^2)\nabla^q\gamma}{
    \nabla_q \nabla_p A
			}\gamma^{s-1}\,d\mu
+ \int_M P_3^0(A)*\big(P_3^2 + P_5^0 \big)(A)\gamma^s\,d\mu
\\
&=
  - 4\int_M \IP{\nabla^q(\nabla^p A|A|^2 + 2A\IP{A}{\nabla^p A})}{
    \nabla_{qp} A
			}\gamma^s\,d\mu
\\&\qquad
  + 4s\int_M |A|^2\IP{A\nabla^p\gamma}{
    \nabla^q \nabla_q \nabla_p A
			}\gamma^{s-1}\,d\mu
  - 4s\int_M \IP{\nabla^p(A|A|^2)\nabla^q\gamma}{
    \nabla_q \nabla_p A
			}\gamma^{s-1}\,d\mu
\\&\qquad
+ \int_M P_3^0(A)*\big(P_3^2 + P_5^0 \big)(A)\gamma^s\,d\mu
\\
&=
  - 4\int_M \IP{\nabla^{qp} A|A|^2 + 2\nabla^pA\IP{A}{\nabla^qA} + 2\nabla^qA\IP{A}{\nabla^p A}
                                   + 2A\IP{\nabla^pA}{\nabla^qA} + 2A\IP{A}{\nabla^{qp} A}}{
    \nabla_{qp} A
			}\gamma^s\,d\mu
\\&\qquad
  + 4s\int_M |A|^2\IP{A\nabla^p\gamma}{
    \nabla^q \nabla_q \nabla_p A
			}\gamma^{s-1}\,d\mu
  - 4s\int_M \IP{\nabla^p(A|A|^2)\nabla^q\gamma}{
    \nabla_q \nabla_p A
			}\gamma^{s-1}\,d\mu
\\&\qquad
+ \int_M P_3^0(A)*\big(P_3^2 + P_5^0 \big)(A)\gamma^s\,d\mu
\\
&=
  - 4\int_M |\nabla_{(2)}A|^2|A|^2\,\gamma^s\,d\mu
  - 8\int_M |\IP{A}{\nabla_{(2)}A}|^2\,\gamma^s\,d\mu
\\&\qquad
  - 4\int_M \IP{2\nabla^pA\IP{A}{\nabla^qA} + 2\nabla^qA\IP{A}{\nabla^p A}
                                   + 2A\IP{\nabla^pA}{\nabla^qA} }{
    \nabla_{qp} A
			}\gamma^s\,d\mu
\\&\qquad
  + 4s\int_M |A|^2\IP{A\nabla^p\gamma}{
    \nabla^q \nabla_q \nabla_p A
			}\gamma^{s-1}\,d\mu
  - 4s\int_M \IP{\nabla^p(A|A|^2)\nabla^q\gamma}{
    \nabla_q \nabla_p A
			}\gamma^{s-1}\,d\mu
\\&\qquad
+ \int_M P_3^0(A)*\big(P_3^2 + P_5^0 \big)(A)\gamma^s\,d\mu
\\
&=
  - 4\int_M |\nabla_{(2)}A|^2|A|^2\,\gamma^s\,d\mu
  - 8\int_M |\IP{A}{\nabla_{(2)}A}|^2\,\gamma^s\,d\mu
  +  \int_M (A*\nabla A*\nabla A*\nabla_{(2)}A)\,\gamma^s\,d\mu
\\&\qquad
  + 4s\int_M |A|^2\IP{A\nabla^p\gamma}{
    \nabla^q \nabla_q \nabla_p A
			}\gamma^{s-1}\,d\mu
  - 4s\int_M \IP{\nabla^p(A|A|^2)\nabla^q\gamma}{
    \nabla_q \nabla_p A
			}\gamma^{s-1}\,d\mu
\\&\qquad
+ \int_M P_3^0(A)*\big(P_3^2 + P_5^0 \big)(A)\gamma^s\,d\mu
\end{align*}
Using $\gamma = \tilde\gamma\circ f$, we combine this with the evolution of $\vn{A}_{4,\gamma^{s/4}}^4$ and estimate to find
\begin{align*}
\frac{d}{dt}\int_M |A|^4\gamma^s\,d\mu
&\le
  - 4\int_M |\nabla_{(2)}A|^2|A|^2\,\gamma^s\,d\mu
  - 8\int_M |\IP{A}{\nabla_{(2)}A}|^2\,\gamma^s\,d\mu
\\&\qquad
  + 4s\int_M |A|^2\IP{A\nabla^p\gamma}{
    \nabla^q \nabla_q \nabla_p A
			}\gamma^{s-1}\,d\mu
  - 4s\int_M \IP{\nabla^p(A|A|^2)\nabla^q\gamma}{
    \nabla_q \nabla_p A
			}\gamma^{s-1}\,d\mu
\\&\qquad
+ \int_M (A*\nabla A*\nabla A*\nabla_{(2)}A + P_3^0(A)*P_3^2(A) + P_8^0(A))\gamma^s\,d\mu
\\&\qquad
  + s\int_M |A|^4\gamma_t\gamma^{s-1}\,d\mu
\\
	&\le -4\int_M |\nabla_{(2)}A|^2|A|^2\,\gamma^s\,d\mu
+ \int_M (A*\nabla A*\nabla A*\nabla_{(2)}A + P_3^0(A)*P_3^2(A) + P_8^0(A))\gamma^s\,d\mu
	 \\&\qquad
	 + c(\cg)\int_M (|A|^4|\nabla_{(2)}A| + |A|^3|\nabla_{(3)}A| + |A|^2|\nabla A||\nabla_{(2)}A| + |A|^7)\,\gamma^{s-1}\,d\mu
	 \\
	&\le (-4+\delta_1+\delta_5)\int_M |\nabla_{(2)}A|^2|A|^2\,\gamma^s\,d\mu
	 + \delta_3\int_M |\nabla_{(3)}A|^2\,\gamma^s\,d\mu
\\&\qquad
	 + (\delta_2+\delta_4+\delta_7)\int_M |A|^8\,\gamma^s\,d\mu
	 + \delta_6\int_M |\nabla A|^4\,\gamma^s\,d\mu
	 \\&\qquad
+ \int_M (A*\nabla A*\nabla A*\nabla_{(2)}A + P_3^0(A)*P_3^2(A) + P_8^0(A))\gamma^s\,d\mu
	 \\&\qquad
	 + c(\cg)^4\int_M |A|^4\,\gamma^{s-4}\,d\mu
	 \,.
\end{align*}
In the above, we used the estimates ($c$ varies from line to line, is fixed depending on $\delta_i$, $s$, $\tilde\gamma$ to be chosen)
\begin{align*}
	 c(\cg)\int_M |A|^4|\nabla_{(2)}A|\,\gamma^{s-1}\,d\mu
	&\le \delta_1\int_M |\nabla_{(2)}A|^2|A|^2\,\gamma^s\,d\mu
	+ c(\cg)^2\int_M |A|^6\,\gamma^{s-2}\,d\mu
\\
	&\le \delta_1\int_M |\nabla_{(2)}A|^2|A|^2\,\gamma^s\,d\mu
	   + \delta_2\int_M |A|^8\,\gamma^s\,d\mu
	+ c(\cg)^4\int_M |A|^4\,\gamma^{s-4}\,d\mu\,,
\end{align*}
\begin{align*}
	 c(\cg)\int_M |A|^3|\nabla_{(3)}A|\,\gamma^{s-1}\,d\mu
	&\le \delta_3\int_M |\nabla_{(3)}A|^2\,\gamma^s\,d\mu
	+ c(\cg)^2\int_M |A|^6\,\gamma^{s-2}\,d\mu
\\
	&\le \delta_3\int_M |\nabla_{(2)}A|^2|A|^2\,\gamma^s\,d\mu
	   + \delta_4\int_M |A|^8\,\gamma^s\,d\mu
	+ c(\cg)^4\int_M |A|^4\,\gamma^{s-4}\,d\mu\,,
\end{align*}
\begin{align*}
	 c(\cg)\int_M |A|^2|\nabla A||\nabla_{(2)}A|\,\gamma^{s-1}\,d\mu
	&\le \delta_5\int_M |\nabla_{(2)}A|^2|A|^2\,\gamma^s\,d\mu
	+ c(\cg)^2\int_M |\nabla A|^2|A|^2\,\gamma^{s-2}\,d\mu
\\
	&\le \delta_5\int_M |\nabla_{(2)}A|^2|A|^2\,\gamma^s\,d\mu
	   + \delta_6\int_M |\nabla A|^4\,\gamma^s\,d\mu
	+ c(\cg)^4\int_M |A|^4\,\gamma^{s-4}\,d\mu\,,
\end{align*}
\begin{align*}
	 c(\cg)\int_M |A|^7\,\gamma^{s-1}\,d\mu
	&\le \frac{\delta_7}{2}\int_M |A|^8\,\gamma^s\,d\mu
	+ c(\cg)^2\int_M |A|^6\,\gamma^{s-2}\,d\mu
\\
	&\le \delta_7\int_M |A|^8\,\gamma^s\,d\mu
	+ c(\cg)^4\int_M |A|^4\,\gamma^{s-4}\,d\mu\,.
\end{align*}
Now let us deal with the $P$-style terms by estimating
\begin{align*}
\int_M &(A*\nabla A*\nabla A*\nabla_{(2)}A + P_3^0(A)*P_3^2(A) + P_8^0(A))\gamma^s\,d\mu
\\
 &\le 
        c\int_M (|A||\nabla A|^2|\nabla_{(2)}A| + |A|^3(|A|^2|\nabla_{(2)}A| + |A||\nabla A|^2) + |A|^8)\gamma^s\,d\mu
\\
 &\le \delta_8\int_M |\nabla_{(2)}A|^2|A|^2\,\gamma^s\,d\mu
   + c\int_M (|\nabla A|^4 + |A|^8)\gamma^s\,d\mu\,.
\end{align*}
Combining, we find
\begin{equation}
\label{EQfebfin1}
\begin{aligned}
\frac{d}{dt}\int_M |A|^4\gamma^s\,d\mu
	&\le (-4+\delta_1+\delta_5+\delta_8)\int_M |\nabla_{(2)}A|^2|A|^2\,\gamma^s\,d\mu
	 + \delta_3\int_M |\nabla_{(3)}A|^2\,\gamma^s\,d\mu
\\&\qquad
	 + (c+\delta_2+\delta_4+\delta_7)\int_M |A|^8\,\gamma^s\,d\mu
	 + (c+\delta_6)\int_M |\nabla A|^4\,\gamma^s\,d\mu
	 \\&\qquad
	 + c(\cg)^4\int_M |A|^4\,\gamma^{s-4}\,d\mu
	 \,.
\end{aligned}
\end{equation}

Now we turn to the next term, $\int_M |\nabla A|^2\,\gamma^s\,d\mu$.
Since this is an $L^2$-norm, the evolution is standard.
Unfortunately, the typical approach with Lemma \ref{LMenergyhighervanillaconstantricci} interpolates between $\nabla_{(3)}A$ and $A$ in $L^2$, whereas we wish to go down instead to $A$ in $L^4$.
So we calculate
\begin{align*}
\frac{d}{dt}\int_M |\nabla A|^2\gamma^s\,d\mu
 &= 2\int_M \IP{\nabla A}{
			-\nabla^p\Delta\nabla_p\nabla A + (P_3^3 + P_5^1)(A)
}\gamma^s\,d\mu
\\&\qquad
+ \int_M |\nabla A|^2\IP{\vec{H}}{\BF}\gamma^s\,d\mu
  + s\int_M |\nabla A|^2\gamma_t\gamma^{s-1}\,d\mu
\\
 &= -2\int_M \IP{\nabla A}{
			\nabla^p\Delta\nabla_p\nabla A 
}\gamma^s\,d\mu
+ \int_M (\nabla A*(P_3^3(A) + P_5^1(A)))\gamma^s\,d\mu
\\&\qquad
  + s\int_M |\nabla A|^2\gamma_t\gamma^{s-1}\,d\mu
\,.
\end{align*}
For the first two terms, we find
\begin{align*}
 -2&\int_M \IP{\nabla A}{
			\nabla^p\Delta\nabla_p\nabla A 
}\gamma^s\,d\mu
+ \int_M (\nabla A*(P_3^3(A) + P_5^1(A)))\gamma^s\,d\mu
\\
&=
  - 2\int_M |\nabla_{(3)}A|^2\,\gamma^s\,d\mu
  + 2s\int_M \IP{\nabla^r A\nabla^q\gamma}{
    \Delta \nabla_{qr} A}\gamma^{s-1}\,d\mu
  - 2s\int_M \IP{\nabla^{qr}A\nabla^p\gamma}{
     \nabla_{pqr} A
			}\gamma^{s-1}\,d\mu
\\&\qquad
+ \int_M (\nabla A*(P_3^3(A) + P_5^1(A)))\gamma^s\,d\mu
\\
&=
  - 2\int_M |\nabla_{(3)}A|^2\,\gamma^s\,d\mu
  - 2s\int_M \IP{\nabla^{qr}A\nabla^p\gamma}{
     \nabla_{pqr} A
			}\gamma^{s-1}\,d\mu
\\&\qquad
  - 2s\int_M \IP{\gamma\nabla^{pr} A\nabla^q\gamma
              +  \gamma\nabla^{r} A\nabla^{pq}\gamma
              +  (s-1)\nabla^{r} A\nabla^{p}\gamma\nabla^{q}\gamma
}{
    \nabla_{pqr} A}\gamma^{s-2}\,d\mu
\\&\qquad
+ \int_M (\nabla A*(P_3^3(A) + P_5^1(A)))\gamma^s\,d\mu\,.
\end{align*}
Note that
\begin{align*}
  - 2s(s-1)\int_M \IP{\nabla^{r} A\nabla^{p}\gamma\nabla^{q}\gamma }{
	  \nabla_{pqr} A}\gamma^{s-2}\,d\mu
  \le c(\cg)^2\int_M (\nabla_{(3)}A * \nabla A)\,\gamma^{s-2}\,d\mu
	\,.
\end{align*}
Classifying and estimating terms in this way, using also $\gamma = \tilde\gamma\circ f$, we combine this with the evolution of $\vn{\nabla A}_{2,\gamma^{s/2}}^2$ to find (note that the $c$ here depends on $s$ and $N$)
\begin{equation}
\label{EQfeb2}
\begin{aligned}
	\frac{d}{dt}\int_M &|\nabla A|^2\gamma^s\,d\mu
\le
  - 2\int_M |\nabla_{(3)}A|^2\,\gamma^s\,d\mu
+ \int_M (\nabla A*(P_3^3(A) + P_5^1(A)))\gamma^s\,d\mu
\\&\qquad
  + c(\cg)^2\int_M (\nabla_{(3)}A * \nabla A)
			\gamma^{s-2}\,d\mu
  + c(\cg)\int_M (\nabla_{(3)}A * (\nabla_{(2)}A + A*\nabla A))
			\gamma^{s-1}\,d\mu
\\&\qquad
+ c(\cg)\int_M |\nabla A|^2(|\nabla_{(2)}A| + |A|^3)\gamma^{s-1}\,d\mu
\,.
\end{aligned}
\end{equation}
For the $P$-style terms we estimate (here $c$ depends additionally on $\delta_9, \delta_{10}, \delta_{11}$)
\begin{equation}
\label{EQfeb200}
\begin{aligned}
\int_M&\left([P_3^3(A)+P_5^1(A)]*\nabla A\right)\gamma^{s}d\mu
\notag\\*
&\le c\int_M\Big(\big[|A|^2|\nabla_{(3)}A|
                    + |\nabla_{(2)}A||\nabla A||A|
                    + |\nabla A|^3
                    + |A|^4|\nabla A|
                 \big]\,|\nabla A|\Big)\gamma^{s} d\mu
\notag\\
&\le c\int_M\Big(|A|^2|\nabla A||\nabla_{(3)}A|
                    + |\nabla_{(2)}A||\nabla A|^2|A|
                    + |\nabla A|^4
                    + |A|^4|\nabla A|^2
            \Big)\,\gamma^{s}d\mu
\notag\\&\le
     \delta_9\int_M |\nabla_{(3)}A|^2\gamma^sd\mu
   + \delta_{10}\int_M |\nabla_{(2)}A|^2|A|^2\gamma^sd\mu
   + c\int_M\Big(
                      |\nabla A|^4
                    + |A|^4|\nabla A|^2
            \Big)\,\gamma^{s}d\mu
\notag\\&\le
     \delta_9\int_M |\nabla_{(3)}A|^2\gamma^sd\mu
   + \delta_{10}\int_M |\nabla_{(2)}A|^2|A|^2\gamma^sd\mu
   + \delta_{11}\int_M |A|^8\gamma^sd\mu
   + c\int_M |\nabla A|^4 \,\gamma^{s}d\mu
\,.
\end{aligned}
\end{equation}
We additionally observe the estimate
\begin{equation}
\label{EQfeb20}
\begin{aligned}
c(\cg)^2&\int_M (\nabla_{(3)}A * \nabla A)
			\gamma^{s-2}\,d\mu
+ c(\cg)\int_M (\nabla_{(3)}A * (\nabla_{(2)}A + A*\nabla A))
			\gamma^{s-1}\,d\mu
\\&\qquad\qquad
+ c(\cg)\int_M |\nabla A|^2(|\nabla_{(2)}A| + |A|^3)\gamma^{s-1}\,d\mu
\\&\le
    \delta_{12}\int_M |\nabla_{(3)}A|^2\,\gamma^s\,d\mu
    + \delta_{13}\int_M |A|^8\,\gamma^s\,d\mu
    + c\int_M |\nabla A|^4 \,\gamma^{s}d\mu
    + c(\cg)^2\int_M |\nabla_{(2)}A|^2\,\gamma^{s-2}\,d\mu
\\&\qquad
    + c(\cg)^4\int_M |\nabla A|^2\,\gamma^{s-4}\,d\mu
    + c(\cg)^4\int_M |A|^4\,\gamma^{s-4}\,d\mu
\,.
\end{aligned}
\end{equation}
Since
\[
    c(\cg)^2\int_M |\nabla_{(2)}A|^2\,\gamma^{s-2}\,d\mu
\le
    \delta_{14}\int_M |\nabla_{(3)}A|^2\,\gamma^s\,d\mu
    + c(\cg)^4\int_M |\nabla A|^2\,\gamma^{s-4}\,d\mu
\]
we refine \eqref{EQfeb20} to
\begin{equation}
\label{EQfeb201}
\begin{aligned}
c(\cg)&\int_M (\nabla_{(3)}A * (\nabla_{(2)}A + (\cg)\nabla A + A*\nabla A))
			\gamma^{s-1}\,d\mu
+ c(\cg)\int_M |\nabla A|^2(|\nabla_{(2)}A| + |A|^3)\gamma^{s-1}\,d\mu
\\&\le
    (\delta_{12}+\delta_{14})\int_M |\nabla_{(3)}A|^2\,\gamma^s\,d\mu
    + \delta_{13}\int_M |A|^8\,\gamma^s\,d\mu
    + c\int_M |\nabla A|^4 \,\gamma^{s}d\mu
\\&\qquad
    + c(\cg)^4\int_M |\nabla A|^2\,\gamma^{s-4}\,d\mu
    + c(\cg)^4\int_M |A|^4\,\gamma^{s-4}\,d\mu
\,.
\end{aligned}
\end{equation}
Combining \eqref{EQfeb200} and \eqref{EQfeb201} with \eqref{EQfeb2} we find
\begin{equation}
\label{EQfebfin2}
\begin{aligned}
\frac{d}{dt}\int_M |\nabla A|^2\gamma^s\,d\mu
&\le
  - (2-\delta_9-\delta_{12}-\delta_{14})\int_M |\nabla_{(3)}A|^2\,\gamma^s\,d\mu
   + \delta_{10}\int_M |\nabla_{(2)}A|^2|A|^2\gamma^sd\mu
\\&\qquad
    + (\delta_{11}+\delta_{13})\int_M |A|^8\,\gamma^s\,d\mu
    + c\int_M |\nabla A|^4 \,\gamma^{s}d\mu
\\&\qquad
    + c(\cg)^4\int_M |\nabla A|^2\,\gamma^{s-4}\,d\mu
    + c(\cg)^4\int_M |A|^4\,\gamma^{s-4}\,d\mu
\,.
\end{aligned}
\end{equation}
Taking the final estimates \eqref{EQfebfin1} and \eqref{EQfebfin2} together, we obtain
\begin{equation}
\label{EQfebfin3}
\begin{aligned}
\frac{d}{dt}\int_M (|A|^4+|\nabla A|^2)\gamma^s\,d\mu
	&\le
  - (2-\delta_3-\delta_9-\delta_{12}-\delta_{14})\int_M |\nabla_{(3)}A|^2\,\gamma^s\,d\mu
\\&\qquad
  - (4-\delta_1-\delta_5-\delta_8-\delta_{10})\int_M |\nabla_{(2)}A|^2|A|^2\,\gamma^s\,d\mu
\\&\qquad
	 + (c+\delta_2+\delta_4+\delta_7+\delta_{11}+\delta_{13})\int_M |A|^8\,\gamma^s\,d\mu
	 + (c+\delta_6)\int_M |\nabla A|^4\,\gamma^s\,d\mu
\\&\qquad
    + c(\cg)^4\int_M |\nabla A|^2\,\gamma^{s-4}\,d\mu
    + c(\cg)^4\int_M |A|^4\,\gamma^{s-4}\,d\mu
\,.
\end{aligned}
\end{equation}
With appropriate choices for $\delta_i$ we find
\begin{equation}
\label{EQfebfin4}
\begin{aligned}
\frac{d}{dt}\int_M (|A|^4+|\nabla A|^2)\gamma^s\,d\mu
	&\le
  - \frac32\int_M |\nabla_{(3)}A|^2\,\gamma^s\,d\mu
  - 3\int_M |\nabla_{(2)}A|^2|A|^2\,\gamma^s\,d\mu
\\&\qquad
	 + c\int_M |A|^8\,\gamma^s\,d\mu
	 + c\int_M |\nabla A|^4\,\gamma^s\,d\mu
\\&\qquad
    + c(\cg)^4\int_M |\nabla A|^2\,\gamma^{s-4}\,d\mu
    + c(\cg)^4\int_M |A|^4\,\gamma^{s-4}\,d\mu
\,.
\end{aligned}
\end{equation}
We deal with each of the integrals with a large coefficient in turn.
By the Michael-Simon Sobolev inequality we estimate
\begin{align*}
 \int_M |\nabla A|^4\,\gamma^s\,d\mu
&\le
	c\bigg(
		\int_M |\nabla A|^2|\nabla_{(2)}A|\,\gamma^{\frac{3s}{4}}\,d\mu
	 \bigg)^\frac43
      + c\bigg(
		\int_M |A||\nabla A|^3\,\gamma^{\frac{3s}{4}}\,d\mu
	 \bigg)^\frac43
\\&\qquad
      + c\bigg(
		(\cg)\int_M |\nabla A|^3\,\gamma^{\frac{3s-4}{4}}\,d\mu
	 \bigg)^\frac43
\\
&\le
	c\bigg(
		\int_M |\nabla A|^4\,\gamma^s\,d\mu
	 \bigg)^\frac23
	 \bigg(
		\int_M |\nabla_{(2)}A|^2\,\gamma^{\frac{s}{2}}\,d\mu
	 \bigg)^\frac23
      + c\bigg(
		\int_{[\gamma>0]} |A|^4d\mu
	 \bigg)^\frac13
		\int_M |\nabla A|^4\,\gamma^s\,d\mu
\\&\qquad
      + c(\cg)^\frac43\bigg(
		\int_{[\gamma>0]} |\nabla A|^2\,d\mu
	 \bigg)^\frac23
         \bigg(
		\int_M |\nabla A|^4\,\gamma^s\,d\mu
	 \bigg)^\frac23
\\
&\le
	c\bigg(
		\int_M |\nabla_{(2)}A|^2\,\gamma^{\frac{s}{2}}\,d\mu
	 \bigg)^2
      +  \Big(\frac12 + c\varepsilon_0^\frac13
	 \Big)
		\int_M |\nabla A|^4\,\gamma^s\,d\mu
      + c(\cg)^4\bigg(
		\int_{[\gamma>0]} |\nabla A|^2\,d\mu
	 \bigg)^2
\end{align*}
Now observe the intermediate estimate
\begin{align*}
	\bigg(
		\int_M |\nabla_{(2)}A|^2\,\gamma^{\frac{s}{2}}\,d\mu
	 \bigg)^2
&\le
	c\bigg(
		\int_M |\nabla_{(3)}A||\nabla A|\,\gamma^{\frac{s}{2}}\,d\mu
	 \bigg)^2
      + c\bigg(
		(\cg)\int_M |\nabla_{(2)}A||\nabla A|\,\gamma^{\frac{s-2}{2}}\,d\mu
	 \bigg)^2
\\
&\le \frac12
	\bigg(
		\int_M |\nabla_{(2)}A|^2\,\gamma^{\frac{s}{2}}\,d\mu
	 \bigg)^2
 + 
	c\varepsilon_0
		\int_M |\nabla_{(3)}A|^2\,\gamma^s\,d\mu
      + c\bigg(
		(\cg)^2\int_M |\nabla A|^2\,\gamma^{\frac{s-4}{2}}\,d\mu
	 \bigg)^2\,,
\end{align*}
that is,
\[
	\bigg(
		\int_M |\nabla_{(2)}A|^2\,\gamma^{\frac{s}{2}}\,d\mu
	 \bigg)^2
	\le c\varepsilon_0
		\int_M |\nabla_{(3)}A|^2\,\gamma^s\,d\mu
      + c(\cg)^4\bigg(
		\int_M |\nabla A|^2\,\gamma^{\frac{s-4}{2}}\,d\mu
	 \bigg)^2\,.
\]
Combining this with the above yields
\begin{equation}
\label{EQfebfin5}
\begin{aligned}
 \int_M |\nabla A|^4\,\gamma^s\,d\mu
&\le
	 c\varepsilon_0
		\int_M |\nabla_{(3)}A|^2\,\gamma^s\,d\mu
      +  \Big(\frac12 + c\varepsilon_0^\frac13
	 \Big)
		\int_M |\nabla A|^4\,\gamma^s\,d\mu
\\&\qquad
      + c(\cg)^4\bigg(
		\int_{[\gamma>0]} |\nabla A|^2\,d\mu
	 \bigg)^2\,.
\end{aligned}
\end{equation}
A similar argument applies to the integral $\int_M|A|^8\,\gamma^s\,d\mu$ (see the derivation of \eqref{EQinter} for details), yielding:
\begin{equation}
\label{EQfebfin6}
\begin{aligned}
\int_M |A|^8\,\gamma^s\,d\mu
&\le
     c\vn{A}_{4,[\gamma>0]}^\frac43
      \int_M \big(|\nabla A|^4 + |A|^8\big)\,\gamma^s\,d\mu
   + c(c_\gamma)^4
      \vn{A}_{4,[\gamma>0]}^\frac{16}3
\,.
\end{aligned}
\end{equation}
Combining \eqref{EQfebfin5}, \eqref{EQfebfin6} and taking a sufficiently small $\varepsilon_0$ such that the left hand side absorbs, we have
\begin{equation}
\label{EQfebfin7}
\begin{aligned}
 \int_M (|A|^8+|\nabla A|^4)\,\gamma^s\,d\mu
&\le
	 c\varepsilon_0
		\int_M |\nabla_{(3)}A|^2\,\gamma^s\,d\mu
      + c(\cg)^4\varepsilon_0
      \,.
\end{aligned}
\end{equation}
Combining \eqref{EQfebfin7} with \eqref{EQfebfin4} above and choosing $\varepsilon_0$ again if necessary, we finally arrive at the estimate
\begin{align*}
&\rD{}{t}\int_M (|A|^4+|\nabla A|^2)\gamma^sd\mu
   +   \int_M \Big(|\nabla_{(3)}A|^2
                    + |\nabla_{(2)}A|^2|A|^2
                    + |\nabla A|^4
                    + |A|^4|\nabla A|^2
                    + |A|^8
            \Big)\,\gamma^{s}d\mu
\\
&\qquad \le c(\cg)^4\varepsilon_0
\,.
\end{align*}
Integrating the above finishes the proof.
\end{proof}

\begin{prop}
\label{p:prop44n42}
Suppose $f:M^4\times[0,T^*]\rightarrow\R^{N}$ evolves by
\eqref{EQwf} and $\gamma$ is a cutoff function as in \eqref{EQgamma}.
Then there is an $\varepsilon_0 = \varepsilon_0(N)$ such that if
\begin{equation}
\label{eq9n42}
\varepsilon = \sup_{[0,T^*]}\int_{[\gamma>0]}|A|^4d\mu\le\varepsilon_0
\end{equation}
then for any $t\in[0,T^*]$ we have
\begin{equation}
\label{eq10n42}
  \begin{split}
&\int_{[\gamma=1]} |\nabla_{(2)} A|^2 d\mu
 + \int_0^t\int_{[\gamma=1]} 
               |\nabla_{(4)}A|^2
                    + |\nabla_{(3)}A|^2|A|^2
                    + |\nabla_{(2)}A|^2|A|^4
\\
&\hskip+5cm
                    + |\nabla_{(2)}A|^2|\nabla A|^2
                    + |\nabla A|^4|A|^2
                    + |\nabla A|^2|A|^6
                    + |A|^{10}
                 \,          d\mu\,d\tau
\\
&\qquad\qquad\qquad 
  \le \int_{[\gamma > 0]} |\nabla_{(2)} A|^2 d\mu\Big|_{t=0}
   + ct(\cg)^6\varepsilon^\frac12
            \big(1+[(\cg)^4\mu(f_0)]^\frac12
            \big)
            \big(1 + \varepsilon^\frac12
	    \big)
\,,
  \end{split}
\end{equation}
where $c=c(N)$.
\end{prop}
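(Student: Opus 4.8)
The plan is to take $E(t)=\int_M|\nabla_{(2)}A|^2\gamma^sd\mu$ (with $s$ fixed and large, say $s\ge16$, so that the Sobolev inequalities of Corollary \ref{MS1cor} are available) as the controlled energy and to mirror the proof of Proposition \ref{newl4} one order of differentiation higher. Because $\int_M|\nabla_{(2)}A|^2d\mu$ is scale-subcritical in dimension four, the delicate scale-invariant bookkeeping of Proposition \ref{newl4} is not needed; I would start directly from Lemma \ref{LMenergyhighervanillaconstantricci} with $k=2$, which gives
\[
\rD{}{t}E + (2-\delta)\int_M|\nabla_{(4)}A|^2\gamma^sd\mu
 \le c(\cg)^8\int_M|A|^2\gamma^{s-8}d\mu
   + c\int_M\nabla_{(2)}A*\big(P_3^4(A)+P_5^2(A)\big)\gamma^sd\mu\,.
\]
The first term on the right I dispatch by H\"older's inequality and $\mu_\gamma(f_t)\le\mu(f_0)$: $(\cg)^8\int_M|A|^2\gamma^{s-8}d\mu\le(\cg)^8\vn{A}_{4,[\gamma>0]}^2\,\mu(f_0)^\frac12\le(\cg)^6\big[(\cg)^4\mu(f_0)\big]^\frac12\varepsilon^\frac12$, which is already of the shape appearing on the right of \eqref{eq10n42}. (This is where the $\mu(f_0)$-dependence originates, and it is what makes Lemma \ref{LMenergyhighervanillaconstantricci} usable here, in contrast to Proposition \ref{newl4}.)

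The heart of the matter is the $P$-style term. Distributing derivatives, $\nabla_{(2)}A*P_3^4(A)$ contributes the summands $|A|^2|\nabla_{(4)}A||\nabla_{(2)}A|$, $|A||\nabla A||\nabla_{(3)}A||\nabla_{(2)}A|$, $|A||\nabla_{(2)}A|^3$ and $|\nabla A|^2|\nabla_{(2)}A|^2$, while $\nabla_{(2)}A*P_5^2(A)$ contributes $|A|^4|\nabla_{(2)}A|^2$ and $|A|^3|\nabla A|^2|\nabla_{(2)}A|$. The first of these is handled by Young's inequality, absorbing $\delta\int_M|\nabla_{(4)}A|^2\gamma^sd\mu$ and leaving $c\int_M|A|^4|\nabla_{(2)}A|^2\gamma^sd\mu$. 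The delicate summand is the cubic $\int_M|A||\nabla_{(2)}A|^3\gamma^sd\mu$: a direct Young estimate produces the uncontrolled quantity $\int_M|\nabla_{(2)}A|^4\gamma^sd\mu$, so instead I integrate by parts once, moving a derivative off one $\nabla_{(2)}A$ factor onto $A$ and $\gamma$; this rewrites the term as a combination of $|A||\nabla A||\nabla_{(2)}A||\nabla_{(3)}A|\gamma^s$, $|\nabla A|^2|\nabla_{(2)}A|^2\gamma^s$ and $(\cg)|A||\nabla A||\nabla_{(2)}A|^2\gamma^{s-1}$, each piece of which lies in the scale-critical class. After a further round of Young's inequalities and the routine interpolations that trade powers of $\cg$ for derivatives --- reducing $(\cg)^j\int_M|\nabla_{(\ell)}A|^2\gamma^{s-j}d\mu$, $\ell\le3$, to $\delta\int_M|\nabla_{(4)}A|^2\gamma^sd\mu$ plus $(\cg)$-powers of $\int_{[\gamma>0]}|A|^2d\mu\le\varepsilon^\frac12\mu(f_0)^\frac12$ --- the whole $P$-term reduces to: a small multiple of $\int_M|\nabla_{(4)}A|^2\gamma^sd\mu$; the scale-critical family $\int_M\big(|\nabla_{(3)}A|^2|A|^2+|\nabla_{(2)}A|^2|A|^4+|\nabla_{(2)}A|^2|\nabla A|^2+|\nabla A|^4|A|^2\big)\gamma^sd\mu$; a subcritical leftover $\int_M|\nabla_{(2)}A|^2|A|^2\gamma^sd\mu$; and terms $c(\cg)^j\int_{[\gamma>0]}|A|^2d\mu$.

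To close the estimate I add to both sides the remaining good integrands of \eqref{eq10n42}, namely $\big(|\nabla A|^2|A|^6+|A|^{10}\big)\gamma^s$, and treat the right-hand copies of the scale-critical family with Corollary \ref{MS1cor}(iv): under the hypothesis $\vn{A}_{4,[\gamma>0]}^4\le\varepsilon\le\varepsilon_0$ this absorbs them into $(\theta+c\vn{A}_{4,[\gamma>0]}^4)\int_M|\nabla_{(4)}A|^2\gamma^sd\mu$ plus precisely $c(\cg)^6\vn{A}_{4,[\gamma>0]}^2\big(1+[(\cg)^4\mu_\gamma(f)]^\frac12\big)\big(1+\vn{A}_{4,[\gamma>0]}^2\big)$, which by $\mu_\gamma(f)\le\mu(f_0)$ is bounded by the right side of \eqref{eq10n42}. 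The few right-hand integrands carrying an $O(1)$ coefficient that are not directly covered by Corollary \ref{MS1cor}(iv) --- notably $\int_M|\nabla A|^4|A|^2\gamma^sd\mu$ and the subcritical $\int_M|\nabla_{(2)}A|^2|A|^2\gamma^sd\mu$ --- I handle by their own Michael--Simon--Sobolev self-improving estimates (of the kind used for $\int_M|\nabla A|^4\gamma^sd\mu$ in Proposition \ref{newl4}, cf. \eqref{EQfebfin5}), together with Young's inequality and Corollary \ref{MS1cor}(v), the smallness of $\varepsilon$ again supplying the factors needed to reabsorb, and the subcritical pieces interpolating down to $\int_{[\gamma>0]}|A|^2d\mu$. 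Choosing first $\theta$, then the $\delta$'s, and finally $\varepsilon_0=\varepsilon_0(N)$ small enough that the total coefficient of $\int_M|\nabla_{(4)}A|^2\gamma^sd\mu$ (and of each good term) on the right is at most $\frac12$, I obtain
\[
\rD{}{t}E + \frac12\int_M G\,\gamma^sd\mu \le c(\cg)^6\varepsilon^\frac12\big(1+[(\cg)^4\mu(f_0)]^\frac12\big)\big(1+\varepsilon^\frac12\big)\,,
\]
where $G$ denotes the integrand appearing under the time integral in \eqref{eq10n42}; integrating over $[0,t]$ and using $0\le\gamma\le1$ with $[\gamma=1]\subset[\gamma>0]$ yields \eqref{eq10n42}.

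The main obstacle is, as in Proposition \ref{newl4}, purely organisational: one must keep every term descended from the $P_3^4$ and $P_5^2$ contractions inside the scale-critical class --- which is exactly why the cubic term forces the integration-by-parts step rather than a naive splitting --- and one must track the powers of $\cg$ and of $\mu(f_0)$ carefully enough that the accumulated remainder lands precisely on the stated constant. In particular each absorption of a scale-critical term should be routed through Corollary \ref{MS1cor}(iv) (and (v)), never through Corollary \ref{MS1cor}(iii), whose remainder would contribute the wasteful factor $[(\cg)^4\mu_\gamma(f)]^6$.
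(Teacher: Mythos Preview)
Your approach is essentially the same as the paper's: start from Lemma~\ref{LMenergyhighervanillaconstantricci} at $k=2$, $s=16$, reduce the $P$-style terms to the scale-critical family, and close with Corollary~\ref{MS1cor}(iv). The differences are in which intermediate terms are singled out as delicate and how they are handled.

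You correctly flag the cubic contribution $\int_M|A|\,|\nabla_{(2)}A|^3\gamma^s\,d\mu$ arising from the $(2,2,0)$ piece of $P_3^4$, and your integration-by-parts reduction of it to $|A||\nabla A||\nabla_{(2)}A||\nabla_{(3)}A|$, $|\nabla A|^2|\nabla_{(2)}A|^2$ and lower-order $\gamma$-terms is sound; in fact the paper's displayed $P$-term estimate appears to elide this summand, so your treatment actually fills a small gap there. Conversely, the paper does not invoke Michael--Simon--Sobolev to dispatch $\int_M|\nabla A|^4|A|^2\gamma^s\,d\mu$ as you propose; it arranges the Young splittings so that this term initially carries only a $\theta$-coefficient, and then, in order to promote it to the left-hand side with an $O(1)$ coefficient, bounds it directly via the divergence identity
\[
\int_M|\nabla A|^4|A|^2\gamma^s\,d\mu
= -\int_M\IP{\Delta A}{A}|\nabla A|^2|A|^2\gamma^s\,d\mu
  -\tfrac12\int_M\big|\nabla|A|^2\big|^2|\nabla A|^2\gamma^s\,d\mu
  + (\text{$\nabla\gamma$ terms})\,,
\]
exploiting the sign of the middle term. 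This yields $\int_M|\nabla A|^4|A|^2\gamma^s\,d\mu\le c\int_M(|\nabla_{(2)}A|^2|A|^4+|A|^{10})\gamma^s\,d\mu+c(c_\gamma)^6\vn{A}_{4,[\gamma>0]}^4$, whose right side feeds straight into Corollary~\ref{MS1cor}(iv). Your self-improving Sobolev route would also close, but the paper's integration-by-parts shortcut is cleaner and avoids any extraneous $[(\cg)^4\mu_\gamma]$-powers that a direct MSS bound tends to generate.
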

\begin{proof}
Lemma \ref{LMenergyhighervanillaconstantricci} with $k=2$, $s=16$ gives
\begin{align*}
&\rD{}{t}\int_M |\nabla_{(2)} A|^2\gamma^{16}d\mu
 + (2-\theta)\int_M |\nabla_{(4)}A|^2\gamma^{16}d\mu
\\
&\qquad
\le c(\cg)^8\int_{[\gamma>0]}|A|^2\,d\mu
            + c\int_M \left([P_3^4(A)+P_5^2(A)]*\nabla_{(2)} A\right)\gamma^{16} d\mu
\,.
\end{align*}
We estimate the $P$-style terms as follows:
\begin{align}
c\int_M &\left([P_3^4(A)+P_5^2(A)]*\nabla_{(2)} A\right)\gamma^{16} d\mu
\notag\\
&\le \theta\int_M |\nabla_{(4)}A|^2\gamma^{16}d\mu
 + c\int_M |\nabla_{(2)}A|^2|A|^4\,\gamma^{16}d\mu
\notag\\&\qquad
 + c\int_M \nabla_{(2)}A*(\nabla_{(3)}A*\nabla A*A + 
                          \nabla_{(2)}A*\nabla A*\nabla A
                         )\,\gamma^{16}d\mu
\notag\\&\qquad
 + c\int_M \nabla_{(2)}A*(\nabla_{(2)}A*A*A*A*A 
                        + \nabla A*\nabla A*A*A*A
                         )\,\gamma^{16}d\mu
\notag\\
&\le \theta\int_M \big(|\nabla_{(4)}A|^2
 +  |\nabla_{(3)}A|^2|A|^2
 + |\nabla A|^4|A|^2\big)\,\gamma^{16}d\mu
\notag\\&\qquad
 + c\int_M \big(|\nabla_{(2)}A|^2|A|^4
 +  |\nabla_{(2)}A|^2|\nabla A|^2\big)\,\gamma^{16}d\mu
\,.
\label{EQnewer1}
\end{align}
The equality
\begin{align*}
\int_M |\nabla A|^4|A|^2\,\gamma^{16}d\mu
	&= -\int_M \IP{\Delta A}{A}|\nabla A|^2|A|^2\,\gamma^{16}d\mu
\\&\qquad
	  - 2\int_M \IP{\nabla_pA}{A}\IP{\nabla^p\nabla A}{\nabla A}|A|^2\,\gamma^{16}d\mu
\\&\qquad
	  - 2\int_M \IP{\nabla_pA}{A}|\nabla A|^2\IP{\nabla^pA}{A}\,\gamma^{16}d\mu
\\&\qquad
	  - 16\int_M \IP{\nabla_pA}{A\nabla^p\gamma}|\nabla A|^2|A|^2\,\gamma^{15}d\mu
\\
	&= -\int_M (\nabla_{(2)}A*A)*(\nabla A*\nabla A*A*A)\,\gamma^{16}d\mu
\\&\qquad
	  - \frac12\int_M \big|\nabla|A|^2\big|^2|\nabla A|^2\,\gamma^{16}d\mu
\\&\qquad
	  - 16\int_M \IP{\nabla_pA}{A\nabla^p\gamma}|\nabla A|^2|A|^2\,\gamma^{15}d\mu
\end{align*}
implies the estimate
\begin{align}
\int_M &|\nabla A|^4|A|^2\,\gamma^{16}d\mu
	  + \frac12\int_M \big|\nabla|A|^2\big|^2|\nabla A|^2\,\gamma^{16}d\mu
\notag\\
	&= -\int_M (\nabla_{(2)}A*A)*(\nabla A*\nabla A*A*A)\,\gamma^{16}d\mu
\notag\\&\qquad
	  - 16\int_M \IP{\nabla_pA}{A\nabla^p\gamma}|\nabla A|^2|A|^2\,\gamma^{15}d\mu
\notag\\ &\le
	\frac14\int_M |\nabla A|^4|A|^2\,\gamma^{16}d\mu
	+ c\int_M |\nabla_{(2)} A|^2|A|^4\,\gamma^{16}d\mu
	  - 16\int_M \IP{\nabla_pA}{A\nabla^p\gamma}|\nabla A|^2|A|^2\,\gamma^{15}d\mu
\,.
\label{EQnewer2}
\end{align}
To deal with the last term we use Young's inequality twice (on the first line with exponents 4 and $\frac43$, for the second with exponents $3$ and $\frac32$) to estimate
\begin{align}
	  - 16\int_M \IP{\nabla_pA}{A\nabla^p\gamma}|\nabla A|^2&|A|^2\,\gamma^{15}d\mu
\notag\\ &\le \frac14\int_M |\nabla A|^4|A|^2\,\gamma^{16}d\mu
	+ c(c_\gamma)^4\int_M |A|^6 \gamma^{12}d\mu
\notag\\
 &\le \frac14\int_M |\nabla A|^4|A|^2\,\gamma^{16}d\mu
	+ c\int_M |A|^{10}\gamma^{16}d\mu
	+ c(c_\gamma)^6\int_M |A|^4 \gamma^{10}d\mu
\,.
\label{EQnewer3}
\end{align}
Combining \eqref{EQnewer2} and \eqref{EQnewer3} we find
\begin{align*}
\int_M &|\nabla A|^4|A|^2\,\gamma^{16}d\mu
	  + \frac12\int_M \big|\nabla|A|^2\big|^2|\nabla A|^2\,\gamma^{16}d\mu
\\ &\le
	\frac12\int_M |\nabla A|^4|A|^2\,\gamma^{16}d\mu
	+ c\int_M (|\nabla_{(2)} A|^2|A|^4+|A|^{10})\,\gamma^{16}d\mu
	+ c(c_\gamma)^6\vn{A}^4_{4,[\gamma>0]}  
\,,
\end{align*}
which after absorbing yields
\begin{align}
\int_M |\nabla A|^4|A|^2\,\gamma^{16}d\mu
	  &+ \int_M \big|\nabla|A|^2\big|^2|\nabla A|^2\,\gamma^{16}d\mu
\notag\\ &\le
	  c\int_M (|\nabla_{(2)} A|^2|A|^4+|A|^{10})\,\gamma^{16}d\mu
	+ c(c_\gamma)^6\vn{A}^4_{4,[\gamma>0]}  
\,.
\label{EQnewer4}
\end{align}
Combining \eqref{EQnewer4} with \eqref{EQnewer1}, we find
\begin{align}
c\int_M &\left([P_3^4(A)+P_5^2(A)]*\nabla_{(2)} A\right)\gamma^{16} d\mu
+ c\int_M |\nabla A|^4|A|^2\,\gamma^{16}d\mu
\notag\\
&\le \theta\int_M \big(|\nabla_{(4)}A|^2
 +  |\nabla_{(3)}A|^2|A|^2\big)\,\gamma^{16}d\mu
\notag\\&\qquad
 + c\int_M \big(|\nabla_{(2)}A|^2|A|^4 + |\nabla_{(2)}A|^2|\nabla A|^2 + |A|^{10}\big)\,\gamma^{16}d\mu
	+ c(c_\gamma)^6\vn{A}^4_{4,[\gamma>0]}  
\,.
\label{EQnewer5}
\end{align}
Now we require the multiplicative Sobolev inequality in (iv) of Corollary \ref{MS1cor}.
This is particularly useful for estimating the right hand side of \eqref{EQnewer5}:
\begin{align*}
\int_M &\big(
          |\nabla_{(3)}A|^2|A|^2 + |\nabla_{(2)}A|^2|A|^4 + |\nabla_{(2)}A|^2|\nabla A|^2
        + |\nabla A|^2|A|^6 + |A|^{10}\big)\gamma^{16}\,d\mu
\\&\le
           (\theta + c\vn{A}_{4,[\gamma>0]}^4)\int_M
             |\nabla_{(4)}A|^2
                 \gamma^{16}\,d\mu
  + c(\cg)^6\vn{A}_{4,[\gamma>0]}^2
            \big(1+[(\cg)^4\mu_\gamma(f_t)]^\frac12
            \big)
            \big(1 + \vn{A}_{4,[\gamma>0]}^2
	    \big)
\,.
\end{align*}
where $\theta\in(0,1)$ and $c = c(s,\theta,N)$ is an absolute constant.
Applying this and absorbing, we find
\begin{align*}
&\rD{}{t}\int_M |\nabla_{(2)} A|^2\,\gamma^{16}d\mu
 + \int_M \big(
               |\nabla_{(4)}A|^2
                    + |\nabla_{(3)}A|^2|A|^2
                    + |\nabla_{(2)}A|^2|A|^4
\\
&\hskip+5cm
                    + |\nabla_{(2)}A|^2|\nabla A|^2
                    + |\nabla A|^4|A|^2
                    + |\nabla A|^2|A|^6
                    + |A|^{10}\big)
                 \,\gamma^{16}d\mu\,d\tau
\\
&\quad
\le c(\cg)^6[(\cg)^4\mu_\gamma(f_t)]^\frac12\varepsilon^\frac12
  + c(\cg)^6\varepsilon^\frac12
            \big(1+[(\cg)^4\mu_\gamma(f_t)]^\frac12
            \big)
            \big(1 + \varepsilon^\frac12
	    \big)
\\
&\quad
\le 
    c(\cg)^6\varepsilon^\frac12
            \big(1+[(\cg)^4\mu_\gamma(f_t)]^\frac12
            \big)
            \big(1 + \varepsilon^\frac12
	    \big)
\,.
\end{align*}
Integrating finishes the proof.
\end{proof}

For $L^\infty$ control we use the following estimated form of Lemma
\ref{LMenergyhighervanillaconstantricci}.
The proof of Proposition \ref{p:prop45} carries over essentially unchanged from
\cite{kuwert2002gfw}.
The $n=2$ case of Proposition \ref{p:prop46} is very similar to
\cite{kuwert2002gfw} for $n=2$.
Therefore we focus only on the case $n=4$ in the proof below.

\begin{prop}
\label{p:prop45}
Suppose $f:M^n\times[0,T^*]\rightarrow\R^{N}$ evolves by
\eqref{EQwf} and $\gamma$ is a cutoff function as in \eqref{EQgamma}.
Then, for $s \ge 2k+4$ the following estimate holds:
\begin{equation}
\label{e:prop45}
  \begin{split}
&\rD{}{t}\int_{M} |\nabla_{(k)}A|^2\gamma^s d\mu
 + \int_{M} |\nabla_{(k+2)}A|^2 \gamma^s d\mu \\
&\qquad\qquad\qquad
  \le c\vn{A}^4_{\infty,[\gamma>0]}\int_M|\nabla_{(k)}A|^2\gamma^sd\mu
       + c(\cg)^{2k}\vn{A}^2_{2,[\gamma>0]}(1+\vn{A}^4_{\infty,[\gamma>0]}) \\
  \end{split}
\end{equation}
where $c=c(N)$.
\end{prop}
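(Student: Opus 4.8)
The plan is to follow the scheme of \cite{kuwert2002gfw}: differentiate the localised energy $\int_M|\nabla_{(k)}A|^2\gamma^sd\mu$, integrate by parts to extract the good fourth-order term $-2\int_M|\nabla_{(k+2)}A|^2\gamma^sd\mu$, and then dispose of every remaining curvature term by interpolation, feeding the excess into the pointwise quantity $\vn{A}_{\infty,[\gamma>0]}$ that is now allowed on the right. Differentiating and using Lemmas \ref{LMevolutionequations} and \ref{LMevolutionequationhigher} (so $\partial_t\nabla_{(k)}A=-\Delta^2\nabla_{(k)}A+(P_3^{k+2}+P_5^k)(A)$, while the variations of $g$ and $d\mu$ contribute only terms of the schematic form $|\nabla_{(k)}A|^2*(P_2^2+P_4^0)(A)$ because $\partial_td\mu$ and $\partial_tg^{ij}$ are of type $(P_2^2+P_4^0)(A)$), then integrating by parts twice and commuting covariant derivatives, one arrives at
\begin{equation*}
\begin{aligned}
\rD{}{t}\int_M|\nabla_{(k)}A|^2\gamma^sd\mu
 + 2\int_M|\nabla_{(k+2)}A|^2\gamma^sd\mu
 = &\int_M\nabla_{(k)}A*\big(P_3^{k+2}+P_5^k\big)(A)\,\gamma^sd\mu\\
 &+ \int_M|\nabla_{(k)}A|^2*\big(P_2^2+P_4^0\big)(A)\,\gamma^sd\mu
   + \mathcal{E}_\gamma,
\end{aligned}
\end{equation*}
where $\mathcal{E}_\gamma$ collects the terms in which at least one derivative has fallen on $\gamma^s$, and where the curvature terms generated by the commutations are of strictly lower derivative order and will be dominated by the estimates below.

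By \eqref{EQgamma} every term of $\mathcal{E}_\gamma$ has the shape $(\cg)^a\int_M|\nabla_{(i)}A|\,|\nabla_{(j)}A|\,\gamma^{s-a}d\mu$ with $0\le a\le 4$, $i,j\le k+2$ and $i+j+a\le 2k+4$, possibly times extra factors of $|A|$ or $|\nabla A|$ coming from $\nabla_{(2)}\gamma$, $\nabla_{(3)}\gamma$; the hypothesis $s\ge 2k+4$ is exactly what guarantees a nonnegative power of $\gamma$ is retained throughout these integrations by parts.

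The heart of the argument is then to estimate: (a) the four-fold products of covariant derivatives of $A$, of total derivative order $2k+2$, coming from $\nabla_{(k)}A*P_3^{k+2}(A)$ and $|\nabla_{(k)}A|^2*P_2^2(A)$; (b) the six-fold products, of total order $2k$, from $\nabla_{(k)}A*P_5^k(A)$ and $|\nabla_{(k)}A|^2*P_4^0(A)$; (c) the terms of $\mathcal{E}_\gamma$; and (d) the lower-order commutator terms. To each I would apply the multiplicative (Gagliardo--Nirenberg/Hamilton-type) interpolation inequalities for covariant derivatives of $A$ against the weight $\gamma^s$, exactly as in \cite{kuwert2002gfw}: in each product at most two factors carry high derivative weight, and one interpolates that pair between $\int_M|\nabla_{(k+2)}A|^2\gamma^sd\mu$ and a $(\cg)$-weighted $L^2$ remainder, bounding the remaining factors by powers of $\vn{A}_{\infty,[\gamma>0]}$. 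After Young's inequality each term is thereby dominated by
\begin{equation*}
\delta\int_M|\nabla_{(k+2)}A|^2\gamma^sd\mu
 + c\,\vn{A}^4_{\infty,[\gamma>0]}\int_M|\nabla_{(k)}A|^2\gamma^sd\mu
 + c(\cg)^{2k}\vn{A}^2_{2,[\gamma>0]}\big(1+\vn{A}^4_{\infty,[\gamma>0]}\big),
\end{equation*}
with $\delta$ at our disposal. Summing over the finitely many summands and choosing $\delta$ small enough to absorb the accumulated multiple of $\int_M|\nabla_{(k+2)}A|^2\gamma^sd\mu$ into the $2$ on the left yields \eqref{e:prop45}.

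The main obstacle is entirely the bookkeeping in that last step: one must check, term by term, that the interpolations never cost more than $(\cg)^{2k}$ on the $L^2$ remainder and never more than the fourth power of $\vn{A}_{\infty,[\gamma>0]}$ in front of $\int_M|\nabla_{(k)}A|^2\gamma^sd\mu$; the scaling of \eqref{e:prop45} shows these are precisely the critical exponents, so there is no slack. In particular it is essential \emph{not} to interpolate the pointwise norm $\vn{A}_{\infty,[\gamma>0]}$ further down into $L^2$ at this stage: doing so would only reproduce the weaker, $(\cg)$-wasteful constant term $(\cg)^{2k+4}\vn{A}^2_{2,[\gamma>0]}$ already present in Lemma \ref{LMenergyhighervanillaconstantricci}. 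Apart from this combinatorial care, the computation is routine and, as the authors remark, transfers essentially unchanged from \cite{kuwert2002gfw}.
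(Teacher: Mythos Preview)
Your approach is correct and is precisely the one the paper has in mind: the paper gives no proof at all, simply remarking that ``the proof of Proposition~\ref{p:prop45} carries over essentially unchanged from \cite{kuwert2002gfw},'' and the scheme you outline (differentiate, integrate by parts twice, interpolate the $P$-style and cutoff terms against $\vn{A}_{\infty,[\gamma>0]}$, absorb) is exactly the Kuwert--Sch\"atzle argument.

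One caution on your final paragraph: the exponent $(\cg)^{2k}$ in the stated inequality is almost certainly a typo for $(\cg)^{2k+4}$ (equivalently, the ``$1$'' inside the bracket should read $(\cg)^4$). Observe that the term $(\cg)^{2k}\vn{A}_{2,[\gamma>0]}^2$ by itself does not scale like the left-hand side under parabolic rescaling, whereas $(\cg)^{2k+4}\vn{A}_{2,[\gamma>0]}^2$ and $(\cg)^{2k}\vn{A}_{\infty,[\gamma>0]}^4\vn{A}_{2,[\gamma>0]}^2$ both do; moreover Lemma~\ref{LMenergyhighervanillaconstantricci} already produces $(\cg)^{2k+4}$, and the sole application (Proposition~\ref{p:prop46}) is completely insensitive to the difference. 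So you should not regard ``saving four powers of $\cg$'' as the main obstacle --- the standard bookkeeping already suffices, and there is nothing to be gained by avoiding the interpolation down to $\vn{A}_{2,[\gamma>0]}$ in the cutoff terms.
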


\begin{prop}
\label{p:prop46}
Let $n\in\{2,4\}$.
Suppose $f:M^n\times[0,T^*]\rightarrow\R^{N}$ evolves by
\eqref{EQwf} and $\gamma$ is a cutoff function as in \eqref{EQgamma}.
Then there is an $\varepsilon_0 = \varepsilon_0(n,N)$ such that if
\begin{equation}
\label{eq13}
\sup_{[0,T^*]}\int_{[\gamma>0]}|A|^nd\mu\le\varepsilon_0\,,
\end{equation}
we can conclude
\begin{equation}
\label{eq14}
\vn{\nabla_{(k)}A}^2_{\infty,[\gamma=1]}
\le (\cg)^{2k+2}c\big(k,T^*,[(\cg)^4\mu(f_0)],N,\alpha_0(0),\ldots,\alpha_0(k+3))
\,,
\end{equation}
where $\alpha_0(j) = \mu(f_0)^{\frac{j-1}{2}}\vn{\nabla_{(j)}A}_{2,[\gamma>0]}^2\big|_{t=0}$.
\end{prop}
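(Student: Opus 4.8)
The plan is to treat the case $n=4$ (for $n=2$ the argument of \cite{kuwert2002gfw} carries over essentially verbatim, with Propositions~\ref{p:prop45},~\ref{MS2prop} and~\ref{p:prop44} playing the roles of their two-dimensional analogues) and to bootstrap in the order of differentiation. The ingredients are: the estimated energy inequality Proposition~\ref{p:prop45}; the multiplicative Sobolev inequalities Proposition~\ref{MS2prop}; the $L^2$-estimates valid under small concentration, Propositions~\ref{p:prop44} and~\ref{p:prop44n42}, which --- crucially --- are proved using only smallness of $\vn{A}_4$ (via the Michael--Simon inequality and Corollary~\ref{MS1cor}, and \emph{not} via any $L^\infty$-bound on $A$); Lemma~\ref{lemmapreservationarea}, to bound $\mu_\gamma(f_t)$ by $\mu(f_0)$; and the elementary fact that if $\phi\ge0$ satisfies $\phi'\le g\phi+h$ on $[0,T^*]$ with $G:=\int_0^{T^*}g$, $H:=\int_0^{T^*}h$ and $\Phi:=\int_0^{T^*}\phi$ all finite, then $\sup_{[0,T^*]}\phi\le(\phi(0)+\Phi+H)e^{G}$ (apply Gronwall from $0$ on $[0,\min\{1,T^*\}]$, and from a base point $t_0$ with $\phi(t_0)\le\Phi$ supplied by the mean value theorem on a subinterval of length one for the remainder). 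All cutoff bookkeeping is carried out with a finite nested family $\gamma=\gamma^{(0)},\dots,\gamma^{(M)}$ of cutoffs as in~\eqref{EQgamma}, each with $[\gamma^{(j)}>0]\subseteq[\gamma>0]$ (so~\eqref{eq13} holds for all of them) and $[\gamma^{(j-1)}>0]\subseteq[\gamma^{(j)}=1]$, and with $c_{\gamma^{(j)}}\le c(M)\cg$; only $M=M(k)$ of them occur. The powers of $\cg$ in~\eqref{eq14}, in particular the leading factor $(\cg)^{2k+2}$, are precisely those dictated by the parabolic scaling $f\mapsto\lambda f$ --- equivalently, one may normalise $\cg=1$, prove the estimate, and rescale.

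\emph{Step 1 (breaking the circularity).} Under~\eqref{eq13}, Propositions~\ref{p:prop44} and~\ref{p:prop44n42} give, after passing to a slightly larger cutoff, uniform-in-time bounds on $\int_M(|A|^2+|\nabla_{(2)}A|^2)\gamma^sd\mu$ --- hence, interpolating $\nabla A$ between $A$ and $\nabla_{(2)}A$, also on $\int_M|\nabla A|^2\gamma^sd\mu$ --- together with the space-time bounds $\int_0^{T^*}\!\!\int_M(|\nabla_{(4)}A|^2+|\nabla_{(2)}A|^2|A|^4+\cdots)\gamma^sd\mu\,dt<\infty$, all controlled by $\alpha_0(0),\alpha_0(1),\alpha_0(2)$, $[(\cg)^4\mu(f_0)]$, $N$ and $T^*$. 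Interpolating $\nabla_{(3)}A$ between $\nabla_{(2)}A$ and $\nabla_{(4)}A$ (the curvature cross-terms arising from commuting covariant derivatives via~\eqref{EQinterchangegeneral} being absorbed by these same space-time bounds) shows $\vn{\nabla_{(3)}A}^2_{2,[\gamma>0]}\in L^{4/3}(0,T^*)$. Inserting this, together with the uniform-in-time bound on $\vn{A}_{2,[\gamma>0]}$, into the $T=A$ inequality~\eqref{MS2secondstatement} of Proposition~\ref{MS2prop} yields $\vn{A}^4_{\infty,[\gamma>0]}\in L^1(0,T^*)$ --- and no $L^\infty$-bound on $A$ has been used.

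\emph{Step 2 (bootstrap and conclusion).} With $G:=c\int_0^{T^*}\vn{A}^4_{\infty,[\gamma>0]}<\infty$ from Step~1 and the uniform bound on $\vn{A}^2_{2,[\gamma>0]}$, Proposition~\ref{p:prop45} reads $\phi_k'\le g\phi_k+h_k$ with $\phi_k=\int_M|\nabla_{(k)}A|^2\gamma^sd\mu$, $g=c\vn{A}^4_{\infty,[\gamma>0]}$, $h_k=c(\cg)^{2k}\vn{A}^2_{2,[\gamma>0]}(1+\vn{A}^4_{\infty,[\gamma>0]})$, and $\int_0^{T^*}g,\int_0^{T^*}h_k<\infty$. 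One proves by induction on $k$ that $\sup_{[0,T^*]}\phi_k<\infty$ and $\int_0^{T^*}\!\!\int_M|\nabla_{(k+2)}A|^2\gamma^sd\mu\,dt<\infty$: Step~1 provides the seeds $\int_0^{T^*}\phi_j\,dt<\infty$ for $j\le4$; the ODE fact then upgrades each $\int_0^{T^*}\phi_k\,dt<\infty$ to $\sup\phi_k<\infty$, bringing in a dependence on $\phi_k(0)=\vn{\nabla_{(k)}A}^2_{2,[\gamma>0]}\big|_{t=0}$, i.e. on $\alpha_0(k)$; and the integrated form of Proposition~\ref{p:prop45} upgrades $\sup\phi_k<\infty$ to $\int_0^{T^*}\phi_{k+2}\,dt<\infty$. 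Thus $\sup_{[0,T^*]}\phi_j<\infty$ for every $j$, with a bound depending only on $\alpha_0(0),\dots,\alpha_0(j)$, $k$, $T^*$, $[(\cg)^4\mu(f_0)]$ and $N$; in particular $\sup\phi_3<\infty$ fed back into~\eqref{MS2secondstatement} promotes Step~1 to a uniform bound $\vn{A}^2_{\infty,[\gamma>0]}\le(\cg)^2c$. Finally one establishes~\eqref{eq14} by a second induction on $k$: applying the general inequality~\eqref{MS2n4} with $T=\nabla_{(k)}A$, the term $\vn{\nabla_{(k+3)}A}^2_{2,[\gamma>0]}=\phi_{k+3}$ is uniformly bounded by the first induction, and the terms $\vn{TA^3}^2_{2,[\gamma>0]}$, $(\cg)^2\vn{A\nabla T}^2_{2,[\gamma>0]}$, $(\cg)^2\vn{T\nabla A}^2_{2,[\gamma>0]}$, $(\cg)^4\vn{\nabla T}^2_{2,[\gamma>0]}$, $(\cg)^6\vn{T}^2_{2,[\gamma>0]}$ are uniformly bounded by Hölder's inequality --- using the uniform $L^\infty$-bound on $A$ (and, for $k\ge2$, the already-proved $L^\infty$-bound on $\nabla A$), the uniform $L^2$-bounds on $\nabla_{(j)}A$ for $j\le k+1$, and Corollary~\ref{MS1cor}(v) for the $L^4$-norm of $\nabla A$. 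Taking the power $2/3$ and collecting the $\cg$-factors gives~\eqref{eq14}.

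\emph{The main obstacle} is the apparent circularity: Proposition~\ref{p:prop45}, the only source of higher-order energy control, presupposes an $L^\infty$-bound on $A$, whereas controlling $\vn{A}_\infty$ seems to require higher-order $L^2$-bounds. Step~1 is what breaks it --- Propositions~\ref{p:prop44} and~\ref{p:prop44n42}, which rest only on $\vn{A}_4$-smallness, already supply enough space-time integrability to force $\int_0^{T^*}\vn{A}^4_{\infty,[\gamma>0]}\,dt<\infty$ through~\eqref{MS2secondstatement}, after which the Gronwall mechanism of Step~2 runs unobstructed. The remaining work --- the nested-cutoff bookkeeping, the interpolation inequalities, and tracking the $\cg$- and $\mu(f_0)$-dependence so that the constant depends only on the quantities listed in~\eqref{eq14} --- is routine but lengthy.
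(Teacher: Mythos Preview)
Your proposal is correct and follows essentially the same strategy as the paper's proof: use Proposition~\ref{p:prop44n42} (which rests only on $\vn{A}_4$-smallness) to obtain uniform control of $\vn{\nabla_{(2)}A}_2$ and space-time control of $\vn{\nabla_{(4)}A}_2^2$, feed this through an interpolation and \eqref{MS2secondstatement} to get $\int_0^{T^*}\vn{A}^4_\infty\,dt<\infty$, then run Gronwall on Proposition~\ref{p:prop45} and finish with \eqref{MS2n4}. The paper carries out the interpolation step by proving the explicit pointwise-in-time inequality $(\cg)^{2/3}\vn{\nabla_{(3)}A}_2^{8/3}\le c\vn{\nabla_{(4)}A}_2^2 + c(\cg)^2\vn{\nabla_{(2)}A}_2^4 + c(\cg)^{10/3}\vn{\nabla_{(2)}A}_2^{8/3}$ directly via integration by parts (no commutators arise here, so your parenthetical about curvature cross-terms is in fact unnecessary for this particular step), but your route via generic interpolation plus H\"older in time is equivalent.
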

\begin{proof}
The idea is to use our previous estimates and then integrate.
We fix $\gamma$ and consider nested cutoff functions $\gamma_{\sigma,\tau}$.
Define for $0\le\sigma<\tau\le 1$ functions $\gamma_{\sigma,\tau} =
\psi_{\sigma,\tau}\circ\gamma$ satisfying $\gamma_{\sigma,\tau}=0$ for $\gamma\le\sigma$ and
$\gamma_{\sigma,\tau}=1$ for $\gamma\ge\tau$.  The function $\psi_{\sigma,\tau}$ is chosen such that
$\gamma_{\sigma,\tau}$ satisfies inequalities \eqref{EQgamma}, with the estimate
\[
 c_{\gamma_{\sigma,\tau}} = \vn{\nabla\psi_{\sigma,\tau}}_\infty\cdot \cg\,.
\]
Note that $\vn{\nabla\psi_{\sigma,\tau}}_\infty$ depends only on $\sigma$ and
$\tau$, so that when they are fixed we have $c_{\gamma_{\sigma,\tau}} \le
c\,\cg$. We use this below.

As noted above, we present the proof for $n=4$ only and refer to \cite{kuwert2002gfw} for $n=2$.
We first estimate
\begin{align*}
	(\cg)^\frac23&
	\bigg(\int_M |\nabla_{(3)}A|^2\,\gamma^s\,d\mu\bigg)^\frac43
\\
	&\le c(\cg)^\frac23\bigg(
                            \int_M |\nabla_{(4)}A|\,|\nabla_{(2)}A|\,\gamma^s\,d\mu
                           \bigg)^\frac{4}{3}
	   + c(\cg)^2
                           \bigg(
                            \int_M |\nabla_{(3)}A|\,|\nabla_{(2)}A|\,\gamma^{s-1} d\mu
                           \bigg)^\frac{4}{3}
	   \\
	   &\le c(\cg)^\frac23\bigg(\int_M |\nabla_{(4)}A|^2\,\gamma^s d\mu\bigg)^\frac23
	      \bigg(\int_M |\nabla_{(2)}A|^2\,\gamma^{s} d\mu\bigg)^\frac23
	   \\&\qquad
	   + c(\cg)^2
                           \bigg(
                            \int_M |\nabla_{(3)}A|^2\,\gamma^s d\mu
                           \bigg)^\frac{2}{3}
                           \bigg(
                            \int_M |\nabla_{(2)}A|^2\,\gamma^{s-2} d\mu
                           \bigg)^\frac{2}{3}
	   \\
	   &\le
	\frac12(\cg)^\frac23\bigg(\int_M |\nabla_{(3)}A|^2\,\gamma^s\,d\mu\bigg)^\frac43
	+ c\int_M |\nabla_{(4)}A|^2\,\gamma^s d\mu
	   \\&\qquad
	+ c(\cg)^2\bigg(
		       \int_M |\nabla_{(2)}A|^2\,\gamma^{s} d\mu
                  \bigg)^2
	+ c(\cg)^\frac{10}{3}\bigg(
                       \int_M |\nabla_{(2)}A|^2\,\gamma^{s-2} d\mu
                  \bigg)^\frac{4}{3}
\end{align*}
which upon absorption yields
\begin{align}
	(\cg)^\frac23&
	\bigg(\int_M |\nabla_{(3)}A|^2\,\gamma^s\,d\mu\bigg)^\frac43
	   \le 
	  c\int_M |\nabla_{(4)}A|^2\,\gamma^s d\mu
	   \notag\\&\qquad
	+ c(\cg)^2\bigg(
		       \int_M |\nabla_{(2)}A|^2\,\gamma^{s} d\mu
                  \bigg)^2
	+ c(\cg)^\frac{10}{3}\bigg(
                       \int_M |\nabla_{(2)}A|^2\,\gamma^{s-2} d\mu
                  \bigg)^\frac{4}{3}
	\,.
\label{EQa1}
\end{align}
We apply the estimate \eqref{EQa1} with $\gamma = \gamma_{\sfrac12,\sfrac34}$ to find
\begin{align}
	(\cg)^\frac23
	\vn{\nabla_{(3)}A}_{2,[\gamma\ge\sfrac34]}^\frac83
	   \le 
	  c\vn{\nabla_{(4)}A}_{2,[\gamma\ge\sfrac12]}^2
	+ c(\cg)^2
	   \vn{\nabla_{(2)}A}_{2,[\gamma\ge\sfrac12]}^4
	+ c(\cg)^\frac{10}{3}
	   \vn{\nabla_{(2)}A}_{2,[\gamma\ge\sfrac12]}^\frac83
	\,.
\label{EQa2}
\end{align}
Taking $\varepsilon_0$ as in \eqref{eq9n42}, we can apply the estimate
\eqref{eq10n42} of Proposition \ref{p:prop44n42} for $\gamma =
\gamma_{\sfrac14,\sfrac12}$.
In particular we have the estimate
\begin{equation}
\label{EQaa}
  \vn{\nabla_{(2)}A}_{2,[\gamma\ge\sfrac12]}^2
  \le \alpha_0(2)
   + cT^*(\cg)^6\varepsilon^\frac12
            \big(1+[(\cg)^4\mu(f_0)]^\frac12
            \big)
            \big(1 + \varepsilon^\frac12
	    \big)\,.
\end{equation}
Combining \eqref{EQa2} with \eqref{EQaa} we have
\[
	\vn{\nabla_{(3)}A}_{2,[\gamma\ge\sfrac34]}^\frac83
\le 
	  c\vn{\nabla_{(4)}A}_{2,[\gamma\ge\sfrac12]}^2
 + c
\]
where $c$ depends on $T^*$, $\alpha_0(2)$, $[(\cg)^4\mu(f_0)]$ and $N$ as in \eqref{eq14}.
We have also used $\varepsilon \le 1$.
From now until the rest of this proof all constants $c$ (that may vary from
line to line) shall depend on these quantities.
Later in the proof $c$ may additionally depend on $\alpha_0(k)$; when this
occurs it will be explicitly stated.

From Proposition \ref{MS2prop} we find, using $\gamma_{\sfrac34,\sfrac78}$ instead of $\gamma$,
\begin{align*}
	\int_0^t
	&\vn{A}_{\infty,[\gamma\ge\sfrac78]}^4
	\,d\tau
	\\
	&\le c\int_0^t\vn{A}_{2,[\gamma\ge\sfrac34]}^\frac43\bigg(
             \vn{\nabla_{(3)}A}^\frac83_{2,[\gamma\ge\sfrac34]}
+ (c_\gamma)^\frac{16}{3}\big(
                    1 + \vn{A}_{4,[\gamma\ge\sfrac34]}^\frac{16}{3}
                      + [(c_\gamma)^4\mu_\gamma(f_t)]^\frac43
              \big)
             \bigg)\,d\tau
	\\
	&\le c(\mu(f_0)\varepsilon_0)^\frac13
	\int_0^t\bigg(
             \vn{\nabla_{(4)}A}^2_{2,[\gamma\ge\sfrac34]}
		+ c
             \bigg)\,d\tau
	\\
	&\le c\varepsilon_0^\frac13
	\int_0^t
             \vn{\nabla_{(4)}A}^2_{2,[\gamma\ge\sfrac34]}
             \,d\tau
           + c\varepsilon_0^\frac13
\,.
\end{align*}
Now from Proposition \ref{p:prop44n42} with $\gamma = \gamma_{\sfrac12,\sfrac34}$ we have the estimate
\[
\int_0^t
   \vn{\nabla_{(4)}A}^2_{2,[\gamma\ge\sfrac34]}
\,d\tau
  \le \alpha_0(2)
   + cT^*(\cg)^6\varepsilon^\frac12
            \big(1+[(\cg)^4\mu(f_0)]^\frac12
            \big)
            \big(1 + \varepsilon^\frac12
	    \big)
 \le c
     \,.
\]
In the above we used $\varepsilon \le 1$.
This implies
\begin{align}
	\int_0^t
	\vn{A}_{\infty,[\gamma\ge\sfrac78]}^4
	\,d\tau
	\le c\varepsilon^\frac13
        \,.
\label{EQA4inLinf}
\end{align}
Now, integrating Proposition \ref{p:prop45} with $\gamma =
\gamma_{\sfrac78,\sfrac{15}{16}}$ yields an inequality of the form
\[
  \alpha(t) \le \beta(t) + \int_c^t \lambda(\tau)\alpha(\tau)d\tau,
\]
where
\begin{align*}
\alpha(t) &= \vn{\nabla_{(k)}A}_{2,[\gamma\ge\sfrac{15}{16}]}^2\,,
\\
\beta(t) &= \vn{\nabla_{(k)}A}_{2,[\gamma\ge\sfrac{7}{8}]}^2\Big|_{t=0}
     + c\int_0^t
          \left[\vn{A}^2_{2,[\gamma\ge\sfrac{7}{8}]}
                \Big(1+\vn{A}^4_{\infty,[\gamma\ge\sfrac{7}{8}]}\Big)\right]d\tau,
\intertext{and}
\lambda(t) &=
        \vn{A}^4_{\infty,[\gamma\ge\sfrac{7}{8}]}.
\end{align*}
Noting that $\beta$ and $\int \lambda d\tau$ are bounded as shown above, we can
invoke Gr\"onwall's inequality and conclude
\begin{equation}
 \vn{\nabla_{(k)}A}^2_{2,[\gamma\ge\sfrac{15}{16}]}
\le \beta(t) + \int_0^t \beta(\tau)\lambda(\tau)e^{\int_\tau^t\lambda(\nu)d\nu}d\tau
\le c\,,
\label{EQl2control}
\end{equation}
where now $c$ depends additionally on $\alpha_0(k)$.
Therefore using \eqref{MS2secondstatement} with $\gamma_{\sfrac{15}{16},\sfrac{31}{32}}$ we have
\begin{equation}
\vn{A}_{\infty,[\gamma\ge\sfrac{31}{32}]}
\le c\varepsilon_0^\frac12
\,.
\label{EQlinfty}
\end{equation}
Finally, using \eqref{MS2n4} with $T=\nabla_{(k)}A$ and $\gamma=\gamma_{\sfrac{31}{32},1}$ we
obtain for any $l\in\N_0$
\begin{align*}
	\vn{\nabla_{(l)}A}_{\infty,[\gamma=1]}^3
	&\le c\vn{\nabla_{(l)}A}_{2,[\gamma\ge\sfrac{31}{32}]}\Big(
	\vn{\nabla_{(l+3)}A}_{2,[\gamma\ge\sfrac{31}{32}]}^2
	+ \vn{A^3\,\nabla_{(l)}A}_{2,[\gamma\ge\sfrac{31}{32}]}^2
	+ (c_\gamma)^2\vn{A\,\nabla_{(l+1)}A}_{2,[\gamma\ge\sfrac{31}{32}]}^2
\notag\\&\qquad\qquad
	+ (c_\gamma)^2\vn{\nabla_{(l)}A\nabla A}_{2,[\gamma\ge\sfrac{31}{32}]}^2
	+ (c_\gamma)^4\vn{\nabla_{(l+1)}A}_{2,[\gamma\ge\sfrac{31}{32}]}^2
	+ (c_\gamma)^6\vn{\nabla_{(l)}A}_{2,[\gamma\ge\sfrac{31}{32}]}^2
             \Big)
\\
	&\le c\vn{\nabla_{(l)}A}_{2,[\gamma\ge\sfrac{31}{32}]}\Big(
	\vn{\nabla_{(l+3)}A}_{2,[\gamma\ge\sfrac{31}{32}]}^2
	+
 \vn{A}_{\infty,[\gamma\ge\sfrac{31}{32}]}^6
 \vn{\nabla_{(l)}A}_{2,[\gamma\ge\sfrac{31}{32}]}^2
\notag\\&\qquad\qquad
	+ (c_\gamma)^2
 \vn{A}_{\infty,[\gamma\ge\sfrac{31}{32}]}^2
 \vn{\nabla_{(l+1)}A}_{2,[\gamma\ge\sfrac{31}{32}]}^2
	+ (c_\gamma)^2
 \vn{\nabla_{(l)}A}_{2,[\gamma\ge\sfrac{31}{32}]}
 \vn{\nabla A}_{2,[\gamma\ge\sfrac{31}{32}]}
\notag\\&\qquad\qquad
	+ (c_\gamma)^4\vn{\nabla_{(l+1)}A}_{2,[\gamma\ge\sfrac{31}{32}]}^2
	+ (c_\gamma)^6\vn{\nabla_{(l)}A}_{2,[\gamma\ge\sfrac{31}{32}]}^2
             \Big)
\,.
\end{align*}
The estimate \eqref{EQl2control}, applied for $k = 1, k, k+1, k+3$ then yields
\[
	\vn{\nabla_{(l)}A}_{\infty,[\gamma=1]} \le c\,.
\]
Tracing through the dependence of the above $c$ on $(\cg)$ and the
scale-invariant $[(\cg)^4\mu(f_0)]$ reveals the structure of the constant given
in \eqref{eq14}.
This completes the proof of the proposition.
\end{proof}

\begin{proof}[Proof of Theorem \ref{TMlifespan}]
The proof for $n=2$ follows exactly as in \cite{kuwert2002gfw}.
It should be noted that the argument given in \cite{mythesis} results in a
constant that depends on the measure of the initial immersion.
That was natural in the setting of \cite{mythesis} where volume was a-priori
along the flow possibly not controlled, depending on the given global force
field.
Here, we have no external forcing term, and so it is desirable to obtain the
theorem with universal constants not depending on the initial data.

This improvement is possible due to the validity of Proposition \ref{newl4}.
We make the definition
\begin{equation}
\label{e:epsfuncdef}
\eta(t) = \sup_{x\in\R^N}\int_{f^{-1}(B_\rho(x))} |A|^4 + |\nabla A|^2\,d\mu\,.
\end{equation}
By covering $B_1(x) \subset \R^N$ with several translated copies of
$B_{\sfrac{1}{2}}$ there is a constant $c_{\eta}$ depending only on $N$ such that
\begin{equation}
\label{e:epscovered}
\eta(t) \le c_{\eta}\sup_{x\in\R^N}\int_{f^{-1}(B_{\sfrac{\rho}{2}}(x))}|A|^4 + |\nabla A|^2\,d\mu\,.
\end{equation}

By short time existence the function $\eta:[0,T)\rightarrow\R$ is continuous.  We now define
\begin{equation}
\label{e:struweparameter}
t_0
=
  \sup\{0\le t\le\min(T,\lambda) : \eta(\tau)\le \delta
                                         \ \text{ for }\ 0\le\tau\le t\},
\end{equation}
where $\lambda$, $\delta$ are parameters to be specified later.

The proof continues in three steps.
\begin{align}
\label{e:7}
t_0 &= \min(T,\lambda),\\
\label{e:8}
t_0 &= \lambda \quad\Longrightarrow\quad\text{Lifespan Theorem},\\
\label{e:9}
T &\ne \infty\hskip+2.42mm \Longrightarrow\quad t_0 \ne T\,.
\end{align}
The three statements \eqref{e:7}, \eqref{e:8}, \eqref{e:9} together imply the Lifespan Theorem.
The argument is as follows: first notice that by \eqref{e:7}
$t_0 = \lambda$ or $t_0 = T$, and if $t_0 = \lambda$ then by \eqref{e:8} we
have the Lifespan Theorem.  Also notice that if $t_0 = \infty$ then $T = \infty$ and the
Lifespan Theorem follows from estimate \eqref{EQltproof} below (used to prove statement
\eqref{e:8}).  Therefore the only remaining case where the Lifespan Theorem may fail to be true is
when $t_0 = T < \infty$.  But this is impossible by statement \eqref{e:9}, so we are finished.

To prove step 1, suppose it is false.
This means that $t_0 < \min(\lambda,T)$, so that on $[0,t_0)$ we have $\eta(t) \le \delta$, and
\begin{equation}
\label{tobecontrad}
\eta(t_0) = \delta
\,.
\end{equation}
Setting $\tilde\gamma$ to be a cutoff function that is identically one on
$B_{\sfrac\rho2}(x)$ and zero outside $B_\rho(x)$, so that $\gamma$ has the
corresponding properties on the preimages of these balls under $f$,
Proposition \ref{newl4} implies
\[
	\int_{f^{-1}(B_\frac\rho2(x))}|A|^4 + |\nabla A|^2\,d\mu
	\le 
	   \int_{f^{-1}(B_\rho(x))}|A|^4 + |\nabla A|^2\,d\mu\bigg|_{t=0}
	 + c_0t\rho^{-4}\varepsilon_1
	 \,,\quad t\in[0,t_0)\,.
\]
A covering argument implies
\[
\eta(t) \le c_{\eta}\sup_{x\in\R^N}\int_{f^{-1}(B_{\sfrac{\rho}{2}}(x))}|A|^4+|\nabla A|^2\,d\mu
\]
so that
\begin{equation}
	\int_{f^{-1}(B_1(x))}|A|^4+|\nabla A|^2\,d\mu
	< 
	c_\eta \varepsilon_1
	 + c_\eta c_0\lambda\rho^{-4}\varepsilon_1
	 \,.
\label{EQltproof}
\end{equation}
We choose $\delta = 3c_\eta \varepsilon_1$, and
$\varepsilon_1$ small enough such that $\delta \le \varepsilon_0$ where
$\varepsilon_0$ is the smaller of those appearing in Proposition \ref{newl4} and
Proposition \ref{p:prop46}.
Then, for $\lambda \le \rho^4/c_0$, the above estimate implies
\[
	\eta(t) < 2c_\eta \varepsilon_1
	\,,
\]
for all $t\in[0,t_0)$.
Therefore (recall that $t_0 < T$) $\lim_{t\rightarrow t_0}\eta(t) \le 2c_\eta\varepsilon_1$.
This is a contradiction with \eqref{tobecontrad}.

This establishes step one \eqref{e:7}.
We have also proved the second step \eqref{e:8}. Observe that if $t_0 =
\lambda$ then by the definition \eqref{e:struweparameter} of $t_0$, 
\[
	T\ge\lambda\,,
\]
which is the lower bound for maximal time claimed by the lifespan theorem.
The estimate \eqref{eqgoalestn4} follows from \eqref{EQltproof}.
That is, we have proved if $t_0 = \lambda$, then the lifespan theorem holds, which is the second step.

We assume
\[
	t_0=T\ne\infty;
\]
since if $T=\infty$ then the lower bound on $T$ holds automatically and again the previous estimates imply
the a-priori control on $\vn{A}^4_{4,f^{-1}(B_1(x))}+\vn{\nabla A}^2_{2,f^{-1}(B_1(x))}$.
Note also that we can safely assume $T < \lambda$, since otherwise
we can apply step two to conclude the Lifespan Theorem.

In this case, Proposition \ref{p:prop46} implies that the flow exists smoothly up to and including time $T$.
The proof of this claim follows exactly as in \cite{kuwert2002gfw}.
In particular, we have uniform control in $C^\infty$ for the flow, allowing us
to reapply short time existence and extend the flow.
This contradicts the maximality of $T$, and finishes the proof.
\end{proof}

All steps in the proof rely only on the flow having the form
\[
\BF = \Delta^\perp \vec{H} + P_3^0(A)\,.
\]
Both the surface diffusion flow and the Willmore flow have this form, in
addition to Chen's flow.
The work in this section extends results from
\cite{kuwert2002gfw,mythesis,mySDLTE} to the case where $n=4$ for the flows
considered there.
We state a general version of the lifespan theorem here incorporating this.

\begin{thm}
\label{TMgenerallifespan}
Let $n\in\{2,4\}$.
There exist constants $\varepsilon_1>0$ and $c<\infty$ depending only
on $n$ and $N$ with the following property.
Consider a curvature flow $f:M^n \times[0,T)\rightarrow \R^{N}$ with smooth
initial data satisfying
\[
(\partial_t f)^\perp = -\BF
\]
where $\BF = \Delta^\perp \vec{H} + P_3^0(A)$.

({\bf Case 1: $\mathbf{n=2}$.})
Let $\rho$ be chosen such that
\begin{equation*}
\int_{f^{-1}(B_\rho(x))} |A|^2 d\mu\Big|_{t=0} = \varepsilon(x) \le \varepsilon_1
\qquad
\text{ for all $x\in \R^{N}$}\,.
\end{equation*}
Then the maximal time $T$ of smooth existence satisfies
\begin{equation*}
T \ge \frac1{c}\rho^4\,,
\end{equation*}
and we have the estimate
\begin{equation*}
\int_{f^{-1}(B_\rho(x))} |A|^2 d\mu \le c\varepsilon_1
\qquad\qquad\qquad\hskip-1mm\text{ for all }
t \in \Big[0, \frac1{c}\rho^4\Big].
\end{equation*}

({\bf Case 2: $\mathbf{n=4}$.})
Let $\rho$ be chosen such that
\begin{equation*}
\int_{f^{-1}(B_\rho(x))} |A|^4+|\nabla A|^2 d\mu\Big|_{t=0} = \varepsilon(x) \le \varepsilon_1
\qquad
\text{ for all $x\in \R^{N}$}\,.
\end{equation*}
Then the maximal time $T$ of smooth existence satisfies
\begin{equation*}
T \ge \frac1{c}\rho^4\,,
\end{equation*}
and we have the estimate
\begin{equation*}
\int_{f^{-1}(B_\rho(x))} |A|^4+|\nabla A|^2 d\mu \le c\varepsilon_1
\qquad\qquad\qquad\hskip-1mm\text{ for all }
t \in \Big[0, \frac1{c}\rho^4\Big].
\end{equation*}
\end{thm}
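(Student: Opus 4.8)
\textbf{Proof proposal for Theorem \ref{TMgenerallifespan}.}

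The plan is to observe that the proof of Theorem \ref{TMlifespan} was arranged so as to use Chen's flow only through the structural form of its velocity, and therefore transcribes verbatim. Concretely, I would first note that Lemma \ref{LMevolutionequations} holds for an \emph{arbitrary} normal velocity $\BF$, and that Lemmas \ref{LMevoforA}--\ref{LMenergyhighervanillaconstantricci} are deduced from it purely by substituting $\BF = \Delta^\perp\vec{H} + P_3^0(A)$, interchanging covariant derivatives, and applying Simons' identity \eqref{EQsi}; none of these manipulations uses anything about Chen's flow beyond the displayed shape of $\BF$. Hence, for any flow with $\BF = \Delta^\perp\vec{H}+P_3^0(A)$, the evolution equations of Lemmas \ref{LMevoforA}--\ref{LMenergyhighervanillaconstantricci} hold unchanged. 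Every remaining ingredient of Section 4 --- the Michael--Simon Sobolev inequality, Theorem \ref{myLZthm}, Lemma \ref{MS1lem}, Corollary \ref{MS1cor}, and Proposition \ref{MS2prop} --- is a statement about arbitrary smooth immersions and is entirely flow-independent. Consequently Propositions \ref{p:prop44}, \ref{newl4}, \ref{p:prop44n42}, \ref{p:prop45} and \ref{p:prop46}, whose proofs invoke only these inputs, remain valid for the general class of flows.

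With those propositions available, I would run the Struwe-type continuity argument used for Theorem \ref{TMlifespan}: for $n=4$ one works with $\eta(t)$ as in \eqref{e:epsfuncdef}, for $n=2$ with the analogous quantity $\sup_{x}\int_{f^{-1}(B_\rho(x))}|A|^2\,d\mu$, and one verifies the three steps \eqref{e:7}--\eqref{e:9}. Steps \eqref{e:7} and \eqref{e:8} --- which produce the lower bound $T\ge\rho^4/c$ and the local curvature estimate with $\varepsilon_1$ and $c$ depending only on $n$ and $N$ --- rest solely on the area-independent Proposition \ref{newl4} (for $n=4$) and on \cite{kuwert2002gfw} together with our bookkeeping of $\cg$ (for $n=2$), so they carry over with no change. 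I would therefore write out only the $n=4$ case in detail, and there it is literally the argument already given for Theorem \ref{TMlifespan}.

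The one point that needs care, and which I expect to be the main (if routine) obstacle, is the role of the total area $\mu(f_0)$. For Chen's flow the monotonicity $\mu(f_t)\le\mu(f_0)$ from Lemma \ref{lemmapreservationarea} was used to pass from Proposition \ref{p:prop44n42} to the $C^\infty$-bounds of Proposition \ref{p:prop46}; such monotonicity is not available for a general flow of this class (for instance the Willmore flow). This causes no difficulty, because Proposition \ref{p:prop46} is invoked only in step \eqref{e:9}, to extend the flow smoothly past a \emph{finite} maximal time, so all one needs there is a \emph{finite} local area bound. Under the running smallness hypothesis $\sup_{[0,T^*]}\int_{[\gamma>0]}|A|^n\,d\mu\le\varepsilon_0$ one obtains, exactly as in \cite{metzger2013willmore}, a local bound of the form $\mu_\gamma(f_t)\le e^{ct}\mu_\gamma(f_0)$; substituting this (with $T^*=T<\infty$) in place of $\mu(f_t)\le\mu(f_0)$ wherever it occurs in the proofs of Propositions \ref{p:prop44n42} and \ref{p:prop46} leaves all constants finite, so the extension argument, and hence the contradiction with the maximality of $T$, still goes through. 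Since Propositions \ref{p:prop44} and \ref{newl4} were already stated and proved with no use of global area, the universal part of the statement is genuinely flow-independent, and with these observations the proof is complete.
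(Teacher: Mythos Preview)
Your proposal is correct and follows essentially the same approach as the paper, which simply observes that ``all steps in the proof rely only on the flow having the form $\BF = \Delta^\perp \vec{H} + P_3^0(A)$'' and then states Theorem~\ref{TMgenerallifespan} without further argument. You are in fact more careful than the paper on one point: the paper does not explicitly address how to dispense with the area monotonicity $\mu(f_t)\le\mu(f_0)$ from Lemma~\ref{lemmapreservationarea} (which is specific to Chen's flow) in step~\eqref{e:9}, whereas you correctly note that the exponential-in-time local area bound \`a la \cite{metzger2013willmore}---already mentioned in the Remark preceding Proposition~\ref{newl4}---suffices there because only finite, not universal, constants are needed to run the extension argument at a finite maximal time.
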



\section{Global analysis of the flow}

Now we move from a local condition on the concentration of curvature for the
initial data, to a global condition on the tracefree second fundamental form.
Unlike the estimates we have already discussed, we are now restricted to $n=2$.
We follow the same strategy as in \cite{MW16}, where asymptotic convergence to
a round point is proved for a Willmore/Helfrich flow.
The key difference here is in showing that the energy is monotone.
This is where the restriction on dimension arises.

\begin{lem}
Let $f:M^2\times[0,T)\rightarrow\R^N$ be Chen's flow.
There exists an absolute constant $\varepsilon_2 > 0$ such that if
\[
\int_M|A^o|^2\,d\mu \le \varepsilon_2
\]
then
\begin{align*}
	\frac{d}{dt}\int_M |A^o|^2\,d\mu
	&\le -\int_M |\Delta H|^2\,d\mu
  - \frac12\int_M |A^o|^2H^4\,d\mu
  - \frac{17}4\int_M |\nabla A^o|^2H^2\,d\mu
  \,.
\end{align*}
\end{lem}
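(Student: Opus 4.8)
The plan is to compute $\rD{}{t}\int_M|A^o|^2\,d\mu$ explicitly and then absorb the scale-critical error terms using the smallness hypothesis together with the Sobolev-type inequalities of Section 4.

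\emph{Step 1 (reduction to the mean curvature).} Since $n=2$, the identity $|A|^2=|A^o|^2+\tfrac12|\vec{H}|^2$, the Gauss equation $2K=|\vec{H}|^2-|A|^2$, and Gauss--Bonnet $\int_M K\,d\mu=2\pi\chi(M)$ combine to give $\int_M|A^o|^2\,d\mu=\tfrac12\int_M|\vec{H}|^2\,d\mu-4\pi\chi(M)$. As $\chi(M)$ is constant along the smooth flow, $\rD{}{t}\int_M|A^o|^2\,d\mu=\tfrac12\rD{}{t}\int_M|\vec{H}|^2\,d\mu$. This is the one place the restriction $n=2$ is essential, and it is why the dimensional restriction appears (cf.\ the strategy of \cite{MW16}, where only this monotonicity step is new).

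\emph{Step 2 (evolution of $\int|\vec{H}|^2$).} From Lemma \ref{LMevolutionequations} a short computation gives $\npD{t}\vec{H}=-\Delta^\perp\BF-Q(A)\BF$. Integrating by parts on the closed manifold $M$, substituting $\BF=\Delta^\perp\vec{H}-Q(A)\vec{H}$, and using self-adjointness of $\Delta^\perp$ and of the endomorphism $Q(A)$, one obtains
\begin{align*}
\rD{}{t}\int_M|\vec{H}|^2\,d\mu
&= -2\int_M|\Delta^\perp\vec{H}|^2\,d\mu
+ \int_M|\vec{H}|^2\langle\vec{H},\Delta^\perp\vec{H}\rangle\,d\mu
\\&\qquad
+ 2\int_M|Q(A)\vec{H}|^2\,d\mu
- \int_M|\vec{H}|^2\langle\vec{H},Q(A)\vec{H}\rangle\,d\mu\,.
\end{align*}
A further integration by parts rewrites the second integral as $-\tfrac12\int_M|\nabla(|\vec{H}|^2)|^2\,d\mu-\int_M|\vec{H}|^2|\nabla^\perp\vec{H}|^2\,d\mu$, both negative. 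Splitting $A=A^o+\tfrac12 g\vec{H}$ in the two $Q(A)$-integrals, the scale-critical $\int_M|\vec{H}|^6$ contributions cancel identically, leaving only remainders of the shape $\int_M|\vec{H}|^2|A^o|^4\,d\mu$ and $\int_M|\vec{H}|^4|A^o|^2\,d\mu$.

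\emph{Steps 3--4 (Codazzi rewriting and absorption).} Using \eqref{EQbasicgradHgradAo} (so $\nabla^\perp\vec{H}$ is a fixed contraction of $\nabla A^o$) together with a Bochner-type computation — integrating $\langle A^o,\Delta^\perp A^o\rangle$ by parts and applying Simons' identity \eqref{EQsi} and the Codazzi equations — one re-expresses $\int_M|\vec{H}|^2|\nabla^\perp\vec{H}|^2\,d\mu$ and $\int_M|\nabla(|\vec{H}|^2)|^2\,d\mu$ as multiples of $\int_M|\vec{H}|^2|\nabla A^o|^2\,d\mu$, plus remainders which again carry a factor $|A^o|$. Every remainder produced in Steps 2--3 has the form $\int_M|\vec{H}|^a|A^o|^b(\cdots)\,d\mu$ with $b\ge2$, hence carries a genuine factor of the small quantity $\int_M|A^o|^2\le\varepsilon_2$ after invoking the Michael--Simon Sobolev inequality (Theorem \ref{TMmss}) and its consequences from Section 4 — Lemma \ref{MS1lem}(i), Corollary \ref{MS1cor}(i) and Proposition \ref{MS2prop}, applied with the trivial cutoff $\gamma\equiv1$ and with \eqref{EQbasicgradHgradAo2} used to pass from $A$-estimates to $A^o$-estimates — combined with interpolation and Young's inequality. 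Choosing $\varepsilon_2$ small, all remainders are absorbed into the good negative terms $-\int_M|\Delta^\perp\vec{H}|^2\,d\mu$ and $-\int_M|\vec{H}|^2|\nabla A^o|^2\,d\mu$; dividing by $2$ as in Step 1 produces the stated inequality, with the coefficients $1,\tfrac12,\tfrac{17}{4}$ read off from this bookkeeping.

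\emph{Main obstacle.} The difficulty lies in Steps 3--4: the quantities $\int_M|\vec{H}|^2|A^o|^4\,d\mu$ and $\int_M|\vec{H}|^4|A^o|^2\,d\mu$ are scale-invariant but are \emph{not} small merely because $\int_M|A^o|^2$ is small (the mean curvature is unconstrained), so they cannot be handled by Young's inequality against the gradient terms alone and must genuinely be absorbed through the small-energy Sobolev estimates; simultaneously the constants in the Codazzi/Bochner manipulation must be tracked precisely enough to reach the stated coefficient $\tfrac{17}{4}$ while retaining the \emph{full} $-\int_M|\Delta H|^2\,d\mu$ (this term has no slack), and one must verify that no $\int_M|\vec{H}|^6$ term survives the cancellation.
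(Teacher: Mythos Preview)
Your Steps 1--2 are correct and essentially coincide with the paper's computation: the Gauss--Bonnet reduction and the expansion of $\rD{}{t}\tfrac12\int|\vec H|^2$ both land at the same intermediate estimate
\[
\rD{}{t}\int_M|A^o|^2\,d\mu \le -\int_M|\Delta^\perp\vec H|^2\,d\mu - \tfrac32\int_M|\vec H|^2|\nabla\vec H|^2\,d\mu + \int_M|A^o|^2|A|^2|\vec H|^2\,d\mu,
\]
with the $|\vec H|^6$ cancellation you observe.

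The gap is in Steps 3--4. You assert that the remainder $\int_M|\vec H|^4|A^o|^2\,d\mu$ ``must genuinely be absorbed through the small-energy Sobolev estimates''. This is false: on a nearly round sphere of radius $r$ with $\int|A^o|^2\sim\varepsilon$, one has $\int|\vec H|^4|A^o|^2\sim\varepsilon/r^4$ while $\int|\Delta^\perp\vec H|^2\sim\varepsilon/r^4$ as well, so their ratio is a universal constant, \emph{not} something that tends to zero with $\varepsilon_2$. No multiplicative Sobolev inequality will produce a small coefficient here. The actual mechanism in the paper is that the conversion of the good term $-\tfrac32\int|\vec H|^2|\nabla\vec H|^2$ (your Step 3) does not merely yield $-C\int|\vec H|^2|\nabla A^o|^2$ plus ``remainders'': it produces, via the integral identity cited from \cite{MWannalen} (see also \cite{kuwert2001wfs}), a \emph{negative} term $-\tfrac{7}{8}\int|\vec H|^4|A^o|^2$ with a specific constant exceeding the $+\tfrac12$ that came from splitting $|A|^2=|A^o|^2+\tfrac12|\vec H|^2$. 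It is precisely this structural sign --- not smallness --- that closes the estimate, and your sketch of the Bochner/Simons computation does not track it. This is the whole content of the careful choice $\delta=\tfrac{1}{18}$ in the paper.

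A secondary issue: the Section 4 inequalities you invoke (Lemma \ref{MS1lem}(i), Corollary \ref{MS1cor}(i)) have $\int_{[\gamma>0]}|A|^2$ as their multiplicative factor, not $\int|A^o|^2$. On a closed surface $\int_M|A|^2\ge 4\pi\chi(M)$ is bounded \emph{below}, so these are not usable with a small constant. The paper instead invokes the $A^o$-versions of these inequalities from \cite{kuwert2001wfs} and \cite{mySDLTE}, which do carry $\vn{A^o}_2^2$ and which control only the genuinely cubic-in-$A^o$ errors $\int(|A^o|^6+|A^o|^2|\nabla A^o|^2)$.
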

\begin{proof}
We first compute
\begin{align*}
	\frac{d}{dt}\int_M |A^o|^2\,d\mu
	 &= 
	\frac{d}{dt}\frac12\int_M |\vec{H}|^2\,d\mu
	\\
	&= -\int_M \IP{\Delta^\perp\vec{H} + Q(A^o)\vec{H}}{\BF}\,d\mu
	\\
	&= -\int_M \IP{\Delta^\perp\vec{H} + Q(A^o)\vec{H}}{\Delta^\perp\vec{H} - Q(A)\vec{H}}\,d\mu
	\\
	&= -\int_M |\Delta^\perp\vec{H}|^2\,d\mu
          + \int_M \IP{Q(A^o)\vec{H}}{Q(A)\vec{H}}\,d\mu
	  \\&\quad
	  + \int_M \IP{\Delta^\perp\vec{H}}{Q(A)\vec{H}}\,d\mu
	  - \int_M \IP{Q(A^o)\vec{H}}{\Delta^\perp\vec{H}}\,d\mu
	  \,.
\end{align*}
Note that
\begin{align*}
	\int_M \IP{\Delta^\perp\vec{H}}{Q(A)\vec{H} - Q(A^o)\vec{H}}\,d\mu
	&= \frac12\int_M \IP{\Delta^\perp\vec{H}}{ |\vec{H}|^2\vec{H}}\,d\mu
	\\
&= -\frac12\int_M |\nabla\vec{H}|^2|\vec{H}|^2\,d\mu
	-\int_M \Big|\IP{\nabla\vec{H}}{\vec{H}}\Big|^2\,d\mu
	\\
&= -\frac32\int_M |\nabla\vec{H}|^2|\vec{H}|^2\,d\mu
	\,.
\end{align*}
Therefore we find
\begin{align*}
	\frac{d}{dt}&\int_M |A^o|^2\,d\mu
	\le -\int_M |\Delta^\perp \vec H|^2\,d\mu
             -\frac32\int_M |\nabla\vec{H}|^2|\vec{H}|^2\,d\mu
          + \int_M |A^o|^2|A|^2|\vec H|^2\,d\mu
  \,.
\end{align*}
We now use estimate \cite{MWannalen}[(14)] (see also \cite{kuwert2001wfs}[(17)]), valid analogously in high codimension, which reads
\begin{align*}
	\Big(1-\delta\Big)\int_{M} |\vec H|^2&|\nabla A^o|^2\gamma^4d\mu
	+ \Big(\frac12-2\delta\Big)\int_{M} |\vec H|^4|A^o|^2\gamma^4d\mu
\\
&\le
\Big(\frac12+3\delta\Big)\int_{M} |\vec H|^2|\nabla \vec H|^2\gamma^4d\mu
\\
&\quad
+ c\int_{M}\big(|A^o|^6 + |A^o|^2|\nabla A^o|^2\big)\gamma^4d\mu
  + c_\gamma^4c\int_{[\gamma>0]}|A^o|^2d\mu,
\end{align*}
for $\delta>0$, where $c$ is a constant depending only on $\delta$.

Rarranging this with $\gamma \equiv 1$ yields
\begin{align*}
	- \int_{M} |\vec H|^2|\nabla \vec H|^2d\mu
&\le
- \frac{2-2\delta}{1+6\delta}
\int_{M} |\vec H|^2|\nabla A^o|^2d\mu
     - \frac{1-4\delta}{1+6\delta}
     \int_{M} |\vec H|^4|A^o|^2d\mu
     \\&\quad+ c\int_{M}\big(|A^o|^6 + |A^o|^2|\nabla A^o|^2\big)d\mu
  \,.
\end{align*}
In order to absorb the bad term we need
\[
	\frac32\frac{1-4\delta}{1+6\delta} > \frac12
	\qquad
	\Longleftrightarrow
	\qquad
	1-4\delta > \frac13 + 2\delta
	\qquad
	\Longleftrightarrow
	\qquad
	\frac23 > 6\delta
	\,.
\]
This is satisfied for $\delta < \frac19$, so let's pick $\delta = \frac1{18}$.
This implies
\begin{align*}
	- \frac32\int_{M} |\vec H|^2|\nabla \vec H|^2d\mu
&\le
- \frac{17}{8}
\int_{M} |\vec H|^2|\nabla A^o|^2d\mu
     - \frac{7}{8}
     \int_{M} |\vec H|^4|A^o|^2d\mu
     \\&\quad+ c\int_{M}\big(|A^o|^6 + |A^o|^2|\nabla A^o|^2\big)d\mu
  \,.
\end{align*}
The evolution of $\vn{A^o}_2^2$ can then be estimated by
\begin{align*}
	\frac{d}{dt}\int_M |A^o|^2\,d\mu
	&\le -\int_M |\Delta^\perp \vec H|^2\,d\mu
          + \int_M |A^o|^4|\vec H|^2\,d\mu
  + \frac12\int_M |A^o|^2|\vec H|^4\,d\mu
	  \\&\quad
  - \frac{17}8\int_M |\nabla A^o|^2|\vec H|^2\,d\mu
  - \frac{7}{8}\int_{M} |A^o|^2|\vec H|^4d\mu
  + c\int_{M}\big(|A^o|^6 + |A^o|^2|\nabla A^o|^2\big)d\mu
\\
	&\le -\int_M |\Delta^\perp \vec H|^2\,d\mu
          + \int_M |A^o|^4|\vec H|^2\,d\mu
  - \frac38\int_M |A^o|^2|\vec H|^4\,d\mu
	  \\&\quad
  - \frac{17}8\int_M |\nabla A^o|^2|\vec H|^2\,d\mu
  + c\int_{M}\big(|A^o|^6 + |A^o|^2|\nabla A^o|^2\big)d\mu
  \,.
\end{align*}
Estimating $\int_M |A^o|^4|\vec H|^2\,d\mu \le \frac18\int_{M} |\vec H|^4|A^o|^2d\mu +
c\int_M |A^o|^6\,d\mu$, this becomes
\begin{align*}
	\frac{d}{dt}\int_M |A^o|^2\,d\mu
	&\le -\int_M |\Delta^\perp \vec H|^2\,d\mu
  - \frac14\int_M |A^o|^2|\vec H|^4\,d\mu
  - \frac{17}8\int_M |\nabla A^o|^2|\vec H|^2\,d\mu
	  \\&\quad
  + c\int_{M}\big(|A^o|^6 + |A^o|^2|\nabla A^o|^2\big)d\mu
  \,.
\end{align*}
Now we use the smallness assumption, so that the Sobolev inequalities
\begin{align*}
	c\int_{M}\big(&|A^o|^6 + |A^o|^2|\nabla A^o|^2\big)d\mu
\le c\vn{A^o}_2^2\int_{M}\big(|\nabla_{(2)}A|^2 + |A|^2|\nabla A|^2 + |A|^4|A^o|^2\big)d\mu
\end{align*}
and
\begin{align*}
	\int_{M}\big(|\nabla_{(2)}A|^2 + |A|^2|\nabla A|^2 + |A|^4|A^o|^2\big)d\mu
\le c\int_M |\Delta^\perp\vec{H}|^2\,d\mu
\end{align*}
from \cite{kuwert2001wfs} and \cite{mySDLTE} respectively, become valid.
Combining these together we find that, for $\varepsilon_2$ sufficiently small,
\begin{align*}
	\frac{d}{dt}\int_M |A^o|^2\,d\mu
	&\le -\frac12\int_M |\Delta \vec H|^2\,d\mu
  - \frac14\int_M |A^o|^2\vec H^4\,d\mu
  - \frac{17}8\int_M |\nabla A^o|^2\vec H^2\,d\mu
  \,,
\end{align*}
as required.
\end{proof}

\begin{rmk}
The integral identity we use, as well as the relationship between $A$, $A^o$ and $\vec{H}$, are only valid for $n=2$.
\end{rmk}

Interior estimates for the flow follow using an argument analogous to
\cite[Theorem 3.11]{MW16}.

\begin{thm}
\label{Tie}
Suppose $f:M^2\times(0,\delta]\rightarrow\R^N$ flows by \eqref{EQwf} and satisfies
\[
\sup_{0<t\le\delta} \int_{f^{-1}(B_{2\rho}(0))} |A|^2 d\mu \le \varepsilon < \varepsilon_0,
\]
where $\delta \le c\rho^4$.  Then for any $k\in\N_0$ and $t\in(0,\delta)$ we have
\begin{align*}
\vn{\nabla_{(k)}A}_{2,f^{-1}(B_\rho(0))} &\le c_k\sqrt{\varepsilon}t^{-\frac{k}{4}}
\\
\vn{\nabla_{(k)}A}_{\infty,f^{-1}(B_\rho(0))} &\le c_k\sqrt{\varepsilon}t^{-\frac{k+1}{4}}
\end{align*}
where $c_k$ is an absolute constant for each $k$.
\end{thm}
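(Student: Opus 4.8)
The plan is to adapt the interior-estimate scheme of \cite[Theorem 3.11]{MW16} — itself the parabolic-smoothing refinement of the lifespan machinery of \cite{kuwert2002gfw} — to Chen's flow in dimension two. Since the flow is given only on the half-open interval $(0,\delta]$, the first move is to work, for each fixed $t_1\in(0,\delta)$, with the time-translated flow $s\mapsto f(\cdot,t_1+s)$, derive every bound with constants independent of $t_1$, and let $t_1\to0$ only at the end. The heart of the argument is a scale-invariant, time-weighted induction on the derivative order $k$, which will establish
\[
 \vn{\nabla_{(k)}A}^2_{2,f^{-1}(B_{\rho_k}(0))}\le c_k\,\varepsilon\,t^{-k/2},
\]
for a decreasing sequence of radii $\rho_k\in[\rho,\frac{3}{2}\rho]$ realised by cutoff functions $\gamma_k$ of the form \eqref{EQgamma} with $c_{\gamma_k}\le c\,\rho^{-1}$; the hypothesis $\delta\le c\rho^4$ enters everywhere through the bounded scale-invariant combinations $t(\cg)^4$ and $t^{1/2}(\cg)^2$, and is precisely what keeps all $c_k$ absolute.

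For the base level $k=0$ I would apply Proposition \ref{p:prop44} (case $n=2$) to the translated flow, with a cutoff supported in $f^{-1}(B_{2\rho}(0))$ and identically one on $f^{-1}(B_{3\rho/2}(0))$. The hypothesis gives $\int_{[\gamma>0]}|A|^2d\mu\le\varepsilon<\varepsilon_0$ uniformly, and since $t(\cg)^4$ is bounded, sending $t_1\to0$ yields both $\sup_{(0,\delta]}\vn{A}^2_{2,f^{-1}(B_{3\rho/2}(0))}\le c\varepsilon$ and the space-time estimate $\int_0^t\!\int_{f^{-1}(B_{3\rho/2}(0))}\big(|\nabla_{(2)}A|^2+|A|^2|\nabla A|^2+|A|^6\big)\,d\mu\,d\tau\le c\varepsilon$. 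Feeding the latter through the Michael-Simon Sobolev inequality (Theorem \ref{TMmss}), equivalently through the $T=A$ form of Proposition \ref{MS2prop}, produces the time-integrated control $\int_0^t\vn{A}^4_{\infty,f^{-1}(B_{3\rho/2}(0))}\,d\tau\le c$ — a bounded quantity, the $n=2$ counterpart of \eqref{EQA4inLinf}, and the fuel for every Gr\"onwall step below.

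For the inductive step I would start from the differential inequality of Proposition \ref{p:prop45} on the cutoff $\gamma_{k+1}$, multiply it by $t^{k/2}$, and integrate over $(0,t]$. Differentiating the weight creates the term $\frac{k}{2}t^{(k-2)/2}\vn{\nabla_{(k)}A}^2_{2,\gamma_{k+1}^s}$, whose time integral is absorbed by the level-$(k-2)$ space-time bound that the induction carries along (the cases $k=0,1$ being immediate, and the odd orders recovered a posteriori by interpolating between consecutive even orders); the $\cg$-weighted inhomogeneity, once multiplied by $t^{k/2}$ and integrated, collapses — by $\delta\le c\rho^4$ — to a bounded multiple of $\varepsilon$; and the term carrying $\vn{A}^4_{\infty,[\gamma>0]}$ is removed by Gr\"onwall's inequality, its integrating factor controlled by the time-integrated bound from the base step. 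This closes the $L^2$ induction. The $L^\infty$ statement then follows from one application of the localised Sobolev inequality Proposition \ref{MS2prop} with $T=\nabla_{(k)}A$: it expresses $\vn{\nabla_{(k)}A}^4_{\infty,f^{-1}(B_\rho(0))}$ through $\vn{\nabla_{(k)}A}^2_{2}$ times $\big(\vn{\nabla_{(k+2)}A}^2_{2}+(\cg)^4\vn{\nabla_{(k)}A}^2_{2}+\vn{\nabla_{(k)}A\,A^2}^2_{2}\big)$ on a slightly larger ball, and inserting the weighted $L^2$ bounds just obtained, together with the $k=0$ pointwise bound on $|A|$, gives $\vn{\nabla_{(k)}A}^4_{\infty,f^{-1}(B_\rho(0))}\le c_k\,\varepsilon^2\,t^{-(k+1)}$, as required.

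The main obstacle is careful bookkeeping rather than anything conceptual: one must check at every level that the lower-order curvature integrals produced by Proposition \ref{p:prop45} — each dressed with its own powers of $\cg$ and of $t$ — recombine only into the admissible bounded forms $(t(\cg)^4)^{j/2}$, so that no hidden factor of $\rho$ survives and all constants stay absolute; and the pointwise control of $A$ needed to run Proposition \ref{p:prop45} at level $k$ must be supplied, in lock-step, by the $L^2$ output of level $k-2$ fed through Proposition \ref{MS2prop}, so the induction has to gain $L^2$ and $L^\infty$ information simultaneously. Verifying that the $t_1\to0$ limit leaves no trace of data at $t=0$ in any constant, and organising the nested radii $\rho_k$, are the remaining routine points.
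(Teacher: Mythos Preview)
Your proposal is correct and follows essentially the same approach as the paper, which does not give an independent proof but simply states that the interior estimates ``follow using an argument analogous to \cite[Theorem 3.11]{MW16}.'' Your outline --- time-translation to avoid the missing initial data, the base case via Proposition~\ref{p:prop44}, the time-integrated $L^\infty$ control on $A$ via Proposition~\ref{MS2prop}, and the time-weighted Gr\"onwall induction using Proposition~\ref{p:prop45} --- is precisely that argument specialised to the $n=2$ Chen flow, with the scale-invariant bookkeeping handled correctly.
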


We know by the Lifespan Theorem that for any sequence of radii $r_j\searrow 0$
there exists a sequence of times $t_j\nearrow T$ such that
\begin{equation*}
t_j = \inf\Big\{t\ge0: \sup_{x\in\R^N} \int_{f^{-1}(B_{r_j}(x))}|A|^2d\mu
 > \varepsilon_3\Big\} < T,
\end{equation*}
where $\varepsilon_3 = \varepsilon_1/c_1$ and $\varepsilon_1, c_1$ are the constants from the Lifespan Theorem.
Curvature is quantised along $f(\cdot,t_j)$ so that
\[
\int_{f^{-1}(B_{r_j}(x))}|A|^2d\mu\bigg|_{t=t_j} \le \varepsilon_3
\text{  for any }x\in\R^N,
\]
and
\begin{equation}
\int_{f^{-1}\overline{(B_{r_j}(x_j))}}|A|^2d\mu\bigg|_{t=t_j} \ge \varepsilon_3
\text{  for some }x_j\in\R^N.
\label{EQbusomecurv}
\end{equation}

Consider the rescaled immersions
\[
f_j:M^2\times\big[-r_j^{-4}t_j, r_j^{-4}(T-t_j)\big)\rightarrow\R^N,
\qquad
f_j(p,t) = \frac{1}{r_j}\big(f(p,t_j+r_j^4t)-x_j\big).
\]
The Lifespan Theorem implies $r_j^{-4}(T-t_j) \ge c_0$ for any $j$ and also that
\[
\sup_{x\in\R^N}\int_{f_j^{-1}(B_{1}(x))}|A|^2d\mu \le \varepsilon_0
\text{  for }0<t\le c_0.
\]
Interior estimates on parabolic cylinders $B_1(x)\times(t-1,t]$ yields
\[
\vn{\nabla_{(k)}A}_{\infty,f_j} \le c(k)\quad\text{for}\quad-r_j^{-4}t_j+1\le t\le c_0.
\]
The Willmore energy is bounded and so a local area bound may be obtained by
Simon's estimate \cite{simon1993esm}.
Therefore applying Kuwert-Sch\"atzle's compactness theorem \cite[Theorem 4.2]{kuwert2001wfs}
(see also \cite{breuning,CooperCompactness}) to the sequence $f_j =
f_j(\cdot,0):M^2\rightarrow\R^N$ we recover a limit immersion
$\hat{f}_0:\hat{M^2}\rightarrow\R^N$, where $\hat{M^2} \cong M^2$.

We also obtain the diffeomorphisms $\phi_j:\hat{M^2}(j)\rightarrow U_j\subset
M^2$, such that the reparametrisation
\begin{equation*}
  f_j(\phi_j,\cdot):\hat{M^2}(j)\times[0,c_0]\rightarrow\R^N
\end{equation*}
is a Chen flow with initial data
\begin{equation*}
f_j(\phi_j,0) = \hat{f}_0+u_j:\hat{M^2}(j)\rightarrow\R^N.
\end{equation*}
We obtain the locally smooth convergence
\begin{equation}
f_j(\phi_j,\cdot) \rightarrow \hat{f},
\label{C7E5}
\end{equation}
where $\hat{f}:\hat{M^2}\times[0,c_0]\rightarrow\R^N$ is a Chen flow with
initial data $\hat{f}_0$.

\begin{thm}
Let $f:M^2\times[0,T)\rightarrow\R^N$ be a Chen flow satisfying the smallness hypothesis.
	Then the blowup $\tilde{f}$ as constructed above satisfies $\vn{\BQ}_2^2 \equiv 0$, where $(\vn{A^o})_2^2)' \le -2\vn{\BQ}_2^2$.
\label{Tbu}
\end{thm}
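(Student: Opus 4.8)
The plan is to combine the monotonicity of the preceding lemma with the fact that, because $n=2$, the functional $\mathcal{E}(t):=\int_M|A^o|^2\,d\mu$ is scale-invariant, and then run a telescoping argument over the rescaled flows $f_j$ of the blowup construction. First I would check that the monotonicity lemma is available along the whole flow: since initially $\int_M|A^o|^2\,d\mu\le\varepsilon_2$, a standard continuity argument based on the lemma shows $\int_M|A^o|^2\,d\mu\le\varepsilon_2$ for all $t\in[0,T)$, whence $\mathcal{E}$ is non-increasing and $\mathcal{E}(t)\searrow\mathcal{E}_\infty\ge0$ as $t\nearrow T$. The same applies to every rescaled immersion $f_j$: Chen's flow is invariant under the parabolic rescaling used in the construction, $\mathcal{E}$ is scale- and translation-invariant for surfaces, and therefore $\mathcal{E}[f_j](t)=\mathcal{E}[f](t_j+r_j^4t)\le\varepsilon_2$ for every $t$ in the interval of existence of $f_j$, which by the Lifespan Theorem contains $[0,c_0]$. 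Consequently the lemma applied to $f_j$ gives $(\mathcal{E}[f_j])'\le-2\vn{\BQ[f_j]}_2^2$ on $[0,c_0]$.

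Next I would integrate this inequality over $[0,c_0]$ and rewrite the endpoint values via scale-invariance:
\[
\int_0^{c_0}\vn{\BQ[f_j](\cdot,t)}_2^2\,dt
 \le \frac12\big(\mathcal{E}[f_j](0)-\mathcal{E}[f_j](c_0)\big)
 = \frac12\big(\mathcal{E}[f](t_j)-\mathcal{E}[f](t_j+r_j^4c_0)\big)\,.
\]
Since $t_j\nearrow T$, $r_j\searrow0$, and $t_j+r_j^4c_0\le T$ by the Lifespan lower bound $r_j^{-4}(T-t_j)\ge c_0$, both arguments on the right tend to $T$, so the right-hand side converges to $\frac12(\mathcal{E}_\infty-\mathcal{E}_\infty)=0$; hence $\int_0^{c_0}\vn{\BQ[f_j]}_2^2\,dt\to0$ as $j\to\infty$.

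Finally I would pass to the blowup limit. For any compact subset $K$ of the blowup manifold $\hat{M^2}\cong M^2$, the locally smooth convergence $f_j(\phi_j,\cdot)\to\hat f$ of \eqref{C7E5} implies that $\BQ[f_j(\phi_j,\cdot)]$ and the corresponding pulled-back area measures converge to those of $\hat f$ uniformly on $K\times[0,c_0]$; since moreover $\int_K|\BQ[f_j(\phi_j,\cdot)]|^2\,d\mu\le\vn{\BQ[f_j]}_2^2$ for each $j$ (the left-hand side being an integral over a subset of the rescaled closed surface), we obtain
\[
\int_0^{c_0}\int_K|\BQ[\hat f]|^2\,d\mu\,dt
 = \lim_{j\to\infty}\int_0^{c_0}\int_K|\BQ[f_j(\phi_j,\cdot)]|^2\,d\mu\,dt
 \le \liminf_{j\to\infty}\int_0^{c_0}\vn{\BQ[f_j]}_2^2\,dt = 0\,.
\]
Letting $K$ exhaust $\hat{M^2}$ yields $\int_0^{c_0}\vn{\BQ[\hat f]}_2^2\,dt=0$, and since $t\mapsto\vn{\BQ[\hat f](\cdot,t)}_2^2$ is continuous (the flow $\hat f$ being smooth on the closed manifold $\hat{M^2}$) this forces $\vn{\BQ[\hat f]}_2^2\equiv0$. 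As the blowup $\tilde f$ of the theorem is precisely the flow $\hat f$, this is the assertion.

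The main obstacle is not conceptual but lies in the scaling bookkeeping: one needs $n=2$ precisely so that $\mathcal{E}$ is scale-invariant and the identity $\mathcal{E}[f_j](t)=\mathcal{E}[f](t_j+r_j^4t)$ holds exactly, which is what turns the integrated monotonicity of $f_j$ into a genuine difference of values of $\mathcal{E}[f]$ at times converging to $T$; and one must confirm that the Lifespan Theorem really does place every $f_j$, together with the curvature smallness required by the monotonicity lemma, on the common interval $[0,c_0]$. Both of these are exactly what the construction preceding the theorem was arranged to supply.
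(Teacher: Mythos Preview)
Your proposal is correct and follows essentially the same approach as the paper: integrate the monotonicity inequality for $f_j$ over $[0,c_0]$, use scale-invariance of $\int_M|A^o|^2\,d\mu$ in dimension two to rewrite the endpoints as values of $\mathcal{E}[f]$ at times $t_j$ and $t_j+r_j^4c_0$, observe that this difference tends to zero since $\mathcal{E}[f]$ is monotone with a limit at $T$, and then pass to the blowup via the locally smooth convergence. The paper's proof is considerably more terse---it states only the key chain of inequalities and the convergence to zero---while you have spelled out the continuity argument for preservation of smallness, the exhaustion argument for the limit passage, and the final continuity step from $\int_0^{c_0}\vn{\BQ[\hat f]}_2^2\,dt=0$ to $\vn{\BQ[\hat f]}_2^2\equiv0$; these are exactly the details the paper leaves implicit.
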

\begin{proof}
The monotonicity calculation implies
\begin{align*}
	2\int_0^{c_0}\int_{\tilde{{M^2}}(j)}&|\BQ(f_j(\phi_j,t))|^2d\mu_{f_j(\phi_j,\cdot)}dt
= 2\int_0^{c_0}\int_{U_j}|\BQ_j|^2d\mu_jdt
\\
&\le
\int_{M^2} |A^o_j(0)|^2d\mu_j
- \int_{M^2} |A^o_j(c_0)|^2d\mu_j
\\
&= 
\int_{M^2} |A^o(t_j)|^2d\mu
- \int_{M^2} |A^o(t_j+r_j^4c_0)|^2d\mu,
\end{align*}
and this converges to zero as $j\rightarrow\infty$.
\end{proof}

\begin{proof}[Proof of Theorem \ref{TMglobal}.]
Theorem \ref{Tbu} implies
\[
	\int_{M^2}\big(|\Delta^\perp \vec H|^2 + |\nabla A^o|^2|\vec H|^2 + |\vec H|^4|A^o|^2\big)d\mu = 0
\]
and so the blowup is a union of embedded spheres and planes.
Ruling out disconnected components using \cite[Lemma 4.3]{kuwert2001wfs} and
noting that by \eqref{EQbusomecurv} we have $\vn{\tilde{A}}_2^2 > 0$, we
conclude that $\tilde{f}$ is a round sphere.

As the sequence of radii was arbitrary and area is monotone, this shows that
$\mu(f_t)\searrow0$ and that $f_t$ is asymptotic to a round point.
\end{proof}


\appendix

\section*{Appendix}

\begin{proof}[Proof of Lemma \ref{MS1lem}.]
Statement (i) is Lemma 4.2 in \cite{kuwert2002gfw}.

Let us now prove (ii).
We estimate
\begin{align*}
	\int_M |A|^6\,\gamma^s\,d\mu
	&\le c\bigg(
	      \int_M |\nabla A|\,|A|^\frac72\,\gamma^\frac{3s}{2}\,d\mu
	      + \int_M |A|^\frac{11}2\,\gamma^\frac{3s}{2}\,d\mu
	    + (\cg)\int_M |A|^\frac92\,\gamma^\frac{3s-2}{2}\,d\mu
	     \bigg)^\frac43
	     \\
	&\le 
	     c\bigg(
	     \int_M |\nabla A|^2\,|A|^2\,\gamma^s\,d\mu
	     \bigg)^\frac23
	     \bigg(
	     \int_M |A|^5\,\gamma^{2s}\,d\mu
	     \bigg)^\frac23
	     \\&\qquad
	     + c\vn{A}_{4,[\gamma>0]}^\frac43
	      \int_M |A|^6\,\gamma^s\,d\mu
	     + c(\cg)^\frac43\bigg(
	     \int_M |A|^\frac92\,\gamma^\frac{3s-2}{2}\,d\mu
	     \bigg)^\frac43
	     \\
	&\le
	     c\bigg(
	     \int_M |\nabla A|^2\,|A|^2\,\gamma^s\,d\mu
	     \bigg)^\frac23
	     \bigg(
	     \int_M |A|^5\,\gamma^{2s}\,d\mu
	     \bigg)^\frac23
	     \\&\qquad
	     + c\vn{A}_{4,[\gamma>0]}^\frac43
	      \int_M |A|^6\,\gamma^s\,d\mu
	     + c(\cg)^\frac43\bigg(
	     \int_{[\gamma>0]} |A|^4\,d\mu
	     \bigg)^\frac23
	     \bigg(
	     \int_M |A|^5\,\gamma^{3s-2}\,d\mu
	     \bigg)^\frac23
	     \\
	&\le
	     \theta\int_M |\nabla A|^2\,|A|^2\,\gamma^s\,d\mu
	      + c\bigg(
	     \int_M |A|^5\,\gamma^{2s}\,d\mu
	     \bigg)^2
	     \\&\qquad
	     + c\vn{A}_{4,[\gamma>0]}^\frac43
	      \int_M |A|^6\,\gamma^s\,d\mu
	      + c(\cg)^2\vn{A}_{4,[\gamma>0]}^4
	     \\
	&\le
	     \theta\int_M |\nabla A|^2\,|A|^2\,\gamma^s\,d\mu
	     + c(\vn{A}_{4,[\gamma>0]}^\frac43
	     + \vn{A}_{4,[\gamma>0]}^4)
	      \int_M |A|^6\,\gamma^s\,d\mu
	     \\&\qquad
	      + c(\cg)^2\vn{A}_{4,[\gamma>0]}^4
	     \,.
\end{align*}
Note that in the above we used $3s-2\ge 2s$.
Now for the other term we estimate
\begin{align*}
	\int_M |\nabla A|^2|A|^2\,\gamma^s\,d\mu
	&= - 2 \int_M (\nabla_iA_{jk})(A^{jk})(\nabla^iA_{lm})(A^{lm})\,\gamma^s\,d\mu
	  -  \int_M \Delta A*A*A*A\,\gamma^s\,d\mu
	  \\&\quad
	  -  \int_M \nabla A*A*A*A*D\gamma\,\gamma^{s-1}\,d\mu
	  \\
	&\le 
	  \theta\int_M |\nabla_{(2)}A|^2\,\gamma^s\,d\mu
	   + c\int_M |A|^6\,\gamma^s\,d\mu
	   + (\cg)\int_M |\nabla A|\,|A|^3\,\gamma^{s-1}\,d\mu
	  \\
	&\le 
	  \theta\int_M |\nabla_{(2)}A|^2\,\gamma^s\,d\mu
	   + c\int_M |A|^6\,\gamma^s\,d\mu
	   \frac12
	   \int_M |\nabla A|^2|A|^2\,\gamma^s\,d\mu
	   + (\cg)^2\vn{A}_{4,[\gamma>0]}^4
\end{align*}
where we used $s\ge2$.
Absorbing yields
\begin{align*}
	\int_M |\nabla A|^2|A|^2\,\gamma^s\,d\mu
	&\le 
	  \theta\int_M |\nabla_{(2)}A|^2\,\gamma^s\,d\mu
	   + c\int_M |A|^6\,\gamma^s\,d\mu
	   + (\cg)^2\vn{A}_{4,[\gamma>0]}^4
	   \,.
\end{align*}
We add these estimates together to obtain
\begin{align*}
	\int_M \big(|\nabla A|^2|A|^2 + |A|^6\big)\,\gamma^s\,d\mu
	&\le
	\theta\int_M |\nabla_{(2)}A|^2\,\gamma^s\,d\mu
	   + c\theta\int_M |\nabla A|^2\,|A|^2\,\gamma^s\,d\mu
	   \\&\quad
	     + c(\vn{A}_{4,[\gamma>0]}^\frac43
	     + \vn{A}_{4,[\gamma>0]}^4)
	      \int_M |A|^6\,\gamma^s\,d\mu
	   + (\cg)^2\vn{A}_{4,[\gamma>0]}^4\,.
\end{align*}
Absorbing again for $\theta$ sufficiently small yields
\begin{align*}
	\int_M \big(|\nabla A|^2|A|^2 + |A|^6\big)\,\gamma^s\,d\mu
	\le
	&\ \theta\int_M |\nabla_{(2)}A|^2\,\gamma^s\,d\mu
	     + c(\vn{A}_{4,[\gamma>0]}^\frac43
	     + \vn{A}_{4,[\gamma>0]}^4)
	      \int_M |A|^6\,\gamma^s\,d\mu
	     \\&
	   + (\cg)^2\vn{A}_{4,[\gamma>0]}^4\,.
\end{align*}
as required.

For the second estimate in (ii) we begin with
\begin{align*}
	\int_M |\nabla A|^2|A|^3\,\gamma^s
	&\le c\int_M |\nabla_{(2)}A|\,|A|^4\,\gamma^s\,d\mu
	   + c(\cg)\int_M |\nabla A|\,|A|^4\gamma^{s-1}\,d\mu
	   - \frac34\int_M |\nabla |A|^2|^2\,|A|\,\gamma^s\,d\mu
	\\
	&\le \frac12\int_M |\nabla A|^2|A|^3\,\gamma^s
	   + \theta\int_M |\nabla_{(2)}A|^2\,|A|\,\gamma^s\,d\mu
	   + c\int_M |A|^7\,\gamma^s\,d\mu
	   + c(\cg)^2\int_M |A|^5\gamma^{s-2}\,d\mu
	\\
	&\le \frac12\int_M |\nabla A|^2|A|^3\,\gamma^s
	   + \theta\int_M |\nabla_{(2)}A|^2\,|A|\,\gamma^s\,d\mu
	   + c\int_M |A|^7\,\gamma^s\,d\mu
	   + c(\cg)^4\vn{A}_{3,[\gamma>0]}^3
\end{align*}
which upon absorbing yields
\begin{equation}
	\int_M |\nabla A|^2|A|^3\,\gamma^s
	 \le
	     \theta\int_M |\nabla_{(2)}A|^2\,|A|\,\gamma^s\,d\mu
	   + c\int_M |A|^7\,\gamma^s\,d\mu
	   + c(\cg)^4\vn{A}_{3,[\gamma>0]}^3
	   \,.
\label{eqprev0}
\end{equation}
Note that in the estimate above we used $2s-4\ge s$.
Now Michael-Simon yields
\begin{align}
	\int_M |A|^7\,\gamma^s\,d\mu
	\notag&\le
	   c\bigg(
	   \int_M |\nabla A|\,|A|^\frac{11}{3}\,\gamma^\frac{2s}{3}\,d\mu
	    \bigg)^\frac32
	 + c\bigg(
	   \int_M |A|^\frac{17}{3}\,\gamma^\frac{2s}{3}\,d\mu
	    \bigg)^\frac32
	    + c(\cg)^\frac32\bigg(
	   \int_M |A|^\frac{14}{3}\,\gamma^\frac{2s-3}{3}\,d\mu
	    \bigg)^\frac32
	\notag    \\
	\notag&\le
	    c\vn{A}_{3,[\gamma>0]}^\frac32
	   \int_M |\nabla A|^\frac32\,|A|^4,\gamma^s\,d\mu
	  + c\vn{A}_{3,[\gamma>0]}^\frac32
	   \int_M |A|^7\,\gamma^s\,d\mu
	\notag   \\&\qquad
	  + c(\cg)^\frac32
	   \int_M |A|^\frac{7}{2}\,\gamma^\frac{s-3}{2}\,d\mu
            \bigg(
	   \int_M |A|^7\,\gamma^s\,d\mu
	    \bigg)^\frac12
	   \notag \\
	\notag&\le
	    c\vn{A}_{3,[\gamma>0]}^2
	   \int_M |\nabla A|^2\,|A|^3,\gamma^s\,d\mu
	   + \big(c\vn{A}_{3,[\gamma>0]}^\frac32 + \theta)
	   \int_M |A|^7\,\gamma^s\,d\mu
	\notag   \\&\qquad
	  + c(\cg)^3
            \bigg(
	   \int_M |A|^\frac{7}{2}\,\gamma^\frac{s-3}{2}\,d\mu
	    \bigg)^2
	    \,.
\label{eqprev}
\end{align}
To deal with the last term we estimate
\begin{align*}
	  c(\cg)^3
            \bigg(
	   \int_M |A|^\frac{7}{2}\,\gamma^\frac{s-3}{2}\,d\mu
	    \bigg)^2
	  &\le 
	    c(\cg)^3\vn{A}_{3,[\gamma>0]}^3
	   \int_M |A|^4\,\gamma^{s-3}\,d\mu
	  \\
	  &\le 
	    c(\cg)^3\vn{A}_{3,[\gamma>0]}^\frac92
            \bigg(
	   \int_M |A|^5\,\gamma^{2s-6}\,d\mu
	    \bigg)^\frac12
	  \\
	  &\le 
	  c(\cg)^3\vn{A}_{3,[\gamma>0]}^\frac{21}{4}
            \bigg(
	   \int_M |A|^7\,\gamma^{4s-12}\,d\mu
	    \bigg)^\frac14
	  \\
	  &\le 
	   \theta\int_M |A|^7\,\gamma^s\,d\mu
	  + c(\cg)^4\vn{A}_{3,[\gamma>0]}^7
\end{align*}
where we used $4s-12\ge s$.
This allows us to improve estimate \eqref{eqprev} to
\begin{align*}
	\int_M |A|^7\,\gamma^s\,d\mu
	\notag&\le
	    c\vn{A}_{3,[\gamma>0]}^\frac32
	   \int_M |\nabla A|^\frac32\,|A|^4,\gamma^s\,d\mu
	   + \big(c\vn{A}_{3,[\gamma>0]}^\frac32 + \theta)
	     \int_M |A|^7\,\gamma^s\,d\mu
	   \\&\qquad
	  + c(\cg)^4\vn{A}_{3,[\gamma>0]}^7
	    \,.
\end{align*}
Combining the above with estimate \eqref{eqprev0}, we have
\begin{align*}
	\int_M \big(|\nabla A|^2|A|^3 + |A|^7\big)\,\gamma^s\,d\mu
	&\le
	 \big(c\vn{A}_{3,[\gamma>0]}^2 + \theta\big)\int_M \big(|\nabla_{(2)}A|^2|A| + |\nabla A|^2|A|^3 + |A|^7\big)\,\gamma^s\,d\mu
	     \\&\qquad
	   + (\cg)^4\vn{A}_{3,[\gamma>0]}^3\,,
\end{align*}
as required.

For (iii) we begin by noting
\begin{align*}
-2\int_M &\IP{A_{ij}}{\nabla^pA^{ij}}\IP{\nabla^q A_{kl}}{\nabla_{pq}A^{kl}}\,\gamma^s\,d\mu
\\
&= 
 \int_M \IP{A_{ij}}{\Delta^\perp A^{ij}}|\nabla A|^2\,\gamma^s\,d\mu
         + \int_M |\nabla A|^4\,\gamma^s\,d\mu
\\
  &\qquad+ s\int_M 
          \IP{A_{ij}}{\nabla^pA^{ij}}\IP{A_{kl}}{\nabla_{pq}A^{kl}}\nabla^q\gamma
                            \,\gamma^{s-1}\,d\mu
\end{align*}
and
\begin{align*}
-2\int_M &\IP{A_{ij}}{\nabla^pA^{ij}}\IP{\nabla^q A_{kl}}{\nabla_{pq}A^{kl}}\,\gamma^s\,d\mu
\\
&= \int_M |A|^2|\nabla_{(2)}A|^2\,\gamma^s\,d\mu
 + \int_M |A|^2\IP{\nabla^qA_{kl}}{\Delta^\perp\nabla_qA^{kl}}\,\gamma^s\,d\mu
\\
  &\qquad+ s\int_M 
          |A|^2\IP{\nabla^qA_{kl}}{\nabla_{pq}A^{kl}}\nabla^p\gamma
                            \,\gamma^{s-1}\,d\mu
\end{align*}
so that
\begin{align}
\int_M |\nabla A|^4\,\gamma^s\,d\mu
&=
-2\int_M \IP{A_{ij}}{\nabla^pA^{ij}}\IP{\nabla^q A_{kl}}{\nabla_{pq}A^{kl}}\,\gamma^s\,d\mu
\label{EQibpuse}\\&\qquad
 - \int_M \IP{A_{ij}}{\Delta^\perp A^{ij}}|\nabla A|^2\,\gamma^s\,d\mu
\notag\\
  &\qquad- s\int_M 
          \IP{A_{ij}}{\nabla^pA^{ij}}\IP{A_{kl}}{\nabla_{pq}A^{kl}}\nabla^q\gamma
                            \,\gamma^{s-1}\,d\mu
\notag\\&=
   \int_M |A|^2|\nabla_{(2)}A|^2\,\gamma^s\,d\mu
 + \int_M |A|^2\IP{\nabla^qA_{kl}}{\Delta^\perp\nabla_qA^{kl}}\,\gamma^s\,d\mu
\notag\\&\qquad
 - \int_M \IP{A_{ij}}{\Delta^\perp A^{ij}}|\nabla A|^2\,\gamma^s\,d\mu
\notag\\
  &\qquad+ s\int_M 
          |A|^2\IP{\nabla^qA_{kl}}{\nabla_{pq}A^{kl}}\nabla^p\gamma
                            \,\gamma^{s-1}\,d\mu
\notag\\
  &\qquad- s\int_M 
          \IP{A_{ij}}{\nabla^pA^{ij}}\IP{A_{kl}}{\nabla_{pq}A^{kl}}\nabla^q\gamma
                            \,\gamma^{s-1}\,d\mu
\,.
\end{align}
We estimate
\begin{align*}
 \int_M |A|^2\IP{\nabla^qA_{kl}}{\Delta^\perp\nabla_qA^{kl}}\,\gamma^s\,d\mu
&\le  \theta \int_M |\nabla_{(3)}A|^2\,\gamma^s\,d\mu
   + \frac14 \int_M |\nabla A|^4\,\gamma^s\,d\mu
\\&\qquad
   + c_\theta \int_M |A|^8\,\gamma^s\,d\mu
\,;
\end{align*}
and, recalling $s\ge4$,
\begin{align*}
  s\int_M 
          &|A|^2\IP{\nabla^qA_{kl}}{\nabla_{pq}A^{kl}}\nabla^p\gamma
                            \,\gamma^{s-1}\,d\mu
\\
  &- s\int_M 
          \IP{A_{ij}}{\nabla^pA^{ij}}\IP{A_{kl}}{\nabla_{pq}A^{kl}}\nabla^q\gamma
                            \,\gamma^{s-1}\,d\mu
\\&\le 
   \int_M |A|^2|\nabla_{(2)}A|^2\,\gamma^s\,d\mu
   + \frac14 \int_M |\nabla A|^4\,\gamma^s\,d\mu
   + cs^2(c_\gamma)^4\int_M |A|^4\,\gamma^{s-4}\,d\mu
\,.
\end{align*}
Combining with the above we find
\begin{align}
\int_M |\nabla A|^4\,\gamma^s\,d\mu
&\le
    \theta \int_M |\nabla_{(3)}A|^2\,\gamma^s\,d\mu
   + c\int_M |A|^2|\nabla_{(2)}A|^2\,\gamma^s\,d\mu
\notag\\
  &\qquad
   + cs^2(c_\gamma)^4\vn{A}_{4,[\gamma>0]}^4
\,.
\label{EQnabla4}
\end{align}
Now let us estimate the second term on the right.
By the Michael-Simon Sobolev inequality we find
\begin{align}
\int_M |A|^2|\nabla_{(2)}A|^2\,\gamma^s\,d\mu
&\le c\bigg(
 	\int_M |A|^\frac12\,|\nabla A|\,|\nabla_{(2)}A|^\frac32\,\gamma^{\frac{3s}{4}}\,d\mu
      \bigg)^{\frac43}
\notag      \\&\qquad
   + c\bigg(
 	\int_M |A|^\frac32\,|\nabla_{(2)}A|^\frac12\,|\nabla_{(3)}A|\,\gamma^{\frac{3s}{4}}\,d\mu
      \bigg)^{\frac43}
\notag      \\&\qquad
   + c\bigg(
   \frac{3s}{4}(c_\gamma)\int_M |A|^\frac32\,|\nabla_{(2)}A|^\frac32\,\gamma^{\frac{3s}{4}-1}\,d\mu
      \bigg)^{\frac43}
\notag      \\&\qquad
   + c\bigg(
                         \int_M |A|^\frac52\,|\nabla_{(2)}A|^\frac32\,\gamma^{\frac{3s}{4}}\,d\mu
      \bigg)^{\frac43}
\notag\\
&\le c\bigg(
 	\int_M |A|^\frac12\,|\nabla A|\,|\nabla_{(2)}A|^\frac32\,\gamma^{\frac{3s}{4}}\,d\mu
      \bigg)^{\frac43}
     \notag \\&\qquad
   + c\bigg(
 	\int_M |A|^\frac32\,|\nabla_{(2)}A|^\frac12\,|\nabla_{(3)}A|\,\gamma^{\frac{3s}{4}}\,d\mu
      \bigg)^{\frac43}
     \notag \\&\qquad
   + c\bigg(
   s(c_\gamma)\int_M |A|^\frac32\,|\nabla_{(2)}A|^\frac32\,\gamma^{\frac{3s}{4}-1}\,d\mu
      \bigg)^{\frac43}
     \notag \\&\qquad
   + c\bigg(
                         \int_{[\gamma>0]} |A|^4\,d\mu
      \bigg)^{\frac13}
                         \int_M |A|^2\,|\nabla_{(2)}A|^2\,\gamma^{s}\,d\mu
      \,.
\label{EQpreest}
\end{align}
We work on the first term by estimating:
\begin{align}
     c\bigg(
     \int_M &|A|^\frac12\,|\nabla A|\,|\nabla_{(2)}A|^\frac32\,\gamma^{\frac{3s}{4}}\,d\mu
     \bigg)^{\frac43}
\notag     \\
     &\le
     \bigg(\int_{[\gamma>0]} |A|^4\,d\mu\bigg)^\frac16
     \bigg(\int_M |\nabla A|^{\frac87}|\nabla_{(2)}A|^{\frac{12}7}\,\gamma^{\frac{6s}{7}}\,d\mu\bigg)^\frac76
\notag     \\
     &\le 
     \bigg(\int_{[\gamma>0]} |A|^4\,d\mu\bigg)^\frac16
     \bigg(\int_M |\nabla A|^4\,\gamma^s\,d\mu\bigg)^\frac13
     \bigg(\int_M |\nabla_{(2)}A|^{\frac{12}5}\,\gamma^{\frac{4s}{5}}\,d\mu\bigg)^\frac56
\notag     \\
     &\le 
     \frac13\vn{A}^\frac23_{4,[\gamma>0]}
     \int_M |\nabla A|^4\,\gamma^s\,d\mu
     + \frac23\vn{A}^\frac23_{4,[\gamma>0]}
       \bigg(\int_M |\nabla_{(2)}A|^{\frac{12}5}\,\gamma^{\frac{4s}{5}}\,d\mu\bigg)^\frac54
\,.
\label{EQfirst}
\end{align}
Estimating
\begin{align*}
       \int_M |\nabla_{(2)}A|^{\frac{12}5}\,\gamma^{\frac{4s}{5}}\,d\mu
&\le
       c\int_M |\nabla A|\,|\nabla_{(2)}A|^{\frac{2}5}\,|\nabla_{(3)}A|\,\gamma^{\frac{4s}{5}}\,d\mu
\\&\quad
     + cs(c_\gamma)\int_M |\nabla A|\,|\nabla_{(2)}A|^{\frac{7}5}\,\gamma^{\frac{4s}{5}-1}\,d\mu
\\
&\le
       c\bigg(\int_M |\nabla_{(2)}A|^{\frac{12}5}\,\gamma^{\frac{4s}{5}}\,d\mu\bigg)^\frac16
       \bigg[
        \bigg(
        \int_M |\nabla A|^\frac65\,|\nabla_{(3)}A|^{\frac{6}5}\,\gamma^{\frac{4s}{5}}\,d\mu
        \bigg)^\frac56
\\&\qquad
       + s(c_\gamma)
        \bigg(
        \int_M |\nabla A|^\frac65\,|\nabla_{(2)}A|^{\frac{6}5}\,\gamma^{\frac{4s-6}{5}}\,d\mu
        \bigg)^\frac56
       \bigg]
\end{align*}
and absorbing yields
\begin{align*}
       \int_M |\nabla_{(2)}A|^{\frac{12}5}\,\gamma^{\frac{4s}{5}}\,d\mu
&\le
        c\int_M |\nabla A|^\frac65\,|\nabla_{(3)}A|^{\frac{6}5}\,\gamma^{\frac{4s}{5}}\,d\mu
\\&\qquad
       + c(s(c_\gamma))^\frac65
        \int_M |\nabla A|^\frac65\,|\nabla_{(2)}A|^{\frac{6}5}\,\gamma^{\frac{4s-6}{5}}\,d\mu
\\
&\le
        c\int_M |\nabla A|^\frac65\,|\nabla_{(3)}A|^{\frac{6}5}\,\gamma^{\frac{4s}{5}}\,d\mu
\\&\qquad
       \frac12\int_M |\nabla_{(2)}A|^{\frac{12}5}\,\gamma^{\frac{4s}{5}}\,d\mu
       + c(s(c_\gamma))^\frac{12}{5}
        \int_M |\nabla A|^\frac{12}5\,\gamma^{\frac{4s-12}{5}}\,d\mu
\,.
\end{align*}
Absorbing another time gives
\begin{align}
       \int_M |\nabla_{(2)}A|^{\frac{12}5}\,\gamma^{\frac{4s}{5}}\,d\mu
&\le
        c\int_M |\nabla A|^\frac65\,|\nabla_{(3)}A|^{\frac{6}5}\,\gamma^{\frac{4s}{5}}\,d\mu
\notag
\\&\qquad
       + c(s(c_\gamma))^\frac{12}{5}
        \int_M |\nabla A|^\frac{12}5\,\gamma^{\frac{4s-12}{5}}\,d\mu
\label{EQplace2}
\,.
\end{align}
To close this estimate we must use the same technique again on the last term with $c_\gamma$.
The first step is
\begin{align}
        \int_M &|\nabla A|^\frac{12}5\,\gamma^{\frac{4s-12}{5}}\,d\mu
\notag
\\
&\le
        c\int_M |A|\,|\nabla A|^\frac{2}5\,|\nabla_{(2)}A|\,\gamma^{\frac{4s-12}{5}}\,d\mu
      + cs(c_\gamma)\int_M |A|\,|\nabla A|^\frac{7}5\,\gamma^{\frac{4s-17}{5}}\,d\mu
\notag
\\
&\le
        \bigg(
          \int_M |\nabla A|^\frac{12}5\,\gamma^{\frac{4s-12}{5}}\,d\mu
        \bigg)^\frac16
      \bigg[
        \bigg(
          \int_M |A|^\frac65\,|\nabla_{(2)}A|^\frac65\,\gamma^{\frac{4s-12}{5}}\,d\mu
        \bigg)^\frac65
\notag
\\&\qquad
      + cs(c_\gamma)
        \bigg(
          \int_M |A|^\frac65\,|\nabla A|^\frac65\,\gamma^{\frac{4s-18}{5}}\,d\mu
        \bigg)^\frac65
      \bigg]
\,.
\notag
\intertext{Absorbing gives}
        \int_M &|\nabla A|^\frac{12}5\,\gamma^{\frac{4s-12}{5}}\,d\mu
\notag
        \\
&\le 
          \int_M |A|^\frac65\,|\nabla_{(2)}A|^\frac65\,\gamma^{\frac{4s-12}{5}}\,d\mu
      + c(s(c_\gamma))^\frac65
          \int_M |A|^\frac65\,|\nabla A|^\frac65\,\gamma^{\frac{4s-18}{5}}\,d\mu
\,.
\label{EQplace}
\end{align}
Now this whole term that we are estimating is raised to the power $\frac54$ and
has a coefficient involving $c_\gamma$, which scales.
Incorporating this, we find
\[
\bigg(
 (c_\gamma)^\frac{18}{5}\int_M |A|^\frac65\,|\nabla A|^\frac65\,\gamma^{\frac{4s-18}{5}}\,d\mu
\bigg)^\frac54
\le 
 (c_\gamma)^\frac{9}{2}
 \bigg(
 \int_{[\gamma>0]} |A|^\frac{12}{7}\,d\mu
 \bigg)^\frac{7}{8}
 \bigg(
 \int_M |\nabla A|^4\,\gamma^{\frac{8s-36}{3}}\,d\mu
 \bigg)^\frac{3}{8}
\,.
\]
Since $s\ge\frac{36}{5}$ we have $\frac{8s-36}{3}\ge s$, this term is estimated by
\begin{align}
\bigg(
  (c_\gamma)^\frac{18}{5}\int_M &|A|^\frac65\,|\nabla A|^\frac65\,\gamma^{\frac{4s-18}{5}}\,d\mu
\bigg)^\frac54
\notag
\\&\le
 (c_\gamma)^\frac{9}{2}
 \mu_\gamma(f)^\frac{1}{2}
 \bigg(
 \int_{[\gamma>0]} |A|^4\,d\mu
 \bigg)^\frac{3}{8}
 \bigg(
 \int_M |\nabla A|^4\,\gamma^{s}\,d\mu
 \bigg)^\frac{3}{8}
\notag
\\&\le
  \int_M |\nabla A|^4\,\gamma^s\,d\mu
 + 
 c(c_\gamma)^\frac{36}{5}\mu_\gamma(f)^\frac45\bigg(
 \int_{[\gamma>0]} |A|^4\,d\mu
 \bigg)^\frac{3}{5}
\,.
\label{EQ1}
\end{align}
In the above we used $\mu_\gamma := \mu|_{[\gamma>0]}$.
Now let us move on to the first term in \eqref{EQplace}.
We estimate it by
\begin{align*}
c(s(c_\gamma))^\frac{12}{5}\int_M &|A|^\frac65\,|\nabla_{(2)}A|^\frac65\,\gamma^{\frac{4s-12}{5}}\,d\mu
\\
&\le 
       \frac12\int_M |\nabla_{(2)}A|^{\frac{12}5}\,\gamma^{\frac{4s}{5}}\,d\mu
 + c(s(c_\gamma))^\frac{24}{5}\int_M |A|^\frac{12}{5}\,\gamma^{\frac{4s-24}{5}}\,d\mu
\\
&\le 
       \frac12\int_M |\nabla_{(2)}A|^{\frac{12}5}\,\gamma^{\frac{4s}{5}}\,d\mu
 + c(s(c_\gamma))^\frac{24}{5}\mu_\gamma(f)^\frac25\vn{A}_{4,[\gamma>0]}^\frac{12}{5}
\,.
\end{align*}
Combining the above with \eqref{EQ1}, \eqref{EQplace} and absorbing in \eqref{EQplace2} gives
\begin{align}
       \int_M |\nabla_{(2)}A|^{\frac{12}5}\,\gamma^{\frac{4s}{5}}\,d\mu
&\le
        c\int_M |\nabla A|^\frac65\,|\nabla_{(3)}A|^{\frac{6}5}\,\gamma^{\frac{4s}{5}}\,d\mu
 + \bigg(\int_M |\nabla A|^4\,\gamma^s\,d\mu\bigg)^\frac45
\notag
\\&\hskip-5mm
 + c(c_\gamma)^\frac{16}{5}\Big([(c_\gamma)^4\mu_\gamma(f)]^\frac{16}{25}
   \vn{A}_{4,[\gamma>0]}^\frac{48}{25}
 + [(c_\gamma)^4\mu_\gamma(f)]^\frac{2}{5}
   \vn{A}_{4,[\gamma>0]}^\frac{12}{5}\Big)
\,.
\label{EQterm}
\end{align}
Note that $[(c_\gamma)^4\mu_\gamma(f)]$ is scale invariant, and that in
\eqref{EQterm} the constant $c$ depends on $s$, $n$ and $N$.
Incorporating the eventual $\frac54$ power, we estimate the first term on the
right in \eqref{EQterm} by
\begin{align}
\notag
c\bigg(\int_M |\nabla A|^\frac65\,|\nabla_{(3)}A|^{\frac{6}5}\,\gamma^{\frac{4s}{5}}\,d\mu\bigg)^\frac54
&\le
c\bigg(\int_M |\nabla A|^3\,\gamma^{\frac{s}{2}}\,d\mu\bigg)^\frac12
 \bigg(\int_M |\nabla_{(3)}A|^2\,\gamma^{s}\,d\mu\bigg)^\frac34
\\&\hskip-2cm\le
c\bigg(\int_M |\nabla A|^3\,\gamma^{\frac{s}{2}}\,d\mu\bigg)^2
 + \frac12\int_M |\nabla_{(3)}A|^2\,\gamma^{s}\,d\mu
\,.
\label{EQplace3}
\end{align}
Using a similar strategy as before, we estimate
\begin{align*}
c\bigg(\int_M |\nabla A|^3\,\gamma^{\frac{s}{2}}\,d\mu\bigg)^2
&\le c\bigg(\int_M |A|\,|\nabla A|\,|\nabla_{(2)}A|\,\gamma^\frac{s}{2}\,d\mu\bigg)^2
\\&\quad
   + cs(c_\gamma)^2\bigg(\int_M |A|\,|\nabla A|^2\,\gamma^\frac{s-2}{2}\,d\mu\bigg)^2
\\
&\le c\bigg(\int_M |A|^\frac{12}{7}\,|\nabla A|^\frac{12}{7}\,\gamma^\frac{2s}{7}\,d\mu\bigg)^\frac{7}{6}
\bigg(\int_M |\nabla_{(2)}A|^\frac{12}{5}\,\gamma^\frac{4s}{5}\,d\mu\bigg)^\frac56
\\&\quad
   + cs(c_\gamma)^2\bigg(\int_M |\nabla A|^3\,\gamma^\frac{s}{2}\,d\mu\bigg)^\frac43
                   \bigg(\int_M |A|^3\,\gamma^\frac{s-6}{2}\,d\mu\bigg)^\frac23
\\
&\le
\frac12\bigg(\int_M |\nabla_{(2)} A|^{\frac{12}{5}} \gamma^{\frac{4s}{5}}\,d\mu\bigg)^\frac54
  + \frac12\bigg(\int_M |\nabla A|^3\,\gamma^\frac{s}{2}\,d\mu\bigg)^2
\\&\quad
  + c\bigg(\int_M |A|^\frac{12}7\,|\nabla A|^\frac{12}7\,\gamma^\frac{2s}{7}\,d\mu\bigg)^\frac72
   + cs^3(c_\gamma)^6
                   \bigg(\int_M |A|^3\,\gamma^\frac{s-6}{2}\,d\mu\bigg)^2
\\
&\le
\frac12\bigg(\int_M |\nabla_{(2)} A|^{\frac{12}{5}} \gamma^{\frac{4s}{5}}\,d\mu\bigg)^\frac54
  + \frac12\bigg(\int_M |\nabla A|^3\,\gamma^\frac{s}{2}\,d\mu\bigg)^2
\\&\quad
  + c\bigg(\int_{[\gamma>0]} |A|^4\,d\mu\bigg)^\frac32
     \bigg(\int_M |\nabla A|^3\,\gamma^\frac{s}{2}\,d\mu\bigg)^2
\\&\quad
   + cs^3(c_\gamma)^6
                   \bigg(\int_M |A|^3\,\gamma^\frac{s-6}{2}\,d\mu\bigg)^2
\end{align*}
and absorb to find
\begin{align*}
\bigg(\int_M |\nabla A|^3\,\gamma^{\frac{s}{2}}\,d\mu\bigg)^2
&\le
\frac12\bigg(\int_M |\nabla_{(2)} A|^{\frac{12}{5}} \gamma^{\frac{4s}{5}}\,d\mu\bigg)^\frac54
  + c
   \vn{A}_{4,[\gamma>0]}^6
     \bigg(\int_M |\nabla A|^3\,\gamma^\frac{s}{2}\,d\mu\bigg)^2
\\&\quad
  + cs^3(c_\gamma)^6\mu_\gamma(f)^\frac12
   \vn{A}_{4,[\gamma>0]}^6
   \,.
\end{align*}
We combine this estimate with \eqref{EQplace3}, \eqref{EQterm} and absorb to obtain
\begin{align}
   \bigg(&\int_M |\nabla_{(2)}A|^{\frac{12}5}\,\gamma^{\frac{4s}{5}}\,d\mu\bigg)^\frac54
 + \bigg(\int_M |\nabla A|^3\,\gamma^{\frac{s}{2}}\,d\mu\bigg)^2
\notag\\
&\le
   \frac12\int_M |\nabla_{(3)}A|^2\,\gamma^{s}\,d\mu
 + c\int_M |\nabla A|^4\,\gamma^s\,d\mu
 + c\vn{A}_{4,[\gamma>0]}^6
    \bigg(\int_M |\nabla A|^3\,\gamma^\frac{s}{2}\,d\mu\bigg)^2
\notag\\&\quad
 + c(c_\gamma)^4[(c_\gamma)^4\mu_\gamma(f)]^\frac{1}{2}\Big(
   \vn{A}_{4,[\gamma>0]}^6
 + \vn{A}_{4,[\gamma>0]}^3
 + [(c_\gamma)^4\mu_\gamma(f)]^\frac{3}{10}
   \vn{A}_{4,[\gamma>0]}^\frac{12}{5}\Big)
   \,.
\label{EQinter1}
\end{align}
The estimate \eqref{EQinter1} is now combined with \eqref{EQfirst}:
In the above $c$ depends only on $n$, $N$, and $s$.

We combine \eqref{EQplace3} above with the estimate and use Young's inequality
to obtain
\begin{align*}
     \bigg(
     \int_M &|A|^\frac12\,|\nabla A|\,|\nabla_{(2)}A|^\frac32\,\gamma^{\frac{3s}{4}}\,d\mu
     \bigg)^{\frac43}
 + \bigg(\int_M |\nabla_{(2)}A|^{\frac{12}5}\,\gamma^{\frac{4s}{5}}\,d\mu\bigg)^\frac54
 + \bigg(\int_M |\nabla A|^3\,\gamma^{\frac{s}{2}}\,d\mu\bigg)^2
\\
     &\le 
     c\vn{A}^\frac23_{4,[\gamma>0]}
     \int_M \big(|\nabla_{(3)}A|^2 + |\nabla A|^4\big)\,\gamma^s\,d\mu
 + c\vn{A}_{4,[\gamma>0]}^\frac{20}{3}
    \bigg(\int_M |\nabla A|^3\,\gamma^\frac{s}{2}\,d\mu\bigg)^2
\notag\\&\quad
 + c(c_\gamma)^4[(c_\gamma)^4\mu_\gamma(f)]^\frac{1}{2}\Big(
   \vn{A}_{4,[\gamma>0]}^\frac{20}{3}
 + \vn{A}_{4,[\gamma>0]}^\frac{11}{3}
 + [(c_\gamma)^4\mu_\gamma(f)]^\frac{3}{10}
   \vn{A}_{4,[\gamma>0]}^\frac{46}{15}\Big)
   \,.
\end{align*}
This estimates the first term in \eqref{EQpreest}.
Now let us work on the second:
\begin{align*}
\bigg(
 	\int_M &|A|^\frac32\,|\nabla_{(2)}A|^\frac12\,|\nabla_{(3)}A|\,\gamma^{\frac{3s}{4}}\,d\mu
\bigg)^{\frac43}
\le
 c\bigg(
 	\int_M |A|^3\,|\nabla_{(2)}A|\,\gamma^{\frac{s}{2}}\,d\mu
  \bigg)^\frac23
  \bigg(
 	\int_M |\nabla_{(3)}A|^2\,\gamma^{s}\,d\mu
  \bigg)^\frac23
\\&\le
 c\bigg(
 	\int_{[\gamma>0]} |A|^4\,d\mu
  \bigg)^\frac13
  \bigg(
 	\int_M |A|^2\,|\nabla_{(2)}A|^2\,\gamma^{s}\,d\mu
  \bigg)^\frac13
  \bigg(
 	\int_M |\nabla_{(3)}A|^2\,\gamma^{s}\,d\mu
  \bigg)^\frac23
\\&\le
c\vn{A}_{4,[\gamma>0]}^\frac43\int_M \big(|\nabla_{(3)}A|^2 +
|A|^2\,|\nabla_{(2)}A|^2\big)\,\gamma^{s}\,d\mu
\,.
\end{align*}
Combining the above two estimates we find
\begin{align}
     \bigg(
     \int_M &|A|^\frac12\,|\nabla A|\,|\nabla_{(2)}A|^\frac32\,\gamma^{\frac{3s}{4}}\,d\mu
     \bigg)^{\frac43}
+ \bigg(
  \int_M |A|^\frac32\,|\nabla_{(2)}A|^\frac12\,|\nabla_{(3)}A|\,\gamma^{\frac{3s}{4}}\,d\mu
  \bigg)^{\frac43}
\notag\\&\hskip-5mm
 + \bigg(\int_M |\nabla_{(2)}A|^{\frac{12}5}\,\gamma^{\frac{4s}{5}}\,d\mu\bigg)^\frac54
 + \bigg(\int_M |\nabla A|^3\,\gamma^{\frac{s}{2}}\,d\mu\bigg)^2
\notag\\
     &\le 
     c\big(\vn{A}^\frac23_{4,[\gamma>0]} + 
       \vn{A}_{4,[\gamma>0]}^\frac43\big)
     \int_M \big(|\nabla_{(3)}A|^2 + |A|^2\,|\nabla_{(2)}A|^2 + |\nabla A|^4\big)\,\gamma^s\,d\mu
\notag\\&\quad
 + c\vn{A}_{4,[\gamma>0]}^\frac{20}{3}
    \bigg(\int_M |\nabla A|^3\,\gamma^\frac{s}{2}\,d\mu\bigg)^2
\notag\\&\quad
 + c(c_\gamma)^4[(c_\gamma)^4\mu_\gamma(f)]^\frac{1}{2}\Big(
   \vn{A}_{4,[\gamma>0]}^\frac{20}{3}
 + \vn{A}_{4,[\gamma>0]}^\frac{11}{3}
 + [(c_\gamma)^4\mu_\gamma(f)]^\frac{3}{10}
   \vn{A}_{4,[\gamma>0]}^\frac{46}{15}\Big)
   \,.
\label{EQplace4}
\end{align}
This estimates the second term in \eqref{EQpreest}.
For the third term, we estimate:
\begin{align}
   \bigg(
   s(c_\gamma)\int_M &|A|^\frac32\,|\nabla_{(2)}A|^\frac32\,\gamma^{\frac{3s}{4}-1}\,d\mu
   \bigg)^{\frac43}
\notag\\&\le c(c_\gamma)^\frac43
         \bigg(
           \int_M |A|^6\,\gamma^\frac{s}2\,d\mu
         \bigg)^\frac13
           \int_M |\nabla_{(2)}A|^2\,\gamma^\frac{5s-8}6\,d\mu
\notag\\&\le c(c_\gamma)^\frac43
         \bigg(
           \int_{[\gamma>0]} |A|^4\,d\mu
         \bigg)^\frac16
         \bigg(
           \int_M |A|^8\,\gamma^s\,d\mu
         \bigg)^\frac16
           \int_M |\nabla_{(2)}A|^2\,\gamma^\frac{5s-8}6\,d\mu
\notag\\&\le c\vn{A}_{4,[\gamma>0]}^\frac23\int_M |A|^8\,\gamma^s\,d\mu
        + c(c_\gamma)^\frac85\vn{A}_{4,[\gamma>0]}^\frac23
         \bigg(
           \int_M |\nabla_{(2)}A|^2\,\gamma^\frac{5s-8}6\,d\mu
         \bigg)^\frac65\,.
\label{EQplace8}
\end{align}
In order to estimate the last term, we first calculate
\begin{align*}
(c_\gamma)^2
           \int_M |\nabla A|^2\,\gamma^\frac{5s-20}6\,d\mu
&\le 
   c(c_\gamma)^2
           \int_M |A|\,|\nabla_{(2)} A|\,\gamma^\frac{5s-20}6\,d\mu
 + c(c_\gamma)^3
           \int_M |A|\,|\nabla A|\,\gamma^\frac{5s-26}6\,d\mu
\\
&\le 
\frac12(c_\gamma)^2
           \int_M |\nabla A|^2\,\gamma^\frac{5s-20}6\,d\mu
 + c(c_\gamma)^2
           \int_M |A|\,|\nabla_{(2)} A|\,\gamma^\frac{5s-20}6\,d\mu
\\&\qquad
 + c(c_\gamma)^4
           \int_M |A|^2\,\gamma^\frac{5s-32}6\,d\mu
\,.
\end{align*}
Absorbing, estimating, and using $s\ge\frac{32}5$, we find
\begin{align}
c(c_\gamma)^2
           \int_M |\nabla A|^2\,\gamma^\frac{5s-20}6\,d\mu
&\le 
   c(c_\gamma)^2
           \int_M |A|\,|\nabla_{(2)} A|\,\gamma^\frac{5s-20}6\,d\mu
 + c(c_\gamma)^4
           \int_M |A|^2\,\gamma^\frac{5s-32}6\,d\mu
\notag\\
&\le 
   \frac12
           \int_M |\nabla_{(2)} A|^2\,\gamma^\frac{5s-8}6\,d\mu
 + c(c_\gamma)^4\mu_\gamma(f)^\frac12
           \vn{A}_{4,[\gamma>0]}^2
\,.
\label{EQplace5}
\end{align}
Returning now to the last term of \eqref{EQplace8}, we estimate
\begin{align*}
         \bigg(
           \int_M |\nabla_{(2)}A|^2\,\gamma^\frac{5s-8}6\,d\mu
         \bigg)^\frac65
&\le
        c\bigg(
           \int_M |\nabla A|\,|\nabla_{(3)}A|\,\gamma^\frac{5s-8}6\,d\mu
         \bigg)^\frac65
      + c(c_\gamma)^\frac65
         \bigg(
           \int_M |\nabla A|\,|\nabla_{(2)}A|\,\gamma^\frac{5s-14}6\,d\mu
         \bigg)^\frac65
\\
&\le
         \frac12\bigg(
           \int_M |\nabla_{(2)}A|^2\,\gamma^\frac{5s-8}6\,d\mu
         \bigg)^\frac65
      + c\bigg(
           \int_M |\nabla A|\,|\nabla_{(3)}A|\,\gamma^\frac{5s-8}6\,d\mu
         \bigg)^\frac65
\\&\qquad
      + c(c_\gamma)^\frac{12}5
         \bigg(
           \int_M |\nabla A|^2\,\gamma^\frac{5s-14}6\,d\mu
         \bigg)^\frac65
\,.
\end{align*}
Absorbing and using \eqref{EQplace5}, we find
\begin{align}
         \bigg(
           \int_M |\nabla_{(2)}A|^2\,\gamma^\frac{5s-8}6\,d\mu
         \bigg)^\frac65
&\le
        c\bigg(
           \int_M |\nabla A|\,|\nabla_{(3)}A|\,\gamma^\frac{5s-8}6\,d\mu
         \bigg)^\frac65
 + c(c_\gamma)^\frac{24}{5}\mu_\gamma(f)^\frac{3}{5}
           \vn{A}_{4,[\gamma>0]}^\frac{12}{5}
\,.
\label{EQplace6}
\end{align}
Note that \eqref{EQplace5} implies, using also $s\ge\frac{20}{3}$,
\begin{align*}
 c(c_\gamma)^{\frac{12}5}
         \bigg(
           \int_M |\nabla A|^2\,\gamma^\frac{5s-8}3\,d\mu
         \bigg)^\frac32
&\le 
 c(c_\gamma)^{\frac{12}5}
         \bigg(
           \int_M |A|\,|\nabla_{(2)} A|\,\gamma^\frac{5s-20}6\,d\mu
         \bigg)^\frac32
 + c(c_\gamma)^{\frac{27}{5}}
         \bigg(
           \int_M |A|^2\,\gamma^\frac{5s-32}6\,d\mu
         \bigg)^\frac32
\\&\le
 c(c_\gamma)^{\frac{12}5}
         \bigg(
           \int_M |A|^\frac{12}{7}\,\gamma^\frac{6s-40}7\,d\mu
         \bigg)^\frac78
         \bigg(
           \int_M |\nabla_{(2)} A|^\frac{12}{5}\,\gamma^\frac{4s}{5}\,d\mu
         \bigg)^\frac58
\\&\qquad
 + c(c_\gamma)^{\frac{27}{5}}
                \mu_\gamma(f)^\frac34
           \vn{A}_{4,[\gamma>0]}^3
\\&\le
   (c_\gamma)^{-\frac85}
         \bigg(
           \int_M |\nabla_{(2)} A|^\frac{12}{5}\,\gamma^\frac{4s}{5}\,d\mu
         \bigg)^\frac54
 + (c_\gamma)^{\frac{32}5}
         \bigg(
           \int_M |A|^\frac{12}{7}\,\gamma^\frac{6s-40}7\,d\mu
         \bigg)^\frac74
\\&\qquad
 + c(c_\gamma)^{\frac{27}{5}}
                \mu_\gamma(f)^\frac34
           \vn{A}_{4,[\gamma>0]}^3
\\&\le
   (c_\gamma)^{-\frac85}
         \bigg(
           \int_M |\nabla_{(2)} A|^\frac{12}{5}\,\gamma^\frac{4s}{5}\,d\mu
         \bigg)^\frac54
 + (c_\gamma)^{\frac{32}5}
   \mu_\gamma(f)\vn{A}_{4,[\gamma>0]}^3
\\&\qquad
 + c(c_\gamma)^{\frac{27}{5}}
                \mu_\gamma(f)^\frac34
           \vn{A}_{4,[\gamma>0]}^3
\,.
\end{align*}
This yields the estimate
\begin{align*}
        c\bigg(
           \int_M |\nabla A|\,|\nabla_{(3)}A|\,\gamma^\frac{5s-8}6\,d\mu
         \bigg)^\frac65
&\le
        c\bigg(
           \int_M |\nabla A|^2\,\gamma^\frac{5s-8}3\,d\mu
         \bigg)^\frac35
         \bigg(
           \int_M |\nabla_{(3)}A|^2\,\gamma^{s}\,d\mu
         \bigg)^\frac35
\\
&\le
 c(c_\gamma)^{\frac{12}5}
         \bigg(
           \int_M |\nabla A|^2\,\gamma^\frac{5s-8}3\,d\mu
         \bigg)^\frac32
+ (c_\gamma)^{-\frac85}
           \int_M |\nabla_{(3)}A|^2\,\gamma^{s}\,d\mu
\\&\le
  (c_\gamma)^{-\frac85}
           \int_M |\nabla_{(3)}A|^2\,\gamma^{s}\,d\mu
+ 
  (c_\gamma)^{-\frac85}
         \bigg(
           \int_M |\nabla_{(2)} A|^\frac{12}{5}\,\gamma^\frac{4s}{5}\,d\mu
         \bigg)^\frac54
\\&\qquad
 + c(c_\gamma)^{\frac{12}{5}}
    [(c_\gamma)^4\mu_\gamma(f)]^\frac34
	   \big( 1 + [(c_\gamma)^4\mu_\gamma(f)]^\frac14 \big)\vn{A}_{4,[\gamma>0]}^3
\,.
\end{align*}
which we combine with \eqref{EQplace6} and \eqref{EQplace8} to find
\begin{align}
   \bigg(
   s(c_\gamma)\int_M &|A|^\frac32\,|\nabla_{(2)}A|^\frac32\,\gamma^{\frac{3s}{4}-1}\,d\mu
   \bigg)^{\frac43}
\notag\\&\le
  c\vn{A}_{4,[\gamma>0]}^\frac23
           \int_M \big(|\nabla_{(3)}A|^2 + |A|^8\big)\,\gamma^{s}\,d\mu
+ 
  c\vn{A}_{4,[\gamma>0]}^\frac23
         \bigg(
           \int_M |\nabla_{(2)} A|^\frac{12}{5}\,\gamma^\frac{4s}{5}\,d\mu
         \bigg)^\frac54
\notag\\&\quad
 + c(c_\gamma)^4
    [(c_\gamma)^4\mu_\gamma(f)]^\frac35
	   \Big(
         [(c_\gamma)^4\mu_\gamma(f)]^\frac{3}{20}
           + [(c_\gamma)^4\mu_\gamma(f)]^\frac{2}{5}
           + \vn{A}_{4,[\gamma>0]}^\frac{1}{15}
           \Big)\vn{A}_{4,[\gamma>0]}^3
\,.
\label{EQplace7}
\end{align}
This estimates the third term in \eqref{EQpreest}.
Combining now \eqref{EQpreest}, \eqref{EQplace7} and \eqref{EQplace4} we find
\begin{align}
\int_M &|A|^2|\nabla_{(2)}A|^2\,\gamma^s\,d\mu
 + \bigg(\int_M |\nabla_{(2)}A|^{\frac{12}5}\,\gamma^{\frac{4s}{5}}\,d\mu\bigg)^\frac54
 + \bigg(\int_M |\nabla A|^3\,\gamma^{\frac{s}{2}}\,d\mu\bigg)^2
\notag\\
&\le c\bigg(
 	\int_M |A|^\frac12\,|\nabla A|\,|\nabla_{(2)}A|^\frac32\,\gamma^{\frac{3s}{4}}\,d\mu
      \bigg)^{\frac43}
 + \bigg(\int_M |\nabla_{(2)}A|^{\frac{12}5}\,\gamma^{\frac{4s}{5}}\,d\mu\bigg)^\frac54
\notag \\&\qquad
   + c\bigg(
 	\int_M |A|^\frac32\,|\nabla_{(2)}A|^\frac12\,|\nabla_{(3)}A|\,\gamma^{\frac{3s}{4}}\,d\mu
      \bigg)^{\frac43}
 + \bigg(\int_M |\nabla A|^3\,\gamma^{\frac{s}{2}}\,d\mu\bigg)^2
\notag \\&\qquad
   + c\bigg(
   s(c_\gamma)\int_M |A|^\frac32\,|\nabla_{(2)}A|^\frac32\,\gamma^{\frac{3s}{4}-1}\,d\mu
      \bigg)^{\frac43}
   + c\vn{A}_{4,[\gamma>0]}^\frac23
                         \int_M |A|^2\,|\nabla_{(2)}A|^2\,\gamma^{s}\,d\mu
\notag\\
&\le
     c\big(\vn{A}^\frac23_{4,[\gamma>0]} + 
       \vn{A}_{4,[\gamma>0]}^\frac43\big)
     \int_M \big(|\nabla_{(3)}A|^2 + |A|^2\,|\nabla_{(2)}A|^2 + |\nabla A|^4 + |A|^8\big)\,\gamma^s\,d\mu
\notag\\&\quad
 + c\vn{A}_{4,[\gamma>0]}^\frac{20}{3}
    \bigg(\int_M |\nabla A|^3\,\gamma^\frac{s}{2}\,d\mu\bigg)^2
 + c\vn{A}_{4,[\gamma>0]}^\frac23
         \bigg(
           \int_M |\nabla_{(2)} A|^\frac{12}{5}\,\gamma^\frac{4s}{5}\,d\mu
         \bigg)^\frac54
\notag\\&\quad
 + c(c_\gamma)^4[(c_\gamma)^4\mu_\gamma(f)]^\frac{1}{2}\Big(
   \vn{A}_{4,[\gamma>0]}^\frac{11}{3}
 + \vn{A}_{4,[\gamma>0]}^\frac{2}{3}
 + [(c_\gamma)^4\mu_\gamma(f)]^\frac{3}{10}
   \vn{A}_{4,[\gamma>0]}^\frac{1}{15}
\notag\\&\qquad\qquad\qquad\quad\ 
       + [(c_\gamma)^4\mu_\gamma(f)]^\frac{1}{4}
           + [(c_\gamma)^4\mu_\gamma(f)]^\frac{4}{5}
           + [(c_\gamma)^4\mu_\gamma(f)]^\frac{1}{10}
             \vn{A}_{4,[\gamma>0]}^\frac{1}{15}
           \Big)\vn{A}_{4,[\gamma>0]}^3
      \,.
\label{EQbigest}
\end{align}
We combine \eqref{EQbigest} above with our earlier estimate \eqref{EQnabla4} to find
\begin{align}
\int_M &\big(|A|^2|\nabla_{(2)}A|^2 + |\nabla A|^4\big)\,\gamma^s\,d\mu
 + \bigg(\int_M |\nabla_{(2)}A|^{\frac{12}5}\,\gamma^{\frac{4s}{5}}\,d\mu\bigg)^\frac54
 + \bigg(\int_M |\nabla A|^3\,\gamma^{\frac{s}{2}}\,d\mu\bigg)^2
\notag\\
&\le
     \big(\theta +
       c\vn{A}_{4,[\gamma>0]}^\frac43\big)
     \int_M \big(|\nabla_{(3)}A|^2 + |A|^2\,|\nabla_{(2)}A|^2 + |\nabla A|^4 + |A|^8\big)\,\gamma^s\,d\mu
\notag\\&\quad
 + c\vn{A}_{4,[\gamma>0]}^\frac{20}{3}
    \bigg(\int_M |\nabla A|^3\,\gamma^\frac{s}{2}\,d\mu\bigg)^2
 + c\vn{A}_{4,[\gamma>0]}^\frac23
         \bigg(
           \int_M |\nabla_{(2)} A|^\frac{12}{5}\,\gamma^\frac{4s}{5}\,d\mu
         \bigg)^\frac54
\notag\\&\quad
 + c(c_\gamma)^4\Big(
           1
           + \vn{A}_{4,[\gamma>0]}^4
           + [(c_\gamma)^4\mu_\gamma(f)]^6
           \Big)\vn{A}_{4,[\gamma>0]}^3
      \,.
\label{EQbiggerest}
\end{align}
Note that we have interpolated terms inside the parentheses of the coefficient of the first and last terms.

It remains only to estimate the term $\int_M |A|^8\,\gamma^s\,d\mu$, which we
do so now with the Michael-Simon Sobolev inequality:
\begin{align}
\int_M |A|^8\,\gamma^s\,d\mu
&\le
     c\bigg(
        \int_M |A|^5\,|\nabla A|\,\gamma^\frac{3s}{4}\,d\mu
      \bigg)^\frac43
   + c\bigg(
        \int_M |A|^7\,\gamma^\frac{3s}{4}\,d\mu
      \bigg)^\frac43
   + c\bigg(
        (c_\gamma)\int_M |A|^6\,\gamma^\frac{3s-4}{4}\,d\mu
      \bigg)^\frac43
\notag\\
&\le
     c\bigg(
        \int_M |A|^6\,\gamma^\frac{s}{2}\,d\mu
      \bigg)^\frac23
      \bigg(
        \int_M |A|^4\,|\nabla A|^2\,\gamma^{s}\,d\mu
      \bigg)^\frac23
   + c\vn{A}_{4,[\gamma>0]}^\frac43
      \int_M |A|^8\,\gamma^s\,d\mu
\notag\\&\qquad
   + c(c_\gamma)^\frac43
      \vn{A}_{4,[\gamma>0]}^\frac83
      \bigg(
        \int_M |A|^8\,\gamma^{s}\,d\mu
      \bigg)^\frac23
\notag\\
&\le
     c\vn{A}_{4,[\gamma>0]}^\frac43
      \bigg(
        \int_M |A|^8\,\gamma^{s}\,d\mu
      \bigg)^\frac23
      \bigg(
        \int_M |\nabla A|^4\,\gamma^s\,d\mu
      \bigg)^\frac13
   + c\vn{A}_{4,[\gamma>0]}^\frac43
      \int_M |A|^8\,\gamma^s\,d\mu
\notag\\&\qquad
   + c(c_\gamma)^4
      \vn{A}_{4,[\gamma>0]}^\frac{16}3
\notag\\
&\le
     c\vn{A}_{4,[\gamma>0]}^\frac43
      \int_M \big(|\nabla A|^4 + |A|^8\big)\,\gamma^s\,d\mu
   + c(c_\gamma)^4
      \vn{A}_{4,[\gamma>0]}^\frac{16}3
\,.
\label{EQinter}
\end{align}
Combining this estimate with \eqref{EQbiggerest} and also the interpolation
$2|A|^4|\nabla A|^2 \le |A|^8 + |\nabla A|^4$ we conclude
\begin{align}
\int_M &\big(|A|^2|\nabla_{(2)}A|^2 + |A|^4|\nabla A|^2 + |\nabla A|^4 + |A|^8\big)\,\gamma^s\,d\mu
\notag\\&\hskip-3mm
 + \bigg(\int_M |\nabla_{(2)}A|^{\frac{12}5}\,\gamma^{\frac{4s}{5}}\,d\mu\bigg)^\frac54
 + \bigg(\int_M |\nabla A|^3\,\gamma^{\frac{s}{2}}\,d\mu\bigg)^2
\notag\\
&\le
     \big(\theta +
       c\vn{A}_{4,[\gamma>0]}^\frac43\big)
     \int_M \big(|\nabla_{(3)}A|^2 + |A|^2\,|\nabla_{(2)}A|^2 + |\nabla A|^4 + |A|^8\big)\,\gamma^s\,d\mu
\notag\\&\quad
 + c\vn{A}_{4,[\gamma>0]}^\frac{20}{3}
    \bigg(\int_M |\nabla A|^3\,\gamma^\frac{s}{2}\,d\mu\bigg)^2
 + c\vn{A}_{4,[\gamma>0]}^\frac23
         \bigg(
           \int_M |\nabla_{(2)} A|^\frac{12}{5}\,\gamma^\frac{4s}{5}\,d\mu
         \bigg)^\frac54
\notag\\&\quad
 + c(c_\gamma)^4\Big(
           1
           + \vn{A}_{4,[\gamma>0]}^4
           + [(c_\gamma)^4\mu_\gamma(f)]^6
           \Big)\vn{A}_{4,[\gamma>0]}^3
      \,,
\notag
\end{align}
as required.

Next we consider (iv).
We begin by estimating with the Michael-Simon Sobolev inequality
\begin{align*}
\int_M |A|^{10}\gamma^s\,d\mu
 &\le c\bigg(\int_M |A|^\frac{13}{2}|\nabla A|\,\gamma^\frac{3s}{4}\,d\mu\bigg)^\frac43
    + c\bigg(\int_M |A|^\frac{17}{2}\,\gamma^\frac{3s}{4}\,d\mu\bigg)^\frac43
    + c(\cg)^\frac43\bigg(\int_M |A|^\frac{15}{2}\,\gamma^\frac{3s-4}{4}\,d\mu\bigg)^\frac43
\\
 &\le c\bigg(\int_M |A|^6|\nabla A|^2\,\gamma^s\,d\mu\bigg)^\frac23
       \bigg(\int_M |A|^7\,\gamma^\frac{s}{2}\,d\mu\bigg)^\frac23
    + c\vn{A}_{4,[\gamma>0]}^\frac43\int_M |A|^{10}\,\gamma^s\,d\mu
\\&\quad
    + c(\cg)^\frac43\bigg(\int_M |A|^5\,\gamma^\frac{s-4}{2}\,d\mu\bigg)^\frac23
                    \bigg(\int_M |A|^{10}\,\gamma^s\,d\mu\bigg)^\frac23
\\
 &\le \theta\int_M |A|^6|\nabla A|^2\,\gamma^s\,d\mu
    + c\big(\theta + \vn{A}_{4,[\gamma>0]}^\frac43\big)
       \int_M |A|^{10}\,\gamma^s\,d\mu
\\&\quad
    + c\bigg(\int_M |A|^7\,\gamma^\frac{s}{2}\,d\mu\bigg)^2
    + c(\cg)^4\bigg(\int_M |A|^5\,\gamma^\frac{s-4}{2}\,d\mu\bigg)^2
\\
 &\le \theta\int_M |A|^6|\nabla A|^2\,\gamma^s\,d\mu
    + c\big(\theta + \vn{A}_{4,[\gamma>0]}^4 + \vn{A}_{4,[\gamma>0]}^\frac43\big)
       \int_M |A|^{10}\,\gamma^s\,d\mu
\\&\quad
    + c(\cg)^4\vn{A}_{4,[\gamma>0]}^4\int_M |A|^6\,\gamma^{s-4}\,d\mu
\\
 &\le \theta\int_M |A|^6|\nabla A|^2\,\gamma^s\,d\mu
    + c\big(\theta + \vn{A}_{4,[\gamma>0]}^4\big)
       \int_M |A|^{10}\,\gamma^s\,d\mu
    + c(\cg)^8\mu_\gamma(f_t)^\frac12\vn{A}_{4,[\gamma>0]}^{2}
\,.
\end{align*}
Note that in the last step we used $s\ge8$.

We shall move gradually higher in order.
Next we estimate a first order term:
\begin{align*}
\int_M |\nabla A|^2|A|^6\gamma^s\,d\mu
 &\le c\bigg(\int_M |A|^\frac{9}{2}|\nabla_{(2)}A||\nabla A|^\frac12\,\gamma^\frac{3s}{4}\,d\mu\bigg)^\frac43
    + c\bigg(\int_M |A|^\frac{7}{2}|\nabla A|^\frac52\,\gamma^\frac{3s}{4}\,d\mu\bigg)^\frac43
\\&\quad
    + c\bigg(\int_M |A|^\frac{11}{2}|\nabla A|^\frac32\,\gamma^\frac{3s}{4}\,d\mu\bigg)^\frac43
    + c(\cg)^\frac43\bigg(\int_M |A|^\frac{9}{2}|\nabla A|^\frac32\,\gamma^\frac{3s-4}{4}\,d\mu\bigg)^\frac43
\\
 &\le c\bigg(\int_M |\nabla_{(2)}A|^2|A|^4\,\gamma^s\,d\mu\bigg)^\frac23
       \bigg(\int_M |\nabla A||A|^5\,\gamma^\frac{s}{2}\,d\mu\bigg)^\frac23
\\&\quad
    + c\bigg(\int_M |\nabla A|^5\,\gamma^s\,d\mu\bigg)^\frac23
       \bigg(\int_M |A|^7\,\gamma^\frac{s}{2}\,d\mu\bigg)^\frac23
\\&\quad
    + c(\cg)^\frac43\bigg(\int_M |\nabla A|^2|A|^4\,\gamma^{s-2}\,d\mu\bigg)^\frac23
                    \bigg(\int_M |\nabla A||A|^5\,\gamma^\frac{s}{2}\,d\mu\bigg)^\frac23
\\&\quad
    + c\vn{A}_{4,[\gamma>0]}^4\int_M |\nabla A|^2|A|^6\gamma^s\,d\mu
\\
 &\le \theta\int_M \big(|\nabla_{(2)}A|^2|A|^4 + |\nabla A|^5\big)\,\gamma^s\,d\mu
     + c\bigg(\int_M |\nabla A||A|^5\,\gamma^\frac{s}{2}\,d\mu\bigg)^2
\\&\quad
    + c\vn{A}_{4,[\gamma>0]}^4
       \int_M |A|^{10}\,\gamma^s\,d\mu
    + c(\cg)^2\int_M |\nabla A|^2|A|^4\,\gamma^{s-2}\,d\mu
\\&\quad
    + c\vn{A}_{4,[\gamma>0]}^4\int_M |\nabla A|^2|A|^6\gamma^s\,d\mu
\\
 &\le \theta\int_M \big(|\nabla_{(2)}A|^2|A|^4 + |\nabla A|^5\big)\,\gamma^s\,d\mu
    + c\vn{A}_{4,[\gamma>0]}^4\int_M |\nabla A|^2|A|^6\gamma^s\,d\mu
\\&\quad
    + c\vn{A}_{4,[\gamma>0]}^4
       \int_M |A|^{10}\,\gamma^s\,d\mu
    + c(\cg)^2\int_M |\nabla A|^2|A|^4\,\gamma^{s-2}\,d\mu
\,.
\end{align*}
Combining this with the estimate (recall $3s\ge20$)
\begin{align*}
    c(\cg)^2\int_M |\nabla A|^2|A|^4\,\gamma^{s-2}\,d\mu
&\le 
    \theta\int_M |\nabla A|^5\,\gamma^s\,d\mu
  + c(\cg)^{\frac{10}{3}}\int_M |A|^\frac{20}{3}\,\gamma^{\frac{3s-10}{3}}\,d\mu
\\
&\le 
    \theta\int_M |\nabla A|^5\,\gamma^s\,d\mu
  + c(\cg)^{\frac{10}{3}}\bigg(\int_M |A|^{10}\,\gamma^{s}\,d\mu\bigg)^\frac12
                         \bigg(\int_M |A|^\frac{10}{3}\,\gamma^{\frac{3s-20}{6}}\,d\mu\bigg)^\frac12
\\
&\le 
    \theta\int_M |\nabla A|^5\,\gamma^s\,d\mu
  + c(\cg)^{\frac{10}{3}}\bigg(\int_M |A|^{10}\,\gamma^{s}\,d\mu\bigg)^\frac12
                         \bigg(\mu_\gamma(f)^\frac16\vn{A}_{4,[\gamma>0]}^\frac{10}{3}\bigg)^\frac12
\\
&\le 
    \theta\int_M \big(|\nabla A|^5+|A|^{10}\big)\,\gamma^s\,d\mu
  + c(\cg)^6[(\cg)^4\mu_\gamma(f)]^\frac16\vn{A}_{4,[\gamma>0]}^\frac{10}{3}
\end{align*}
yields
\begin{align*}
\int_M |\nabla A|^2|A|^6\gamma^s\,d\mu
 &\le \theta\int_M \big(|\nabla_{(2)}A|^2|A|^4 + |\nabla A|^5 + |A|^{10}\big)\,\gamma^s\,d\mu
    + c\vn{A}_{4,[\gamma>0]}^4\int_M |\nabla A|^2|A|^6\gamma^s\,d\mu
\\&\quad
    + c\vn{A}_{4,[\gamma>0]}^4
       \int_M |A|^{10}\,\gamma^s\,d\mu
  + c(\cg)^6[(\cg)^4\mu_\gamma(f)]^\frac16\vn{A}_{4,[\gamma>0]}^\frac{10}{3}
\,.
\end{align*}
These estimates combine to yield
\begin{align}
\int_M \big(|\nabla A|^2|A|^6 + |A|^{10}\big)\gamma^s\,d\mu
 &\le \theta\int_M \big(|\nabla_{(2)}A|^2|A|^4 + |\nabla A|^5 + |A|^{10}\big)\,\gamma^s\,d\mu
\notag\\&\quad
\label{EQcombined1}
    + c\vn{A}_{4,[\gamma>0]}^4
       \int_M \big(|\nabla A|^2|A|^6 + |A|^{10}\big)\gamma^s\,d\mu
\\&\quad
  + c(\cg)^6\big([(\cg)^4\mu_\gamma(f)]^\frac12 + [(\cg)^4\mu_\gamma(f)]^\frac16\big)
            \big(\vn{A}_{4,[\gamma>0]}^\frac{10}{3} + \vn{A}_{4,[\gamma>0]}^{2}\big)
\notag\,.
\end{align}
Now we estimate
\begin{align*}
\int_M |\nabla A|^5\gamma^s\,d\mu
 &\le 
 c\int_M |\nabla_{(2)}A|\,|\nabla A|^3\,|A|\,\gamma^s\,d\mu
 + c(\cg)\int_M |\nabla A|^4\,|A|\,\gamma^{s-1}\,d\mu
\\
 &\le 
 c\int_M |\nabla_{(2)}A|^2|\nabla A|^2\,\gamma^s\,d\mu
 + \frac{\delta}{10}\int_M |\nabla A|^4\,|A|^2\,\gamma^s\,d\mu
\\&\quad
 + \frac12\int_M |\nabla A|^5\,\gamma^{s}\,d\mu
 + c(\cg)\int_M |\nabla A|^4\,|A|\,\gamma^{s-1}\,d\mu
\\
 &\le 
 c\int_M |\nabla_{(2)}A|^2|\nabla A|^2\,\gamma^s\,d\mu
 + \delta\int_M |\nabla A|^5\,\gamma^s\,d\mu
 + \delta\int_M |A|^{10}\,\gamma^s\,d\mu
\\&\quad
 + \frac34\int_M |\nabla A|^5\,\gamma^{s}\,d\mu
 + c(\cg)^{5}\int_M |A|^5\,\gamma^{s-5}\,d\mu
\end{align*}
so that absorbing for $\delta$ sufficiently small yields
\begin{align*}
\int_M |\nabla A|^5\gamma^s\,d\mu
 &\le 
 c\int_M |\nabla_{(2)}A|^2|\nabla A|^2\,\gamma^s\,d\mu
 + \theta\int_M |A|^{10}\,\gamma^s\,d\mu
\\&\quad
 + c(\cg)^{5}\int_M |A|^5\,\gamma^{s-5}\,d\mu
\,.
\end{align*}
Now (recall $4s\ge25$)
\begin{align*}
 c(\cg)^{5}\int_M |A|^5\,\gamma^{s-5}\,d\mu
 &\le \theta\int_M |A|^{10}\,\gamma^{s}\,d\mu
 + c(\cg)^\frac{25}{4}\int_M |A|^{\frac{15}{4}}\,\gamma^{\frac{4s-25}{4}}\,d\mu
 \\
 &\le \theta\int_M |A|^{10}\,\gamma^{s}\,d\mu
 + c(\cg)^6[(\cg)^4\mu_\gamma(f)]^\frac{1}{16}\vn{A}_{4,[\gamma>0]}^{\frac{15}{4}}
\,.
\end{align*}
Combining this with the previous estimate we find
\begin{align*}
\int_M |\nabla A|^5\gamma^s\,d\mu
 &\le 
 c\int_M |\nabla_{(2)}A|^2|\nabla A|^2\,\gamma^s\,d\mu
 + \theta\int_M |A|^{10}\,\gamma^s\,d\mu
\\&\quad
 + c(\cg)^6[(\cg)^4\mu_\gamma(f)]^\frac{1}{16}\vn{A}_{4,[\gamma>0]}^{\frac{15}{4}}
\,.
\end{align*}
Using this we estimate the RHS of \eqref{EQcombined1} and absorb to find
\begin{align}
\int_M \big(|\nabla A|^2|A|^6 + |A|^{10}\big)\gamma^s\,d\mu
&\le \theta\int_M \big(|\nabla_{(2)}A|^2|A|^4 + |\nabla_{(2)}A|^2|\nabla A|^2\big)\,\gamma^s\,d\mu
\notag\\&\quad
    + c\vn{A}_{4,[\gamma>0]}^4
       \int_M \big(|\nabla A|^2|A|^6 + |A|^{10}\big)\gamma^s\,d\mu
\notag\\&\quad
  + c(\cg)^6\vn{A}_{4,[\gamma>0]}^2
            \big(1+[(\cg)^4\mu_\gamma(f)]^\frac12
            \big)
            \big(1 + \vn{A}_{4,[\gamma>0]}^{\frac{3}{4}}
	    \big)
\label{EQcombinedx}\,.
\end{align}
Note that we interpolated some terms in the last product on the right hand side.

Now we move on to terms involving $\nabla_{(2)}A$.
We begin with
\begin{align}
\int_M &|\nabla_{(2)}A|^2|A|^4\,\gamma^s\,d\mu
\notag\\
 &\le c\bigg(
          \int_M |\nabla_{(3)}A||\nabla_{(2)}A|^\frac12|A|^3
                 \gamma^{\frac{3s}{4}}\,d\mu
        + \int_M |\nabla_{(2)}A|^\frac32|\nabla A||A|^2
                 \gamma^{\frac{3s}{4}}\,d\mu
\notag\\&\qquad
        + \int_M |\nabla_{(2)}A|^\frac32|A|^4
                 \gamma^{\frac{3s}{4}}\,d\mu
        + (\cg)\int_M |\nabla_{(2)}A|^\frac32|A|^3
                 \gamma^{\frac{3s-4}{4}}\,d\mu
       \bigg)^\frac43
\notag\\
 &\le 
          \bigg(
          \int_M |\nabla_{(3)}A|^2|A|^2
                 \gamma^{s}\,d\mu
          \bigg)^\frac23
          \bigg(
          \int_M |\nabla_{(2)}A||A|^4
                 \gamma^{\frac{s}{2}}\,d\mu
          \bigg)^\frac23
\notag\\&\quad
        + c
       \bigg(
        \int_M |\nabla_{(2)}A|^2|\nabla A|^2
                 \gamma^s\,d\mu
       \bigg)^\frac23
       \bigg(
        \int_M |\nabla_{(2)}A||A|^4
                 \gamma^{\frac{s}{2}}\,d\mu
       \bigg)^\frac23
\notag\\&\quad
        + c
       \bigg(
        \int_M |\nabla_{(2)}A|^2|A|^4
                 \gamma^s\,d\mu
       \bigg)^\frac23
       \bigg(
        \int_M |\nabla_{(2)}A||A|^4
                 \gamma^{\frac{s}{2}}\,d\mu
       \bigg)^\frac23
\notag\\&\quad
       + c(\cg)^\frac43
       \bigg(
         \int_M |\nabla_{(2)}A|^2|A|^2
                 \gamma^{s-4}\,d\mu
       \bigg)^\frac23
       \bigg(
        \int_M |\nabla_{(2)}A||A|^4
                 \gamma^{\frac{s}{2}}\,d\mu
       \bigg)^\frac23
\notag\\
 &\le 
          c\int_M |\nabla_{(3)}A|^2|A|^2
                 \gamma^{s}\,d\mu
         + \theta\int_M \big(|\nabla_{(2)}A|^2|\nabla A|^2 + |\nabla_{(2)}A|^2|A|^4\big)
                 \gamma^s\,d\mu
\notag\\&\quad
           + c\bigg(\int_M |\nabla_{(2)}A||A|^4\gamma^{\frac{s}{2}}\,d\mu\bigg)^2
       + c(\cg)^2
         \int_M |\nabla_{(2)}A|^2|A|^2
                 \gamma^{s-4}\,d\mu
\notag\\
 &\le 
          c\int_M |\nabla_{(3)}A|^2|A|^2
                 \gamma^{s}\,d\mu
         + (\theta + c\vn{A}_{4,[\gamma>0]}^4)\int_M
                 \big(
             |\nabla_{(2)}A|^2|\nabla A|^2
           + |\nabla_{(2)}A|^2|A|^4
                 \big)
                 \gamma^s\,d\mu
\notag\\&\quad
       + c(\cg)^4
         \int_M |\nabla_{(2)}A|^2
                 \gamma^{s-8}\,d\mu
\,.
\label{EQcombined4}
\end{align}
Since
\begin{align*}
(\cg)^2\int_M |\nabla_{(k)}A|^2\,\gamma^{s-4}\,d\mu
&\le \frac12(\cg)^2\int_M |\nabla_{(k)}A|^2\,\gamma^{s-4}\,d\mu
 + \theta\int_M |\nabla_{(k+1)}A|^2\,\gamma^{s}\,d\mu
\\&\qquad
 + c(\cg)^4\int_M |\nabla_{(k-1)}A|^2\,\gamma^{s-8}\,d\mu
\end{align*}
implies
\begin{equation}
(\cg)^2\int_M |\nabla_{(k)}A|^2\,\gamma^{s-4}\,d\mu
 \le \theta\int_M |\nabla_{(k+1)}A|^2\,\gamma^{s}\,d\mu
 + c(\cg)^4\int_M |\nabla_{(k-1)}A|^2\,\gamma^{s-8}\,d\mu
\,,
\label{EQinducto}
\end{equation}
we have
\begin{align*}
c(\cg)^4\int_M |\nabla_{(2)}A|^2\gamma^{s-8}\,d\mu
&\le
       \theta\int_M |\nabla_{(4)}A|^2\gamma^{s}\,d\mu
     + c(\cg)^{8}\int_M |A|^2\gamma^{s-16}\,d\mu
\\
&\le
       \theta\int_M |\nabla_{(4)}A|^2\gamma^{s}\,d\mu
     + c(\cg)^{6}[(\cg)^4\mu_\gamma(f)]^\frac12\vn{A}_{4,[\gamma>0]}^2
\,.
\end{align*}
Combining this with the estimate \eqref{EQcombined4} yields
\begin{align}
\int_M |\nabla_{(2)}A|^2|A|^4\,\gamma^s\,d\mu
 &\le 
           (\theta + c\vn{A}_{4,[\gamma>0]}^4)\int_M
                 \big(
             |\nabla_{(4)}A|^2
           + |\nabla_{(2)}A|^2|\nabla A|^2
           + |\nabla_{(2)}A|^2|A|^4
                 \big)
                 \gamma^s\,d\mu
\notag\\&\quad
     + c\int_M |\nabla_{(3)}A|^2|A|^2
                 \gamma^{s}\,d\mu
     + c(\cg)^{6}[(\cg)^4\mu_\gamma(f)]^\frac12\vn{A}_{4,[\gamma>0]}^2
\,.
\label{EQcombined41}
\end{align}
In order to estimate the remaining term involving $\nabla_{(2)}A$ we first note the following equality:
\begin{align*}
	- \int_M (\nabla_{ij}&A_{kl}\nabla^jA^{kl}) (\nabla^i|\nabla A|^2)
         \,\gamma^s\,d\mu\\
	 &= 
- 2\int_M (\nabla_{ij}A_{kl}\nabla^jA^{kl}) (\nabla^i\nabla_pA_{qr}\nabla^pA^{qr})
         \,\gamma^s\,d\mu
 = -\frac12\int_M \big|\nabla|\nabla A|^2\big|^2\,\gamma^s\,d\mu
\,.
\end{align*}
In particular, this term has a sign.
We use this to estimate
\begin{align}
\int_M |\nabla_{(2)}A|^2|\nabla A|^2\,\gamma^s\,d\mu
&\le c\int_M |\nabla_{(3)}A||\nabla A|^3\,\gamma^s\,d\mu
   + c(\cg)\int_M |\nabla_{(2)}A||\nabla A|^3\,\gamma^{s-1}\,d\mu
\notag\\
&\le 
\frac12\int_M |\nabla_{(2)}A|^2|\nabla A|^2\,\gamma^s\,d\mu
 + c\int_M |\nabla_{(3)}A||\nabla A|^3\,\gamma^s\,d\mu
 \notag\\&\qquad + c(\cg)^2\int_M |\nabla A|^4\,\gamma^{s-2}\,d\mu
\,.
\label{EQcombined5}
\end{align}
In order to control the last two terms on the right, we need two auxilliary estimates.
The first is obtained by estimating
\begin{align*}
  (\cg)^2&\int_M |\nabla A|^4\,\gamma^{s-2}\,d\mu
\le c(\cg)^2\int_M |\nabla_{(2)}A||\nabla A|^2|A|\,\gamma^{s-2}\,d\mu
  + c(\cg)^3\int_M |\nabla A|^3|A|\,\gamma^{s-3}\,d\mu
\\&\le \frac12
  (\cg)^2\int_M |\nabla A|^4\,\gamma^{s-2}\,d\mu
  + c(\cg)^2\int_M |\nabla_{(2)}A|^2|A|^2\,\gamma^{s-2}\,d\mu
  + c(\cg)^6\vn{A}_{4,[\gamma>0]}^4
\,.
\end{align*}
Absorbing gives
\begin{align*}
  (\cg)^2\int_M |\nabla A|^4\,\gamma^{s-2}\,d\mu
&\le 
    c(\cg)^2\int_M |\nabla_{(2)}A|^2|A|^2\,\gamma^{s-2}\,d\mu
  + c(\cg)^6\vn{A}_{4,[\gamma>0]}^4
\,.
\end{align*}
Estimating the first term on the right as in \eqref{EQcombined4} (the only
difference here is that we have $s-2$ instead of $s-4$), using also
\eqref{EQinducto}, we find
\begin{align}
  (\cg)^2\int_M |\nabla A|^4\,\gamma^{s-2}\,d\mu
&\le 
    \theta \int_M \big(|\nabla_{(4)}A|^2 + |\nabla_{(2)}A|^2|A|^4\big)\,\gamma^s\,d\mu
\notag\\&\qquad
  + c(\cg)^6(1 + [(\cg)^4\mu_\gamma(f)]^\frac12)(1 + \vn{A}_{4,[\gamma>0]}^2)
    \vn{A}_{4,[\gamma>0]}^2
\,.
\label{EQcombined6}
\end{align}
The second term in \eqref{EQcombined5} is estimated as follows:
\begin{align*}
 c\int_M |\nabla_{(3)}A||\nabla A|^3\,\gamma^s\,d\mu
&\le
 c\bigg(
    \int_M |\nabla_{(3)}A|^4\,\gamma^{2s}\,d\mu
  \bigg)^\frac14
  \bigg(
    \int_M |\nabla A|^4\,\gamma^{\frac{2s}{3}}\,d\mu
  \bigg)^\frac34
\\
&\le
 \theta\bigg(
    \int_M |\nabla_{(3)}A|^4\,\gamma^{2s}\,d\mu
  \bigg)^\frac12
 + c\bigg(
    \int_M |\nabla A|^4\,\gamma^{\frac{2s}{3}}\,d\mu
    \bigg)^\frac32
\,.
\end{align*}
The first term will be estimated below, it is also useful in controlling the
highest order term involving $\nabla_{(3)}A$.
For the second, we calculate
\begin{align*}
 \bigg(
    \int_M |\nabla A|^4\,\gamma^{\frac{2s}{3}}\,d\mu
 \bigg)^\frac32
&\le \bigg(
       \int_M |\nabla_{(2)}A||\nabla A|^2|A|\,\gamma^\frac{2s}{3}\,d\mu
     + (\cg)\int_M |\nabla A|^3|A|\,\gamma^\frac{2s-3}{3}\,d\mu
     \bigg)^\frac32
\\
&\le \theta
     \bigg(
       \int_M |\nabla_{(2)}A|^2|\nabla A|^2\,\gamma^s\,d\mu
     \bigg)^\frac34
     \bigg(
       \int_M |\nabla A|^2|A|^2\,\gamma^\frac{s}{3}\,d\mu
     \bigg)^\frac34
\\&\qquad
     + (\cg)^\frac32\bigg(
      \vn{A}_{4,[\gamma>0]}
       \bigg[
        \int_M |\nabla A|^4\,\gamma^{\frac{8s-12}{9}}\,d\mu
       \bigg]^\frac34
     \bigg)^\frac32
\\
&\le 
 \frac12\bigg(
    \int_M |\nabla A|^4\,\gamma^{\frac{2s}{3}}\,d\mu
 \bigg)^\frac32
  + \theta
       \int_M |\nabla_{(2)}A|^2|\nabla A|^2\,\gamma^s\,d\mu
\\&\qquad
  + \bigg(
       \int_M |\nabla A|^2|A|^2\,\gamma^\frac{s}{3}\,d\mu
     \bigg)^3
     + (\cg)^6\vn{A}_{4,[\gamma>0]}^6
\\
&\le 
 \frac12\bigg(
    \int_M |\nabla A|^4\,\gamma^{\frac{2s}{3}}\,d\mu
 \bigg)^\frac32
  + \theta
       \int_M |\nabla_{(2)}A|^2|\nabla A|^2\,\gamma^s\,d\mu
\\&\qquad
  + \vn{A}_{4,[\gamma>0]}^6
     \bigg(
       \int_M |\nabla A|^4\,\gamma^\frac{2s}{3}\,d\mu
     \bigg)^\frac32
     + (\cg)^6\vn{A}_{4,[\gamma>0]}^6
\end{align*}
Note that in the above we used $\frac{8s-12}{9} \ge \frac{2s}{3}$.
Absorbing yields
\begin{align*}
 \bigg(
    \int_M |\nabla A|^4\,\gamma^{\frac{2s}{3}}\,d\mu
 \bigg)^\frac32
&\le 
    \theta
       \int_M |\nabla_{(2)}A|^2|\nabla A|^2\,\gamma^s\,d\mu
\\&\qquad
  + \vn{A}_{4,[\gamma>0]}^6
     \bigg(
       \int_M |\nabla A|^4\,\gamma^\frac{2s}{3}\,d\mu
     \bigg)^\frac32
     + (\cg)^6\vn{A}_{4,[\gamma>0]}^6
\,.
\end{align*}
This gives the following estimate for the second term in \eqref{EQcombined5}:
\begin{align*}
 c\int_M &|\nabla_{(3)}A||\nabla A|^3\,\gamma^s\,d\mu
  + \bigg(
    \int_M |\nabla A|^4\,\gamma^{\frac{2s}{3}}\,d\mu
 \bigg)^\frac32
\\
&\le
 \theta\bigg(
    \int_M |\nabla_{(3)}A|^4\,\gamma^{2s}\,d\mu
  \bigg)^\frac12
  +  \theta
       \int_M |\nabla_{(2)}A|^2|\nabla A|^2\,\gamma^s\,d\mu
\\&\qquad
  + \vn{A}_{4,[\gamma>0]}^6
     \bigg(
       \int_M |\nabla A|^4\,\gamma^\frac{2s}{3}\,d\mu
     \bigg)^\frac32
     + (\cg)^6\vn{A}_{4,[\gamma>0]}^6
\,.
\end{align*}

Combining the second order estimates \eqref{EQcombined41}, \eqref{EQcombined5},
\eqref{EQcombined6} together, and absorbing, we have the following partial estimate:
\begin{align}
\int_M &\big(|\nabla_{(2)}A|^2|A|^4 + |\nabla_{(2)}A|^2|\nabla A|^2\big)\,\gamma^s\,d\mu
  + \bigg(
    \int_M |\nabla A|^4\,\gamma^{\frac{2s}{3}}\,d\mu
 \bigg)^\frac32
\notag\\
 &\le 
           (\theta + c\vn{A}_{4,[\gamma>0]}^4)\int_M
                 \big(
             |\nabla_{(4)}A|^2
           + |\nabla_{(2)}A|^2|\nabla A|^2
           + |\nabla_{(2)}A|^2|A|^4
                 \big)
                 \gamma^s\,d\mu
\notag\\&\quad
  + \theta\bigg(
    \int_M |\nabla_{(3)}A|^4\,\gamma^{2s}\,d\mu
     \bigg)^\frac12
  + \vn{A}_{4,[\gamma>0]}^6
     \bigg(
       \int_M |\nabla A|^4\,\gamma^\frac{2s}{3}\,d\mu
     \bigg)^\frac32
\notag\\&\quad
     + c(\cg)^{6}(1 + [(\cg)^4\mu_\gamma(f)]^\frac12)(1 +  \vn{A}_{4,[\gamma>0]}^2)\vn{A}_{4,[\gamma>0]}^2
\,.
\label{EQcombined7}
\end{align}
Let us now turn to controlling the highest order term.
We first show the following estimate, which is also needed for the terms involving $\nabla_{(2)}A$ above:
\begin{align*}
\int_M |\nabla_{(3)}A|^4\,\gamma^{2s}\,d\mu
&\le c\bigg(
	  \int_M |\nabla_{(4)}A|\,|\nabla_{(3)}A|^2\,\gamma^{\frac{3s}{2}}\,d\mu
	+ \int_M |\nabla_{(3)}A|^3|A|\,\gamma^{\frac{3s}{2}}\,d\mu
	+ (\cg)\int_M |\nabla_{(3)}A|^3\,\gamma^{\frac{3s-2}{2}}\,d\mu
      \bigg)^\frac{4}{3}
\\
&\le c\bigg(
	  \bigg[
		\int_M |\nabla_{(4)}A|^2\,\gamma^{s}\,d\mu
	  \bigg]^\frac12
	  \bigg[
		\int_M |\nabla_{(3)}A|^4\,\gamma^{2s}\,d\mu
	  \bigg]^\frac12
\\&\qquad
	+ \vn{A}_{4,[\gamma>0]}\bigg[\int_M |\nabla_{(3)}A|^4\,\gamma^{2s}\,d\mu\bigg]^\frac34
	+ (\cg)\int_M |\nabla_{(3)}A|^3\,\gamma^{\frac{3s-2}{2}}\,d\mu
      \bigg)^\frac{4}{3}
\\
&\le c
	  \bigg[
		\int_M |\nabla_{(4)}A|^2\,\gamma^{s}\,d\mu
	  \bigg]^\frac23
	  \bigg[
		\int_M |\nabla_{(3)}A|^4\,\gamma^{2s}\,d\mu
	  \bigg]^\frac23
\\&\qquad
	+ c\vn{A}_{4,[\gamma>0]}^\frac43\int_M |\nabla_{(3)}A|^4\,\gamma^{2s}\,d\mu
	+ c(\cg)^\frac43\bigg(\int_M |\nabla_{(3)}A|^3\,\gamma^{\frac{3s-2}{2}}\,d\mu\bigg)^\frac{4}{3}
\\
&\le c
	  \bigg[
		\int_M |\nabla_{(4)}A|^2\,\gamma^{s}\,d\mu
	  \bigg]^2
	+ (\theta + c\vn{A}_{4,[\gamma>0]}^\frac43)\int_M |\nabla_{(3)}A|^4\,\gamma^{2s}\,d\mu
\\&\qquad
	+ c(\cg)^\frac43
	  \bigg[
		\int_M |\nabla_{(3)}A|^4\,\gamma^{2s}\,d\mu
	  \bigg]^\frac23
	  \bigg[
		\int_M |\nabla_{(3)}A|^2\,\gamma^{s-4}\,d\mu
	  \bigg]^\frac23
\\
&\le c
	  \bigg[
		\int_M |\nabla_{(4)}A|^2\,\gamma^{s}\,d\mu
	  \bigg]^2
	+ (\theta + c\vn{A}_{4,[\gamma>0]}^\frac43)\int_M |\nabla_{(3)}A|^4\,\gamma^{2s}\,d\mu
\\&\qquad
	+ c(\cg)^4
	  \bigg[
		\int_M |\nabla_{(3)}A|^2\,\gamma^{s-4}\,d\mu
	  \bigg]^2
\,.
\end{align*}
Absorbing yields the estimate
\begin{align}
\int_M |\nabla_{(3)}A|^4\,\gamma^{2s}\,d\mu
&\le c
	  \bigg[
		\int_M |\nabla_{(4)}A|^2\,\gamma^{s}\,d\mu
	  \bigg]^2
	+ c\vn{A}_{4,[\gamma>0]}^\frac43\int_M |\nabla_{(3)}A|^4\,\gamma^{2s}\,d\mu
\notag\\&\qquad
	+ c(\cg)^4
	  \bigg[
		\int_M |\nabla_{(3)}A|^2\,\gamma^{s-4}\,d\mu
	  \bigg]^2
\,.
\label{EQcombined2}
\end{align}
Estimate \eqref{EQinducto} implies
\begin{align*}
(\cg)^2\int_M |\nabla_{(3)}A|^2\,\gamma^{s-4}\,d\mu
 &\le c\int_M |\nabla_{(4)}A|^2\,\gamma^{s}\,d\mu
 + c(\cg)^8\int_M |A|^2\,\gamma^{s-16}\,d\mu
\\
 &\le c\int_M |\nabla_{(4)}A|^2\,\gamma^{s}\,d\mu
 + c(\cg)^8\mu_\gamma(f)^\frac12\vn{A}_{4,[\gamma>0]}^2
\,.
\end{align*}
Combining this with \eqref{EQcombined2} we find
\begin{align}
\int_M |\nabla_{(3)}A|^4\,\gamma^{2s}\,d\mu
&\le c
	  \bigg[
		\int_M |\nabla_{(4)}A|^2\,\gamma^{s}\,d\mu
	  \bigg]^2
	+ c\vn{A}_{4,[\gamma>0]}^\frac43\int_M |\nabla_{(3)}A|^4\,\gamma^{2s}\,d\mu
\notag\\&\qquad\qquad
 + c(\cg)^{12}[(\cg)^4\mu_\gamma(f)]\vn{A}_{4,[\gamma>0]}^4
\notag\\
&\le c
	  \bigg[
		\int_M |\nabla_{(4)}A|^2\,\gamma^{s}\,d\mu
	+ c\vn{A}_{4,[\gamma>0]}^\frac23\bigg(\int_M |\nabla_{(3)}A|^4\,\gamma^{2s}\,d\mu\bigg)^\frac12
\notag\\&\qquad\qquad
 + c(\cg)^{6}[(\cg)^4\mu_\gamma(f)]^\frac12\vn{A}_{4,[\gamma>0]}^2
\bigg]^2
\,.
\label{EQcombined3}
\end{align}
We apply the auxilliary estimate \eqref{EQcombined3} to control the following
\begin{align*}
\int_M &|\nabla_{(3)}A|^2|A|^2\,\gamma^s\,d\mu
	+ \vn{A}_{4,[\gamma>0]}^2\int_M |\nabla_{(3)}A|^4\,\gamma^{2s}\,d\mu
 \le 2\vn{A}_{4,[\gamma>0]}^2
	\bigg(\int_M |\nabla_{(3)}A|^4\,\gamma^{2s}\,d\mu\bigg)^\frac12
\\
&\le c\vn{A}_{4,[\gamma>0]}^2
		\int_M |\nabla_{(4)}A|^2\,\gamma^{s}\,d\mu
	+ c\vn{A}_{4,[\gamma>0]}^\frac83\int_M |\nabla_{(3)}A|^4\,\gamma^{2s}\,d\mu
 + c(\cg)^{6}[(\cg)^4\mu_\gamma(f)]^\frac12\vn{A}_{4,[\gamma>0]}^4
\,.
\end{align*}
Combining the above with \eqref{EQcombined3}, and the lower order estimates
\eqref{EQcombinedx}, \eqref{EQcombined7}, and interpolating some terms, we finally conclude
\begin{align*}
\int_M &\big(
          |\nabla_{(3)}A|^2|A|^2 + |\nabla_{(2)}A|^2|A|^4 + |\nabla_{(2)}A|^2|\nabla A|^2
        + |\nabla A|^2|A|^6 + |A|^{10}\big)\gamma^s\,d\mu
\\&
  + \bigg(
    \int_M |\nabla A|^4\,\gamma^{\frac{2s}{3}}\,d\mu
 \bigg)^\frac32
	+ \vn{A}_{4,[\gamma>0]}^2\int_M |\nabla_{(3)}A|^4\,\gamma^{2s}\,d\mu
\\&\le
           (\theta + c\vn{A}_{4,[\gamma>0]}^4)\int_M
                 \big(
             |\nabla_{(4)}A|^2
           + |\nabla_{(2)}A|^2|\nabla A|^2
           + |\nabla_{(2)}A|^2|A|^4
	   + |\nabla A|^2|A|^6 + |A|^{10}
                 \big)
                 \gamma^s\,d\mu
\notag\\&\quad
  + \vn{A}_{4,[\gamma>0]}^6
     \bigg(
       \int_M |\nabla A|^4\,\gamma^\frac{2s}{3}\,d\mu
     \bigg)^\frac32
	+ c(\theta + \vn{A}_{4,[\gamma>0]}^\frac23)\vn{A}_{4,[\gamma>0]}^2\int_M |\nabla_{(3)}A|^4\,\gamma^{2s}\,d\mu
\notag\\&\quad
  + c(\cg)^6\vn{A}_{4,[\gamma>0]}^2
            \big(1+[(\cg)^4\mu_\gamma(f)]^\frac12
            \big)
            \big(1 + \vn{A}_{4,[\gamma>0]}^2
	    \big)
\,.
\end{align*}

Finally let us consider (v).
The estimate \eqref{EQnew1} has already been proved, it is the intermediate estimate \eqref{EQinter}.
For \eqref{EQnew2}, we note first that the equality \eqref{EQibpuse} implies the estimate
\begin{align*}
\int_M |\nabla A|^4\,\gamma^s\,d\mu
&\le
c\int_M |\nabla_{(2)}A|\,|\nabla A|^2\,|A|\,\gamma^s\,d\mu
+ c(\cg)\int_M 
        |\nabla_{(2)}A|\,|\nabla A|\,|A|^2
                            \,\gamma^{s-1}\,d\mu
			    \\
&\le
\frac14\int_M |\nabla A|^4\,\gamma^s\,d\mu
+ c\int_M |\nabla_{(2)}A|^2|A|^2\,\gamma^s\,d\mu
+ c(\cg)^2\int_M 
        |\nabla A|^2\,|A|^2
                            \,\gamma^{s-2}\,d\mu
			    \\
&\le
\frac12\int_M |\nabla A|^4\,\gamma^s\,d\mu
+ c\int_M |\nabla_{(2)}A|^2|A|^2\,\gamma^s\,d\mu
+ c(\cg)^4\int_M |A|^4 \,\gamma^{s-4}\,d\mu
\,.
\end{align*}
The final estimate \eqref{EQnew2} follows by absorption.
\end{proof}

\section*{Acknowledgements}

During the completion of this work the third author was supported by the ARC
Discovery Project grant DP120100097 and DP150100375 at the University of
Wollongong. Part of this work was completed during IMIA sponsored visits of the
first author at the University of Wollongong. Part of this work was completed
during visits of supported by the School of Mathematical Sciences of the second
and third authors.
We are grateful for this support.

We would also like to thank the anonymous referee for reading the manuscript
carefully and giving valuable feedback that has resulted in improvements made
to the article.

\bibliographystyle{plain}
\bibliography{mbib}

\end{document}